\newcommand{\C}{\mathbb{C}}
\newcommand{\R}{\mathbb{R}}
\newcommand{\Q}{\mathbb{Q}}
\newcommand{\Z}{\mathbb{Z}}
\newcommand{\GL}{\mathrm{GL}}
\newcommand{\G}{\Gamma}
\newcommand{\F}{\mathbb{F}}
\newcommand{\Comp}{\mathcal{K}}
\newcommand{\LL}{\mathcal{L}}
\newcommand{\hi}{C(\bd \G)\rtimes_r \G}
\newcommand{\Hilm}{\mathcal{E}}
\newcommand{\Mult}{\mathcal{M}}
\newcommand{\Bound}{\LL}
\newcommand{\Calkin}{\mathcal{Q}}
\newcommand{\Poincare}{\mathcal{P}}
\newcommand{\K}{\mathrm{K}}
\newcommand{\KK}{\mathrm{KK}}
\newcommand{\EG}{\mathcal{E}\G}
\newcommand{\PD}{\textup{PD}}
\newcommand{\RKK}{\textup{RKK}}
\newcommand{\ev}{\textup{ev}}
\newcommand{\cM}{\tilde{M}}
\newcommand{\ez}{o_{z_0}}
\newcommand{\ezx}{\mathrm{i}_{z_0, X}}
\newcommand{\inflate}{\textup{inflate}}
\newcommand{\Dirac}{\mathcal{L}}
\newcommand{\fund}{[\widehat{\G\ltimes X}]}
\newcommand{\dol}{\overline{\partial}}
\newcommand{\Index}{\mathrm{Index}}
\newcommand{\FM}{\mathcal{F}_d}
\newcommand{\T}{\mathbb{T}}
\newcommand{\pnt}{\cdot}
\newcommand{\cZ}{\overline{Z}}
\newcommand{\bZ}{\partial Z}
\newcommand{\D}{\mathbb{D}} 
\newcommand{\Hyp}{\mathbb{H}} 
\newcommand{\Grd}{\mathcal{G}}
\newcommand{\bd}{\partial}
\newcommand{\pr}{\textup{pr}}
\newcommand{\id}{\mathrm{id}}
\numberwithin{equation}{section}
\numberwithin{equation}{section}
\theoremstyle{theorem}
\newtheorem{theorem}[equation]{Theorem}
\newtheorem{lemma}[equation]{Lemma}
\newtheorem{proposition}[equation]{Proposition}
\newtheorem{corollary}[equation]{Corollary}
\theoremstyle{definition}
\newtheorem{definition}[equation]{Definition}
\theoremstyle{remark}
\newtheorem{remark}[equation]{Remark}
\newtheorem{example}[equation]{Example}
\begin{document}
\title[The class of a fibre in Noncommutative Geometry ]{The class of a fibre in Noncommutative Geometry}


\author{Heath Emerson}
\email{hemerson@math.uvic.ca}
\address{Department of Mathematics and Statistics\\
  University of Victoria\\
  PO BOX 3045 STN CSC\\
  Victoria, B.C.\\
  Canada V8W 3P4}

\keywords{K-theory, K-homology, equivariant KK-theory, Baum-Connes conjecture, Noncommutative Geometry}

\date{\today}

\thanks{This research was supported by an NSERC Discovery grant. }

\begin{abstract}
This note addresses the K-homology of a C*-algebra 
crossed product of a discrete group acting 
smoothly on a manifold, with the goal of better understanding its noncommutative geometry. 
The
 Baum-Connes apparatus is the main tool. 
Examples suggest that the correct notion of the `Dirac class' of such a noncommutative 
space is the image under the equivalence determined by Baum-Connes of the fibre of the 
canonical fibration of the Borel space associated to the action, and a smooth model for the 
classifying space of the group. We give a systematic study of such fibre, or `Dirac classes,'
with applications to the construction of interesting spectral triples, and computation of their 
K-theory functionals, and we prove in particular that both the well-known deformation of the 
Dolbeault operator on the noncommutative torus, and the class of the boundary extension of 
a hyperbolic group, are both Dirac classes in this sense and therefore can be treated 
topologically in the same way. 

\end{abstract}

\maketitle

\tableofcontents

\section{Introduction}

The purpose of this article is to use the Baum-Connes apparatus to shed some 
light on the noncommutative geometry of some examples of C*-algebras that
probably deserve to be thought of as
 `noncommutative manifolds,' since they are 
canonically \(\KK\)-equivalent to classical manifolds.

We do this by fixing a definition of a class in the \(\K\)-homology of a crossed-product 
\(C_0(X)\rtimes \G\) of a smooth action by a discrete group 
 which we call the \emph{Dirac class}
of the action, and which is determined by the \(\K\)-homology class of a fibre 
in the natural fibration \( p\colon E\G\times_\G X \to B\G\) and the Dirac map involved
in the Baum-Connes apparatus.  This makes the Dirac class dependent on not only 
the action, but on aspects of the group itself including, in a certain sense, its
 coarse geometry. Our class differs from the transverse Dirac class studied by 
 A. Connes and others, which is, roughly speaking,
  invariant under the whole diffeomorphism
 group of the manifold, and doesn't really involve the group as such. 

We use this set-up to prove that 
 the boundary extension of a classical hyperbolic group acting on its sphere at 
 infinity, 
 and the deformed Dolbeault spectral triple over the irrational rotation algebra 
 of Connes, 
are, at the level of \(\K\)-homology, instances of the 
same same construction: they are each Dirac classes for the respective 
 actions. 
 
 We also deduce an index theorem for Dirac classes, which computes the 
 \(\K\)-theory functional determined by a Dirac class, in terms of 
 topological data (the intersection index of a Baum-Douglas  
 cycle with a fibre.) When specialized to either 
 the boundary 
 extension of a hyperbolic group, where it computes the boundary map on 
 \(\K\)-theory, or the irrational rotation situation, the resulting index formulas 
seem quite promising.

We now explain of all of this in more detail.

The Baum-Connes conjecture \cite{BCH} seeks to reduce the analytic problem of computing 
the \(\K\)-theory groups of a crossed-product \(C_0(X)\rtimes \G\) to topology (by and 
large we work with the max crossed-product in this paper, as it is functorial, and 
since most of the actions we consider specifically are amenable.) In the work of 
Meyer and Nest, following a tradition initiated by Kasparov, Lusztig, Higson
and others, 
it is shown that to \(\G\) one can associate a proper \(\G\)-C*-algebra 
\(\mathcal{P}\) and a Kasparov morphism \(D \in \KK_*^\G(\mathcal{P}, \C)\) (the 
`Dirac morphism') with the property that the forgetful map 
\( \KK^\G \to \KK^H\) maps \(D\) to an equivalence, for any finite subgroup \(H\) of \(\G\). 
This condition determines \(D\). External product in \(\KK^\G\) gives a map 
\begin{equation}
\label{equation:intro_dirac_map}
 \KK^\G_*(A, B) \to \KK^\G_*(\mathcal{P}\otimes A, B)
 \end{equation}
for any \(\G\)-C*-algebras \(A,B\), and the co-domain of this map is of a purely 
topological nature because it is isomorphic to \(\RKK^\G_*(\EG; A, B)\) by 
an important Poincar\'e duality theorem of Kasparov \cite{Kas}.

The C*-algebra \(\mathcal{P}\) and the morphism \(D\)  are not always easy to 
represent concretely. In this paper, we assume that \(\EG\) can be modelled by a 
smooth, co-compact, equivariantly \(\K\)-oriented manifold \(Z\). This covers the 
case of \(\Z^d\) actions, and most actions of discrete subgroups of semisimple 
Lie groups. The hypothesis implies that we can take \(\mathcal{P}:= C_0(Z)\) and 
\(D := [Z] \in \KK_{-d}^\G(C_0(Z), \C)\) the class of the \(\G\)-equivariant 
Dirac operator on \(Z\). 
The assumptions are satisfied by, for example, 
fundamental groups \(\G = \pi_1(M)\) of compact, oriented, aspherical, 
spin\(^c\)-manifolds, taking \(Z := \tilde{M}\) with its lifted 
\(\G\)-equivariant \(\K\)-orientation. 

The geometric content of the Dirac map starts to appear if one 
puts  \(A =B = \C\) and \(\G = \Z^d\). The domain of the Dirac map is 
\(\KK_*^{\Z^d}(\C, \C) \cong \KK_*(C(\widehat{T^d}), \C) \) and the 
co-domain is \(\KK_*^{\Z^d}(C_0(\R^d), \C) \cong \KK_*(C_0(\R^d/\Z^d), \C) 
= \KK_*(C(T^d), \C)\) where \(T^d := \R^d/\Z^d\) and \(\widehat{T^d}\) is 
by definition \(\widehat{\Z^d}\), the `dual' torus. 
 The Dirac map therefore is a map 
\begin{equation}
 \K_*(\widehat{T^d}) \to \K_{*+d}(T^d),
 \end{equation}
and, it is not that difficult to compute that it is precisely the well-known 
\emph{Fourier-Muai transform}, 
implemented by composing cohomology cycles 
with the smooth correspondence
\[ T^d \leftarrow (T^d\times \widehat{T^d}, \beta) \to \widehat{T^d},\]
from \(T^d\) to the dual torus \(\widehat{T^d}\), 
where \(\beta\) is the Mischenko-Poincar\'e element, and the 
maps are the projection maps. Furthermore, as we show, it has 
an interesting effect on Baum-Douglas \(\K\)-homology, for it 
interchanges (the \(\K\)-homology class of) 
 a \(j\)-dimensional subtorus in \(\widehat{T^d}\) to (the class of a)
certain canonical \(d-j\)-dimensional `dual' torus in \(T^d\). It was 
this observation that first made the author want to study the Dirac 
map more closely, and especially for actions.

If \(A = C_0(X)\), for a \(\G\)-space \(X\), and \(B = \C\),  the Dirac map looks like 

\[[Z]\otimes_\C \cdot :  \KK^\G_*(C_0(X), \C) \to \KK^\G_{*-d} (C_0(Z\times X), \C)\]
and the domain is the \(\K\)-homology of the crossed-product \(C_0(X)\rtimes \G\), 
while the co-domain, if \(\G\) is torsion-free, is naturally isomorphic to 
\(\K_{d- *  }(Z\times_\G X)\) -- the \(\K\)-homology of the `Borel space' 
\(Z\times_\G X\), which fibres over \(\G\backslash Z \cong B\G\), with fibre \(X\). 
The Dirac map shifts degrees by \(-d\). 

We define a \emph{Dirac class} for groups with torsion, but in the torsion-free case, 
the Dirac class is any class in \(\K^{d-n} (C_0(X)\rtimes \G)\) mapped by the Dirac 
map to the Baum-Douglas \(\K\)-homology class of the fibre \(X\): a spin\(^c\)-manifold 
mapping (properly) to \(Z\times_\G X\), by including it as a fibre. 

This definition does not guarantee that a Dirac class exists, nor that it is unique, because 
the localization map is neither onto nor \(1\)-\(1\) in general. However, if \(\G\) has a dual-Dirac 
morphism, then the Dirac map can be split, yielding a existence result about Dirac 
classes (although still not uniqueness). It is this method which, when applied to isometric 
actions of nice discrete groups \(\G\) (like \(\Z^d\)), leads to spectral triple representations of the Dirac class by 
spectral triples over \(C_0(X)\rtimes \G\), whose general format, of the
Schr\"odinger kind, \(D+\delta\), with \(\delta\) an operator on the group \(\G\), 
\(D\) the Dirac operator on \(X\), are somewhat similar to the ones appearing
 \cite{Haw} (for \(\G = \Z^d\)).  When \(\Z\) acts by irrational 
rotation on the circle, the Dirac class is represented by
 the famous spectral triple (the deformed Dolbeault 
operator \(\dol_\theta\) over \(A_\theta\) first defined by 
A. Connes (see \cite{Connes}). 

Actions of discrete (co-compact) 
groups of M\"obius transformations \(\G\subset \mathrm{SL}_2(\R)\) on the 
circle \(\T\) are smooth actions preserving a \(\K\)-orientation; they are 
special cases of broader classes of hyperbolic groups acting on their boundaries. 
These examples cannot be treated like isometric actions as in the previous 
paragraph, by \(\Z^d\): one cannot form an external product 
of the type \(D+\delta\) 
as in the previous paragraph because there is no \(\G\)-invariant Dirac operator 
\(D\) on the circle (because there is no \(\G\)-invariant probability measure) with which 
one can take external product with. 

It turns out that the 
the Dirac class of such an action can be represented by completely orthogonal 
methods: probabilistic ones concerning the action of \(\G\) on probability 
measures on its boundary (see \cite{EN}). These imply that
 the regular representation of \(C(\partial \G)\rtimes \G\) on 
\(L^2\bigl( \G, L^2(\partial \G , \mu)\bigr)\), together with the orthogonal 
projection \(P_{l^2(\G)}\) onto the subspace \(l^2(\G)\) of functions constant 
on the boundary, make up a Fredholm module representing the 
\emph{boundary 
extension class}:  the 
class in \(\KK^1(C(\partial \G) \rtimes \G, \C)\) of the boundary extension 
\begin{equation}
\label{sdfsdfsdfs}
 0 \to C_0(\G)\rtimes \G \to C(\overline{\G})\rtimes \G \to C(\partial \G)\rtimes \G\to 0
 \end{equation}
with \(\overline{\G}\) the compactification of \(\G\) obtained by mapping it in 
as an orbit in the disk \(\D\), and compactifying in the closed disk \(\overline{\D}\). 
The Fredholm module described above has finite summability 
the Hausdorff dimension of \( (\partial \G, \mu)\). This appears a step 
forward in understand the noncommutative geometry of these (Type III) 
examples, but it is only, in a sense, the noncommutative \emph{conformal} geometry that 
is being understood. Whether one can integrate a noncommutative notion of 
\emph{length} (or distance) into these examples remains unknown.

We prove here only that the Dirac class is the boundary extension class. The fact 
of being a Dirac class has, in any case, various topological consequences: it allows 
a completely topological description of its pairing with \(\K\)-theory. 

In general, what the Dirac class detects, topologically, is a certain intersection number. 
If one has a 
Baum-Douglas cycle (or cocyle) for \(Z\times_\G X\), a higher index construction
produces a \(\K\)-theory class for \(C_0(X)\rtimes \G\) which pairs with the 
Dirac class to give a certain analytic index. The index theorem is that this 
analytic index is the topological intersection number of the Baum-Douglas 
cycle (or cocyle) with the fibre \(X\subset Z\times_\G X\). The irrational 
rotation algebra is already an interesting example. In Connes' work it is 
shown that \(\dol_\theta\) can be extended, by constructing a connection, and 
so on,  to act on 
sections of various `noncommutative vector bundles' over \(A_\theta\) -- that is, 
f.g.p. modules \(\mathcal{E}_{p,q}\). These
bundles are parameterized by pairs of relatively prime integers and are higher 
indices of the \(1\)-dimensional Baum-Douglas cocycles for the ordinary 
torus given by loops \(L_{p,q}\); the content of our index theorem here is that 
the index of the operator \(D_\theta\) extended to act on \(L^2(\mathcal{E}_{p,q})\) 
is the topological 
intersection number of the loop with the standard meridian loop of the torus (that is, 
\(q\), in this parameterization.) 

The boundary extension class of a hyperbolic group, 
due to work of the author and Ralf Meyer, is torsion of 
order \(\chi (\G)\) if 
 \(\G\) is torsion-free, and \(\chi (\G) \not= 0\), and it is non-torsion, nonzero, if 
 \(\chi (\G) = 0\). The intersection index computes the boundary map 
 \[ \delta \colon \K_1( C(\partial \G)\rtimes \G) \to \K_0(C_0(\G)\rtimes \G) = \K_0(\C) = \Z,\]
and one of the author's initial interests was in finding \(\K\)-theory classes 
in \(\K_1(C(\partial \G)\rtimes \G)\) 
in the case \(\chi (\G) = 0\)  (\emph{e.g.} for Kleinian groups 
with nonzero pairing with the boundary extension. We 
give in fact a direct geometric construction of such a \(\K\)-theory class, 
based on a non-vanishing vector field on \(Z\backslash \G\), using the 
intersection 
 index formula to compute it's image under \(\delta\). We also use our framework 
 and an argument with \(\Z/k\)-manifolds to explain the torsion of the boundary 
 extension class, in general.

I would like to thank Paul Baum, for all his his boundless enthusiasm for 
the subject of Dirac operators and \(\K\)-homology has taught me.
I would also like to thank Nigel Higson, for 
several very pertinent remarks, and the referee, for the number of 
 suggestions, 
whose adoption has 
greatly improved the layout and content of this article. 

\section{The Dirac-localization map for K-oriented groups }
\label{section:bcloc}

If \(\G\) is a locally compact group acting smoothly on a smooth Riemannian manifold 
\(X\), then a \(\G\)-equivariant \(\K\)-orientation on \(X\) consists of
\begin{itemize}
\item[a)] A \(\G\)-invariant Riemannian metric on \(X\). 
\item[b)] A \(\G\)-equivariant complex vector bundle \(S\to X\) (the spinor bundle), equipped with a 
\(\G\)-invariant Hermitian metric, and, if \(n\) is even, a \(\G\)-invariant
 \(\Z/2\)-grading on \(S\). 
 \item[c)]  A \(\G\)-equivariant fibrewise irreducible representation of the Clifford algebra 
 bundle of \(X\), on \(S\) compatible with the \(\Z/2\)-gradings if relevant.  
 
\end{itemize}

Assuming 
that the action is proper, one can then construct a \(\G\)-invariant connection on \(X\) compatible with the Levi-Civita connection on \(TX\), and corresponding \(\G\)-equivariant 
Dirac operator on \(X\), providing a cycle and corresponding class in 
\( \KK_{-n}^\G (C_0(X), \C)\). The class is canonically associated to the \(\K\)-oriented) 
\(\G\)-manifold \(X\), and we will denote it \([X]\). We will call \([X]\) the 
\emph{transverse Dirac class} of \(X\), the reason for the word `transverse' and the 
exact definition given below. A good source for the construction of analytic 
Dirac cycles is \cite{BHS}. Another good source involving equivariant 
Dirac operators is the seminal paper \cite{BCH}.

If \(X\) is a smooth, but not 
necessarily proper \(\G\)-manifold, it is more difficult to directly construct a 
Dirac class of the above type, since \(\G\) may not even preserve any 
Riemannian metric on \(X\). But there are several ways of arguing that there 
still exists a \emph{class} in \(\KK_{-n}^\G(C_0(X), \C)\) playing the role 
of the Dirac class, even though a representative \emph{cycle} is more 
difficult to describe. 

For example, if \(\G = \Z\) is the integers, then 
one of the foundational results of \(\KK\)-theory is that there is a 
\(\KK\)-equivalence between \(C_0(X)\rtimes \Z\) and the C*-algebra of 
continuous functions on the mapping cylinder 
\(C_0(\R\times_\Z X)\), shifting degrees by \(+1\). With the appropriate 
orientation hypothesis on \(X\), the mapping cylinder is \(\K\)-orientable with 
associated (non-equivariant) Dirac class 
\[ [\R\times_\Z X]\in \KK_{-n-1} (C_0(\R\times_\Z X), \C)\]
and we can uniquely define a class \([X]\in \KK_{-n } (C_0(X)\rtimes \Z, \C)\) 
by the requirement that  
the \(\KK\)-equivalence 
\[\KK_{-n } (C_0(X)\rtimes \Z, \C) \to \KK_{-n-1} (C_0(\R\times_\Z X), \C)\]
alluded to above, 
maps \([X]\) to \([\R\times_\Z X]\).

Recently, these ideas are often understood in terms of the 
set-up of Meyer 
and Nest, which abstracts the earlier work of 
Connes, Baum, Kasparov, Higson and many others, and interprets
 the \(\KK\)-equivalence 
just discussed as a \emph{localization map}.

Localizing a category at a collection of morphisms inverts the 
morphisms. Meyer and Nest consider
 the category \(\KK^\G\) of \(\G\)-C*-algebras, where \(\G\) is a 
locally compact group, and localize it at the \emph{weak equivalences}, where 
\[ f\in \KK^\G(A, B)\]
is a weak equivalence if, for every compact subgroup \(H \subset \G\), the 
restriction map 
\[ \KK^\G \to \KK^H\]
maps \(f\) to an equivalence.  They show that to any \(\G\) can be associated 
a unique \(\G\)-C*-algebra \(\mathcal{P}\) and morphism 
\[ D\in \KK^\G(\mathcal{P}, \C)\]
such that the localization of \(\KK^\G\) at the weak equivalences has 
morphisms between objects \(A\) and \(B\) the elements of 
\( \KK^\G(\mathcal{P}\otimes A, B)\)
and the localization map from \(\KK^\G\) to its localization, identifies with 
the map on morphisms given by Kasparov product 
\[ \KK^\G(A, B) \xrightarrow{D\otimes_\C \, \cdot \,} \KK^\G(\mathcal{P}\otimes A, B).\]
In most applications, \(\mathcal{P}\) is a proper \(\G\)-C*-algebra. 
In this paper, we will be working with instances of \(G\) where 
\(\mathcal{P}\) is represented by a very specific cycle that of the 
Dirac operator. This means that the more important of our definitions 
(like of Dirac class) depend on this structure, and do not apparently 
make much sense for more general groups, although the map we are 
going to discuss is a special case of the more general Meyer-Nest localization map.

We are going to be working 
with discrete groups  
\(\G\) for which \(\mathcal{P}\) can be realized as a smooth, 
proper, \(\G\)-equivariantly \(\K\)-oriented \(\G\)-manifold \(Z\). For such 
\(Z\) we can directly construct an analytic cycle and Dirac class 
\[ [Z]\in \KK_{-d} (C_0(Z), \C)\]
which is equal to \(D\) if \(d\) is even, and a suspension of \(D\) otherwise. 
We will then be able to describe the localization map in very concrete 
geometric terms. 

The exact hypotheses on the discrete group \(\G\) we will be using 
are the following.

\begin{definition}
\label{definition:smooth_oriented_group}
A \emph{\(\K\)-orientation} on the discrete group \(\G\) will refer to
 a smooth, proper \(\G\)-equivariantly
\(\K\)-oriented co-compact 
\(\G\)-compact manifold \(Z\) which is \(H\)-equivariantly 
contractible for every compact subgroup \(H\) of \(\G\). 
We refer to the pair \( (\G, Z)\) as a smooth \(\K\)-oriented group.
We let 
\[ [Z]\in \KK_{-d} (C_0(Z), \C)\]
be the class of the associated \(\G\)-equivariant
 Dirac operator on \(Z\).  We call it the \emph{transverse Dirac class of \(Z\)}. 

\end{definition}

The contractibility assumption means that \(Z\) is a model for the classifying 
space \(\EG\) for proper actions of \(\G\), that \(Z\) can be identified with the 
localizing object \(\mathcal{P}\) of Meyer and Nest, and that \([Z] \) is the 
Dirac morphism. 

\begin{example}
\label{example:examples_of_smooth_oriented_groups}
Every compact group admits a smooth \(\K\)-orientation with \(Z\) a point.

The group \(\Z^d\) admits a smooth \(\K\)-orientation using \(Z:= \R^d\) with 
the smooth action of \(\R^d\) by translation; since \(\Z^d\) is a closed subgroup 
of \(\R^d\), \((\Z^d, \R^d)\) is a smooth \(\K\)-oriented group. 

Suppose that 
 \(M\) is a compact Riemann surface (a compact two-dimensional manifold 
equipped with a complex structure). Then it admits a canonical orientation. 
The universal cover \(Z:= \tilde{M}\) has a free and proper action of \(\G := 
\pi_1(M)\), and can be equipped with a \(\G\)-invariant metric and 
orientation (lifted 
from \(M\), \emph{i.e.} a complex structure) and metric of constant negative 
curvature, making it contractible, and more generally, \(H\)-equivariantly contractible 
for any compact group of isometries of \(Z\). 

The Riemannian manifold \(Z\) can of course be identified with
 the hyperbolic plane \(\Hyp^2\) with an appropriate 
proper, isometric action of \(\G\). 

Thus,  \( (\G, \Hyp^2)\) is a smooth, oriented group.

Similarly, any orientation-preserving co-compact 
discrete group of hyperbolic isometries of 
\(\Hyp^3\) admits the structure of a smooth \(\K\)-oriented group, 
since any compact oriented \(3\)-manifold 
also carries a \(\K\)-orientation. 

More generally, if \(\G\) is a torsion-free 
uniform lattice in a semi-simple 
Lie group with associated symmetric space \(Z = G/K\), then \(\K\)-orientability of 
\(\G\backslash X\) implies \(\G\)-equivariant \(\K\)-orientability of \( Z\), and 
hence \( (\G, Z)\) admits a canonical structure of a smooth oriented group in 
this case, from a \(\K\)-orientation on \(\G \backslash Z\).

This amounts to the well-known
 procedure of `lifting' a \(\K\)-orientation 
 under a covering map.

 \end{example}

\begin{definition}
\label{definition:Dirac_map}
Let \( (\G,Z)\) be a smooth oriented group. The \emph{Dirac-localization map}
is the map  
\[ \Dirac: \KK^\G_*(A, B) \xrightarrow{\otimes_\C [Z]} \KK^\G_*(C_0(Z)\otimes A, B)\]
induced by external product in \(\KK^\G\), 
with the class \([Z]\) of the transverse Dirac class for \(\G\) acting on \(Z\).
\end{definition}
We will generally just use the term `localization map.'

\begin{remark}
 Any model for the Dirac morphism of \cite{Meyer-Nest:BC} 
determines its own 
corresponding`localization map', as we have already discussed above, 
localization in this sense makes sense for general locally compact groups, 
without further assumptions on their classifying spaces. But in 
this article, we are 
interested in doing computations. These computations are most easily 
done when the Dirac morphism has the simple geometric model that it has under 
our assumptions. For example, dropping the equivariant 
\(\K\)-orientation assumption on \(Z\) forces one to use a different 
co-domain for the localization map: \(C_0(Z)\) must be replaced by 
\(C_\tau (X)\), the algebra of sections of the Clifford algebra bundle of \(Z\), 
which is noncommutative, and, furthermore, \(\Z/2\)-graded, or 
by \(C_0(TZ)\), with \(TZ\) the tangent bundle, which is no longer 
\(\G\)-compact, and has various other disadvantages from a computational point of view. 
\end{remark}

\begin{remark}
If \(B\) is a trivial \(\G\)-C*-algebra, and \(A\) is an arbitrary \(\G\)-C*-algebra, 
recalling that \(\G\) is \emph{discrete}, 
there is a completely canonical isomorphism 
\[ \KK^\G_*(A, B)\cong \KK_*(A\rtimes \G, B),\]
due to the standard bijection between *-homomorphisms with domain 
a crossed-product \(A\rtimes G\), and covariant pairs. 

Taking this into account, the Dirac-localization map for \(B=\C\) 
can be considered as a map 
\begin{multline*}
\label{equation:Dirac_map}
 \K^*(A\rtimes \G) := \KK_*(A\rtimes \G, \C) \cong \KK^\G_{*}(A, \C) \rightarrow 
\KK^\G_{*-d}(C_0(Z)\otimes A, \C)\\ \cong \K^{*-n}( C_0(Z, A)\rtimes \G)
\end{multline*}
for any \(\G\)-C*-algebra \(A\). 
If \(\G\) is torsion-free, \(A = C_0(X)\), some \(\G\)-space \(X\), then 
 the target of Dirac-localization is \(\KK_*(C_0(X\times_\G Z), \C) = \K_{-*}(Z\times_\G X)\). 
 Using 
 a standard Morita equivalence one can identify this with
  the \(\K\)-homology 
of the Borel space \(X\times_\G Z\), which 
fibres over \(\G\backslash Z\) under the second projection map, with fibre \(X\).

\end{remark}

For a compact group, the Dirac-localization map is the identity map.

\subsection{Factorization of the localization map}
We close this section with a review of an important factorization of the 
localization map. 

Let \((\G, Z)\) be a smooth oriented group. 
Then a result going back to Kasparov 
shows that there is a Poincar\'e duality isomorphism 
\begin{equation}
\label{equation:PD}
\KK^\G_*(C_0(Z)\otimes A, B) \cong  \RKK^\G_*(Z; A, B)
\end{equation}
shifting degrees by \(d\). See \cite{EM:Dualities}. 
The group  \(\RKK^\G_*(Z; A, B)\), explained in \cite{Kas}, 
 is identical, by the definitions, 
to the groupoid-equivariant group 
\(\KK_{*+d}^{\Grd_\G}(C_0(Z)\otimes A, 
C_0(Z)\otimes B \bigr)\) defined by LeGall; this point of view is 
convenient, because \(\KK^\Grd\) is equivalent to the 
category of \(\Grd\)-equivariant correspondences, by \cite{EM:Geometric_KK}, 
for proper groupoids \(\Grd\).

The way 
the Poincar\'e duality map 
\[\PD:  \RKK^\G_*(Z; A, B) \to \KK^\G_{*-d} (C_0(Z)\otimes A, B)\]
is defined is as follows. It is 
the composition of the map 
\[\RKK^\G_*(Z; A, B) \to \KK^\G_* (C_0(Z)\otimes A , C_0(Z)\otimes B)\]
which forgets the \(\G\ltimes Z\) equivariance on a cycle, remembering only 
\(\G\)-equivariance,
which we denote by \(f\mapsto \overline{f}\), and the map 
\begin{equation}
\KK^\G_* (C_0(Z)\otimes A , C_0(Z)\otimes B)
\\  \xrightarrow{\otimes_{C_0(Z)} [Z] } 
 \KK^\G_{*-d}(C_0(Z)\otimes A, B).
 \end{equation}
of composition with the Dirac class \([Z]\in \KK_{-d}^\G(C_0(Z), \C)\). 

The main ingredient of the Poincar\'e duality 
isomorphism is thus the class 
\([Z]\in \KK^\G_{-d} (C_0(Z), \C)\) of the \(\G\)-equivariant Dirac 
operator on \(Z\) -- the transverse Dirac class, in our terminology, for \(\G\) acting 
on \(Z\).

The map \(\PD^{-1}\) inverse to \(\PD\) described above, 
is defined using the map, which we call the \emph{inflation map}, of 
Kasparov: 
\begin{equation}
\label{equation:inflate}
\inflate\colon \KK^\G_*(C_0(X), \C) \to 
 \RKK^\G_*(Z; C_0(X), \C).
 \end{equation}
In these terms, 
\begin{equation}
\label{equation:pd1}
\PD^{-1}(f)  = \Theta \otimes_{C_0(Z)} \inflate (f),\end{equation}
where
\[ \Theta \in \RKK^\G_{d}(Z; \C, C_0(Z)) \cong \KK^{\Grd_\G}_{+d}(C_0(Z), C_0(Z\times Z)),\]
is the 
class of the \(\Grd_\G\)-equivariant correspondence 
\[ Z \xleftarrow{\id} Z \xrightarrow{\delta} Z\times Z,\]
where the momentum map for the \(\Grd\)-space \(Z\times Z\) is 
in the first variable, and 
\(\delta_Z\colon Z \to Z\times Z\) be the diagonal map.

Correspondences and their associated
 \(\KK\)-morphisms are discussed 
in Section \ref{subsec:topcorr}. 
 A cycle
representing \(\Theta \in 
\RKK^\G_{+d}(Z; \C, C_0(Z) )\) is built by constructing a family 
of Bott cycles \(\Theta_z\) at 
\(z\in Z\). Such a Bott cycle is defined as follows. 
In each small Riemannian ball \(B_z\) around \(z\), use the 
Clifford multiplication and a vector vector on \(B_z\) pointing towards \(z\), to 
 construct a multiplier of the module of spinors over the 
ball and \(\KK_{+d}(\C, C_0(B_z))\) cycle and then a \(\KK_{+d}(\C, C_0(Z))\) 
cycle by the map induced by the open inclusion \(B_z\subset Z\). 

The required cycle for 
\(\RKK^\G_{+d}(Z;  \C, C_0(Z))\) is then 
built from considering \(\Theta\) as an operator on the 
Hilbert module of sections of the field. See \cite{Kas}.

See Section \ref{subsection:inflationstuff} for more on the inflation map 
and Poincar\'e duality.

\begin{proposition}
\label{proposition:factoring_Dirac}
Let \((\G, Z)\) be a smooth \(\K\)-oriented \(d\)-dimensional group. Then the 
localization map factors as 
\begin{equation}
\label{equation:factoring_the_dirac_map}
 \xymatrix{ \KK^\G_*(A,B) \ar[dr]_{\Dirac} \ar[r]^{\inflate} & \RKK^\G_*(Z; A,B)\ar[d]^{\PD}\\ & 
 \KK^\G_{*+d}(C_0(Z)\otimes A, B)}
 \end{equation}
where \(\PD\) is Poincar\'e duality. 
\end{proposition}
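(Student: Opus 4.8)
The plan is to unwind the definitions of both composites in the triangle~\eqref{equation:factoring_the_dirac_map} and exhibit them as the same element of $\KK^\G_{*+d}(C_0(Z)\otimes A, B)$, using the associativity of Kasparov product as the only serious tool. By the formula~\eqref{equation:pd1} for $\PD^{-1}$ together with the definition of $\PD$ as the composition ``forget $\G\ltimes Z$-equivariance, then compose with $[Z]$,'' the right-hand-then-down route sends $f \in \KK^\G_*(A,B)$ to
\[
\overline{\inflate(f)} \otimes_{C_0(Z)} [Z] \;\in\; \KK^\G_{*+d}(C_0(Z)\otimes A, B),
\]
where $\overline{\inflate(f)} \in \KK^\G_*(C_0(Z)\otimes A, C_0(Z)\otimes B)$ is the image of $\inflate(f)$ under the functor forgetting $Z$-equivariance. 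So the proposition reduces to the identity
\[
[Z] \otimes_\C f \;=\; \overline{\inflate(f)} \otimes_{C_0(Z)} [Z]
\]
in $\KK^\G_{*+d}(C_0(Z)\otimes A, B)$, i.e.\ to a compatibility between exterior product with $[Z]$ on the left and composition with $[Z]$ on the right, mediated by inflation.

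First I would make the inflation map explicit. The map $\inflate\colon \KK^\G_*(A,B) \to \RKK^\G_*(Z;A,B)$ is, concretely, exterior product with the identity correspondence on $Z$: under the identification $\RKK^\G_*(Z;A,B) \cong \KK^{\Grd_\G}_*(C_0(Z)\otimes A, C_0(Z)\otimes B)$, it sends $f$ to $1_{C_0(Z)} \otimes_\C f$ (viewed $Z$-equivariantly via the diagonal action of the groupoid $\Grd_\G = \G\ltimes Z$). Forgetting the $Z$-equivariance, $\overline{\inflate(f)}$ is literally $1_{C_0(Z)} \otimes_\C f \in \KK^\G_*(C_0(Z)\otimes A, C_0(Z)\otimes B)$ — exterior product over $\C$ of the identity on $C_0(Z)$ with $f$. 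This is the key simplification: inflation followed by the forgetful map is just ``tensoring $f$ with $C_0(Z)$ on the nose.'' I would state this as a lemma (or cite the relevant normalization from \cite{Kas} or \cite{EM:Dualities}), being slightly careful about the degree conventions when $d$ is odd and $[Z]$ is a suspension of $D$.

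Granting this, both sides of the displayed identity become manifestly equal by associativity and commutativity of the Kasparov product with respect to exterior and interior multiplication. Indeed,
\[
\overline{\inflate(f)} \otimes_{C_0(Z)} [Z]
= \bigl(1_{C_0(Z)} \otimes_\C f\bigr) \otimes_{C_0(Z)} [Z]
= \bigl(1_{C_0(Z)} \otimes_{C_0(Z)} [Z]\bigr) \otimes_\C f
= [Z] \otimes_\C f = \Dirac(f),
\]
where the middle step is the standard interchange law for $\otimes_\C$ and $\otimes_{C_0(Z)}$ (here $[Z] \in \KK^\G_{-d}(C_0(Z),\C)$ is being used in two roles: as a $C_0(Z)$-module map producing the $\otimes_{C_0(Z)}$, and as the coefficient in the exterior product). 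I expect the main obstacle to be purely bookkeeping rather than conceptual: getting the forgetful functors, the groupoid-versus-group equivariance identifications of \eqref{equation:PD}, and the degree shifts to line up so that ``$1_{C_0(Z)} \otimes_{C_0(Z)} [Z] = [Z]$'' holds on the nose with the correct sign and grading, and checking that the interchange law is being applied in a setting (equivariant $\KK$, possibly with a suspension) where it is licensed. Once the normalization lemma for $\overline{\inflate(f)}$ is pinned down, the rest is one line of Kasparov-product formalism.
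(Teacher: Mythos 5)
Your argument is correct, and it is worth noting that the paper itself does not prove this proposition at all: it simply cites Theorem 4.34 of \cite{EM:Dualities}, where the factorization is established in the more abstract framework of Kasparov duals (with \(\PD\) built from the class \(\Theta\) and the duality axioms). With the concrete definition of \(\PD\) adopted in this paper --- forget the \(\G\ltimes Z\)-equivariance, then take \(\otimes_{C_0(Z)}[Z]\) --- your direct unwinding is exactly what is needed, and its two inputs are both sound: (a) the forgetful map applied to \(\inflate(f)\) is \(\tau_{C_0(Z)}(f) = 1_{C_0(Z)}\otimes_\C f\), which is immediate from Kasparov's definition of \(\sigma_Z\) on cycles (tensor the module, representation and operator by \(C_0(Z)\)), and (b) the identity \(\bigl(1_{C_0(Z)}\otimes_\C f\bigr)\otimes_{C_0(Z)\otimes B}\bigl([Z]\otimes 1_B\bigr) = [Z]\otimes_\C f\), which is the standard expression of an external Kasparov product as a composition of the two ``one-sided'' products, valid verbatim in \(\KK^\G\). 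What your route buys is a self-contained one-line proof that makes clear the factorization is purely formal and does not use that \(\PD\) is an isomorphism (that is where the real content of Kasparov's duality theorem sits); what the citation buys the paper is uniformity with the general duality machinery of \cite{EM:Dualities}, where \(\PD\) is not defined by the forget-and-cap recipe but has to be compared with it. The only loose ends you flag --- the flip \(C_0(Z)\otimes A \cong A\otimes C_0(Z)\) implicit in Definition \ref{definition:Dirac_map} and the possible Koszul sign when both \(d\) and the degree of \(f\) are odd, together with the paper's inconsistent \(\pm d\) degree labels --- are genuinely just bookkeeping and do not affect the argument.
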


For the proof, see Theorem 4.34 of \cite{EM:Dualities}.

Since \(\PD\) is always an isomorphism, the inflation map and the Dirac 
map are equivalent; thus one is an isomorphism if and only if the other is.

If \(\G\) is torsion-free, \( (\G, Z)\) a smooth \(\K\)-oriented group, 
then \(\G\) acts freely on \(Z\), and the commutative 
diagram \eqref{equation:factoring_the_dirac_map} becomes
\begin{equation}
\label{equation:factoring_the_dirac_map_torsion_free_special}
 \xymatrix{ \K^*(C^*\G) \ar[dr]_{\Dirac} \ar[r]^{\inflate} & \K^{*} (\G \backslash Z) \ar[d]^{\PD}\\ & 
 \K_{*-d}(\G \backslash Z)}.
 \end{equation}
with \(\PD\) Poincar\'e duality for the \(\K\)-oriented manifold \(\G\backslash Z\).

The Dirac-localization map for torsion-free \(\K\)-oriented groups is therefore
is between the \(\K\)-homology of the 
C*-algebra \(C^*(\G)\), and the \(\K\)-homology of the classifying space 
\(\G\backslash Z \cong B\G\).

\section{The localization map for free abelian groups}

In this section we describe Dirac-localization for free abelian groups \(\Z^d\) 
acting on points: that is, we describe the localization map 
\[ \Dirac \colon \KK^{\Z^d}_*(\C, \C) \to \KK^{\Z^d}_{*-d}(C_0(\R^d), \C).\]
in geometric terms -- it turns out to be essentially a direct 
\(\K\)-theory version of the Fourier-Mukai 
transform of algebraic geometry.

 \subsection{Topological correspondences}
\label{subsec:topcorr}
A (sometimes called `topological'\footnote{just to distinguish them from C*-correspondences, for example}) correspondence, is a specification of a certain set of 
geometric data which produces a class in \(\KK\). The concept is due 
to \cite{Connes-Skandalis}. A closely related concept plays an important role in 
algebraic geometry, in which context, correspondences 
they are sometimes referred to 
as Fourier-Mukai transforms; they are morphisms in a suitable category between 
projective varieties. 

 For purposes of \(\K\)-theory, if 
  \(X\) and \(Y\) are smooth manifolds, a 
correspondence from \(X\) to \(Y\) 
is a pair of maps and a \(\K\)-theory class, 
usually depicted by a diagram
\begin{equation}
\label{equation:correpsondence}
X \xleftarrow{b} (M, \xi) \xrightarrow{f} Y,\end{equation}
where \(f\) is a smooth \(\K\)-oriented normally non-singular map; \(b\) is an ordinary smooth map
(not necessarily proper), and 
the class \(\xi \) lies in the representable \(\K\)-theory of \(M\) with \(b\)-compact 
support. If \(b\) is proper, this is just the ordinary \(\K\)-theory of \(M\). 
The theory of 
correspondences is due to Connes and Skandalis \cite{Connes-Skandalis}. 
Connes and Skandalis associate to such a correspondence a certain
 \emph{analytic} cycle for \(\KK\)-theory in the following way. We will assume that 
 the \(\K\)-theory class is in dimension zero and is specified by a 
 vector bundle \(E \to M\) over \(M\). The most delicate part of the construction involves 
 map \(f\), to which one wishes to associate an analytically defined morphism 
 \[ f_{\textup{an}}!\in \KK_{\dim Y - \dim M} (C_0(M), C_0(Y)).\]
 
  If \(f\) is a \emph{submersion}, then \(f\) gives 
 rise to a bundle of smooth manifolds over \(Y\) with fibre the fibres of \(f\), 
the 
 \(\K\)-orientation assumption on \(f\) implies a bundle of \(\K\)-orientations on the fibres, 
 and 
 by a well-known procedure one can then construct from this data a bundle of 
 Dirac operators on the fibres of \(f\). This gives
  a cycle and analytically defined 
 class \(D_f\) we we set 
 \[ f_{\textup{an}}! := D_f\in \KK_{\dim Y - \dim M} (C_0(M), C_0(Y))\] 
 If \(E\) is a vector bundle over \(M\), \(D_f\) can be twisted by the vector bundle, 
 giving a twisted version \(D_{f, E}\) of \(D_f\) in the same group. Finally, 
 \(b\) induces a map \(b^*\colon \KK_*(C_0(M) , C_0(Y))\to \KK_*(C_0(X), C_0(Y))\), and 
 now the analytic class in 
 \(\KK\) defined by the correspondence is by definition 
 \[ b^*(D_{f,E})\in \KK_{\dim Y - \dim M} (C_0(M), C_0(Y))\].

  If \(f\) is merely assumed a smooth map, then it can be factored into a submersion and 
  an immersion, and an analytically defined morphism 
  \[ f_{\textup{an}}!\in \KK_{\dim Y - \dim M}  (C_0(M), C_0(Y))  \]
  is defined by composing the two Kasparov morphisms obtained from 
  the factorization: the submersion determines an element of \(\KK\) as just described; 
and a \(\K\)-oriented immersion 
  \( f\colon M \to Y\) defines a \(\KK\)-morphism in the following way. 
  The immersion 
  has a \(\K\)-oriented normal bundle \(\nu\), with a tubular neighbourhood 
  embedding 
  \[ \varphi \colon \nu \to C_0(Y),\]
 onto an open subset of \(Y\). Combining the Thom isomorphism class 
 \[ \xi_\nu \in \KK_{\dim Y -\dim M} (C_0(M), C_0(\nu))\]
  and the open embedding 
  \[ \varphi ! \in \KK_0(C_0(\nu), C_0(Y))\]
  gives a purely topologically-defined morphism in 
  \(\KK_{\dim M - \dim Y} (C_0(M), C_0(Y))\)
  associated to the immersion. 
  
  There is another way of building a natural \(\KK\)-element 
  from the data \eqref{equation:correpsondence}. Part of the 
  recipe above was in fact purely topological: if the map \(f\) was 
  an immersion, then \(f!\) is defined purely in terms of Thom modification 
  in \(\K\)-theory (by the normal bundle of \(f\).) The idea behind the 
  topological index (of an elliptic operator) of Atiyah and Singer was 
  as follows. Since we are only really concerned about submersions, let 
  \[ f\colon M \to Y,\]
  with \(\K\)-oriented fibres. We have defined \(f_{\textup{an}}!\) above 
  using the bundle of Dirac operators along the fibres of \(f\). Instead, 
  let 
 \( \zeta \colon M \to \R^n\) be a smooth embedding, for some \(n\). Then 
 \[ M \to Y\times \R^n, \;\;\; \tilde{f} (x) = \bigl( \zeta (x), f(x)\bigr)\]
 is a smooth embedding. We obtain by the procedure above an element 
 \[ \tilde{f}!\in \KK_{\dim Y -\dim M+n}(C_0(M), C_0(Y\times \R^n)\bigr) \cong  
 \KK_{\dim Y -\dim M}(C_0(M), C_0(Y)\bigr).\]
  where the second equality is by the Bott Periodicity 
  \(\KK\)-equivalence \(C_0(\R^n) \cong \C\), 
  which shifts degrees by \(-n\). If \(f !\) is defined as \(\tilde{f}\) then
   the Atiyah-Singer Index Theorem in \(\KK\)-theory is the 
  statement that 
  \[ f_{\textup{an}}!=f!\]
  for any smooth \(\K\)-oriented map \(f\).

The most important feature of correspondences is that they can be composed
in a purely geometric manner: the composition 
\[ X \xleftarrow{b} M \xrightarrow{f} Y \xleftarrow{b'} M' \xrightarrow{f'} Z\]
if the 
maps \(f\) and \(b'\) are 
transverse, is represented by the correspondence 
\[ X \leftarrow M\times_Y M' \rightarrow Z,\]
with \(M\times_Y M'\) having its canonical 
smooth manifold structure, and where the map \(M\times_Y M'  \to Z \) (the composition of the projection \(M\times_Y M\to M'\) and the map 
\(f'\colon M'\to Z\)) carries a certain \(\K\)-orientation induced by the \(\K\)-orientations on
 \(f\) and \(f'\). The left map \(M\times_Y M'\to X\) is similarly the composition of the 
 first coordinate projection and the map \( b\). It is easy to integrate the \(\K\)-theory 
 data into this recipe.

 This efficient recipe of composing correspondences (\(\KK\)-elements) will be 
 used later in this article.

The \emph{dimension} of the correspondence \eqref{equation:correpsondence} 
is \( \dim Y - \dim M + \mathrm{deg}\, \xi\). If \(\xi\) is the class of the 
trivial line bundle, so that all the information in the correspondence lies in the
maps and the \(\K\)-orientations, 
then the dimension is \(\dim Y - \dim M\). With 
this notion of dimension, correspondences composed by the transversality recipe 
described above, do so 
additively with respect to dimension.

\subsection{Localization and Fourier-Mukai duality}
We now describe a correspondence which encodes the localization map. The 
main ingredient will be the \(\K\)-theory class 
 \(\Poincare_d \in \K^0(T^d\times \widehat{T^d})\) of the 
 Poincar\'e bundle, defined to be the 
  class of the f.g.p. module over \(C(T^d\times \widehat{T^d})\) consisting of all 
 continuous functions 
 \(f \) on \(\R^d\times \widehat{T^d}\) such that 
 \begin{equation}
 \label{equation:sdfsdfsdfgjdkfjg}
  f( x+v, \chi) = \chi (v) f(x),\;\;\; x\in \R^d, \;\;\chi \in \widehat{T^d} := \widehat{\Z^d}.
  \end{equation}
 The bimodule structure over \(C(T^d\times \widehat{T^d})\) is given by 
 \[ (f\cdot h) (x,\chi) = f(x,\chi) h(x, \chi),\]
 where \(T^d\) is understood as \(\R^d/\Z^d\) so \(h\) in this formula is to be interpreted as 
 a continuous 
 function on \(\R^d \times \widehat{T^d}\) which is \(\Z^d\)-periodic in the first 
 variable. 
 
  If \(\chi \colon \Z^d\to \T\) is a 
 character, it induces a complex line bundle \(L_\chi\) over \(\R^d/\Z^d = T^d\), 
 and this is precisely the restriction of \(\beta\) to a the slice 
 \(T^d\times \{\chi\}\cong T^d\) is  the induced vector bundle \(L_\chi\). The 
 total family of these bundles makes up the space of the Poincar\'e bundle. 
 More exactly, the space 
 \( \R\times \widehat{\T} \times \C/\, \sim \) where \( (x-n , \chi, \lambda) \sim 
 (x, \chi, \chi (n)\lambda)\), which projects to \(\T\times \widehat{\T}\) with 
 fibres \(\C\), forms a rank-one complex vector bundle over \(T^d\times  \widehat{T^d}\) 
 whose module of sections is as described above.

The Poincar\'e bundle 
 is the Fourier transform of a 
 finitely generated projective module over \(C(T^d)\otimes C^*(\Z^d)\), which 
 is defined for general discrete groups \(\G\), and in this more general context 
 called the Mischenko bundle, figuring in a common formulation of the 
 Baum-Connes assembly map involving only non-equivariant \(\KK\)-theory 
 and not the equivariant flavour. 
 
 \begin{definition}
 \label{definition:fmcorresnodence}
The \emph{Fourier-Mukai correspondence} is given by the 
topological correspondence of degree \(-d\) 
 \begin{equation}
\label {equation:aodsifasldfjas;ldkf1}
T^d\xleftarrow{\pr_1} ( T^d\times \widehat{T^d}, \Poincare)\xrightarrow{\pr_2} \widehat{T^d},
 \end{equation}
 from \(T^d\) to \(\widehat{T^d}\). 
 The coordinate projection is given its standard \(\K\)-orientations. 
\end{definition}

\begin{remark}
\label{remark:whowantsananaylticcylceme!}
The 
class in \(\KK_{-d}\bigl(C(T^d), C(\widehat{T^d})\bigr) \) of the Fourier-Mukai 
correspondence, given our general remarks earlier on wrong-way maps from 
submersions, is the class of the following analytically defined cycle.

 To each \(\chi \in \widehat{T^d}\), we associate the 
flat Hermitian induced 
vector bundle \(L_\chi:= \R^d\times_{\Z, \chi} \C\) over \(T^d\) and form 
the corresponding twisted Dirac operator \(D_\chi\); then the
 ensemble 
\(\{D_\chi \}_{\chi \in \widehat{T^d}}\) makes up a bundle of elliptic 
operators along the fibres of the coordinate projection 
\(T^d\times \widehat{T^d}\to \widehat{T^d}\), and a cycle for 
for \(\KK_{-d}\bigl(C(T^d), C(\widehat{T^d})\bigr)\).

\end{remark}

\begin{theorem} 
\label{theorem:dirac_is_fourier_mukai}
Under the identification 
\[\KK_{-d}(C_0(\R^d)\rtimes \Z^d, C^*(\Z^d)\bigr) \cong \KK_{-d}(C(T^d)) , C(\widehat{T^d})\bigr)\]
by Morita equivalence in the first variable, Fourier transform in the second, 
the image \(j_{\Z^d} ([\R^d]\) of the Dirac class of \( (\Z^d, \R^d)\) 
under descent, 
is the class of the Fourier-Mukai correspondence 
\eqref {equation:aodsifasldfjas;ldkf1}. 

In particular, the  Dirac map \( \K_*(\widehat{T^d})   \to \K_{*+d}(T^d)\) for \(\Z^d\) acting on a 
point is the Fourier-Mukai transform, given by the map on Baum-Douglas cycles 
of composition 
with the smooth correspondence 
\begin{equation}
\label{equation:aodsifasldfjas;ldkf}
T^d\xleftarrow{\pr_1} ( T^d\times \widehat{T^d}, \Poincare_d)\xrightarrow{\pr_2} \widehat{T^d},
\end{equation}
where \(\Poincare_d\) is the (class of the) Poincar\'e bundle. 
\end{theorem}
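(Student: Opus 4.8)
The plan is to identify the Dirac-localization map for $(\Z^d,\R^d)$ concretely and then recognize the resulting analytic cycle as that of the Fourier--Mukai correspondence described in Remark~\ref{remark:whowantsananaylticcylceme!}. First I would unwind the identifications: by descent, $j_{\Z^d}([\R^d])$ lives in $\KK_{-d}(C_0(\R^d)\rtimes\Z^d, C^*(\Z^d))$; Morita equivalence $C_0(\R^d)\rtimes\Z^d\sim C(T^d)$ (via the free proper action, giving $C_0(\R^d)\rtimes\Z^d\cong \Comp(\ell^2\Z^d)\otimes C(T^d)$ up to stabilization) puts us in $\KK_{-d}(C(T^d),C^*(\Z^d))$, and Fourier transform $C^*(\Z^d)\cong C(\widehat{T^d})$ lands us in $\KK_{-d}(C(T^d),C(\widehat{T^d}))$. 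The key point is that descent of the external product with $[\R^d]$, followed by these Morita/Fourier identifications, turns $[\R^d]$ into a $C(\widehat{T^d})$-linear family of twisted Dirac operators on $T^d$, one for each character $\chi$, twisted by the flat line bundle $L_\chi=\R^d\times_{\Z^d,\chi}\C$.

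The cleanest route to see this is to use the factorization of the localization map through Poincar\'e duality (Proposition~\ref{proposition:factoring_Dirac}): for $\G=\Z^d$, $A=B=\C$, the localization map is $\PD\circ\inflate$, where $\inflate\colon \KK^{\Z^d}_*(\C,\C)\to\RKK^{\Z^d}_*(\R^d;\C,\C)$ and $\PD$ involves the transverse Dirac class $[\R^d]\in\KK_{-d}^{\Z^d}(C_0(\R^d),\C)$. Since $\Z^d$ acts freely and properly on $\R^d$ with quotient $T^d$, the $\RKK$ group reduces to $\KK^*(T^d)$ via descent, and $\inflate$ becomes (after Fourier transform in the domain) the map $\K^*(\widehat{T^d})\to\K^*(T^d)$ given by multiplication by the Poincar\'e bundle class $\Poincare_d$ pulled back appropriately --- this is exactly the left leg $\pr_1^*$ twisted by $\Poincare_d$ of the correspondence \eqref{equation:aodsifasldfjas;ldkf}. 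The $\PD$ step then supplies the wrong-way map $\pr_{2!}$ for the $\K$-oriented submersion $\pr_2\colon T^d\times\widehat{T^d}\to\widehat{T^d}$, whose analytic incarnation is precisely the bundle of Dirac operators along the $T^d$-fibres. Composing these two gives exactly the correspondence \eqref{equation:aodsifasldfjas;ldkf1}, so that $j_{\Z^d}([\R^d])$ is its class; the ``in particular'' statement is then immediate by functoriality of the pairing with $\K$-homology and the composition-of-correspondences recipe.

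Concretely I would verify the identification $\inflate \leftrightarrow (\pr_1^*(-\otimes\Poincare_d))$ as follows: the inflation map sends $f\in\KK^{\Z^d}_*(\C,\C)$ to its image under the (pullback along the projection) map $C_0(\R^d)\to\C$-inflation; under the Morita/Fourier dictionary $\KK^{\Z^d}(\C,\C)\cong\K^*(\widehat{T^d})$ and $\RKK^{\Z^d}(\R^d;\C,\C)\cong\KK^*(C(T^d),\C)\cong\K^*(T^d)$, and the inflation map is classically known (this is the Mischenko bundle / flat-bundle construction) to become cap product with the Poincar\'e/Mischenko line bundle. Here one uses the explicit description of $\Poincare_d$ given before Definition~\ref{definition:fmcorresnodence} as the family $\{L_\chi\}$. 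Then the $\PD$-isomorphism, by the definition recalled in the excerpt (forget $\G\ltimes Z$-equivariance, then compose with $[Z]=[\R^d]$), produces exactly the fibrewise twisted-Dirac cycle of Remark~\ref{remark:whowantsananaylticcylceme!}.

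The main obstacle I anticipate is the bookkeeping of the two identifications (Morita equivalence of the crossed product, and Fourier transform of the group C*-algebra) and checking that they intertwine the $\KK$-theoretic operations correctly --- in particular that descent of $\otimes_\C[\R^d]$ really corresponds, after these identifications, to composition with the Fourier--Mukai correspondence rather than some twisted variant (e.g.\ a sign or orientation discrepancy on $\pr_2$, or a conjugate of $\Poincare_d$). Degree tracking ($-d$ throughout) and the $\K$-orientation conventions on the coordinate projections need to be pinned down so that the composed correspondence has the standard $\K$-orientation claimed in Definition~\ref{definition:fmcorresnodence}. Everything else --- the reduction of $\RKK^{\Z^d}(\R^d;-,-)$ to $\KK^*(C(T^d),-)$, the identification of the inflated class with the Poincar\'e bundle, and the recognition of the $\PD$-image as the bundle-of-Dirac-operators cycle --- is either recalled in the excerpt or is a routine consequence of the composition-of-correspondences formalism.
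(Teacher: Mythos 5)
Your route is genuinely different from the paper's, and as written it has a gap of principle. The paper proves the first assertion by a direct cycle-level computation of descent: it identifies the right \(C^*(\Z^d)\)-module \(\mathcal{E}_{\Z^d}\otimes_{C_0(\R^d)\rtimes\Z^d} j_{\Z^d}\bigl(L^2(\R^d,S)\bigr)\) with the module of sections of the Mischenko--Poincar\'e module (functions \(\xi\colon\R^d\to C^*(\Z^d)\) with \(\xi(x+n)=[n]\xi(x)\)), observes that the equivariant Dirac operator descends to the fibrewise flat-twisted Dirac family, and only then deduces the ``in particular'' statement about the Dirac map. You instead factor the localization map as \(\PD\circ\inflate\) (Proposition \ref{proposition:factoring_Dirac}) and identify each factor. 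But that factorization is an identity of \emph{maps} on \(\K\)-homology, \(\K_*(\widehat{T^d})\to\K_{*+d}(T^d)\), whereas the theorem's first assertion is an identity of \emph{bivariant classes} in \(\KK_{-d}\bigl(C(T^d),C(\widehat{T^d})\bigr)\). Your concluding step ``so that \(j_{\Z^d}([\R^d])\) is its class'' does not follow formally: agreement of ``composition with \(j_{\Z^d}([\R^d])\)'' and ``composition with the Fourier--Mukai class'' on \(\K\)-homology does not by itself give equality of the two classes. It can be repaired here, but you must do it: either note that the Dirac map is composition with \(j_{\Z^d}([\R^d])\) after Morita/Fourier and then use that \(\K_*(C(T^d))\) and \(\K_*(C(\widehat{T^d}))\) are finitely generated free, so the UCT makes \(\KK_{-d}\bigl(C(T^d),C(\widehat{T^d})\bigr)\to\mathrm{Hom}\bigl(\K_*(\widehat{T^d}),\K_{*+d}(T^d)\bigr)\) injective; or run your two steps bivariantly, i.e.\ exhibit the classes implementing \(\inflate\) and \(\PD\) and compute their Kasparov product --- which in practice is the paper's module computation again.

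The second issue is that the step carrying all the weight --- that under \(\KK^{\Z^d}(\C,\C)\cong\K_*(\widehat{T^d})\) and \(\RKK^{\Z^d}(\R^d;\C,\C)\cong\K^*(T^d)\) the inflation map becomes the slant product with \(\Poincare_d\) --- is invoked as ``classically known'' rather than proved. Modulo the (standard) Poincar\'e duality step, this identification is essentially equivalent to the statement being proved; the paper itself remarks that the Fourier--Mukai description of the Dirac map is ``well known to experts'' but apparently unwritten, and its comment on Land's comparison of assembly maps indicates such identifications are not formalities. So a complete argument along your lines must either prove this slant-product description (which again comes down to recognizing the descended module as the Poincar\'e module, with the flat-twisted Dirac family as operator, including the \(\K\)-orientation/sign bookkeeping you flag) or cite it precisely; as it stands the proposal defers the substantive content of the theorem to this unproved input.
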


\begin{proof}

We start with reviewing the descent map, and work 
in the generality of a general \(\K\)-oriented group \(\G\) as above (though the 
orientation plays no role).

Cycles for \(\KK^\G_*(C_0(Z), C)\) are given by Hilbert spaces \(H\) equipped with a 
unitary action of \(\G\), a \(\G\)-equivariant representation 
\( \pi \colon C_0(Z) \to \Bound (H)\), 
and an operator, which is almost \(\G\)-equivariant in the appropriate sense. 
 Descent, 
applied to this data, setting aside the operator for the moment, 
produces the right \(C^*(\G)\)-module \(j_\G(H)\),
 which is the completion 
of \(C_c(\G, H)\), elements of which we write in group-algebra style 
\[ \sum_{g\in \G} \xi_g [g],\]
under the inner product 
\[ \langle \xi , \xi'\rangle_{C^*(\G)}  = \sum_{g_1, g_2\in \G}  \langle \xi_{g_1}, \xi'_{g_2}\rangle\, [g_1^{-1}g_2],\]
which of course can be re-written as a convolution. 
The right \(C^*(\G)\)-module structure on \(j_\G (H)\) is 
\[ \bigl( \sum_{g\in \G} \xi_g [g] \bigr) \cdot [h] =\sum_{g\in \G} \xi_g [gh].\]
The left action of \(C_0(Z)\rtimes \G\) is given by the covariant pair 
\[ f( \sum \xi_g [g]) = \sum \pi (f) \xi_g[g], \;\;\;\; h\bigl( \sum \xi_g[g]) = \sum h(\xi_g)[hg].\]

The standard Morita equivalence \( C(\G\backslash Z)\)-\(C_0(Z)\rtimes \G\)-bimodule 
Of Rieffel and others is the completion 
\(\mathcal{E}_\G\) of \(C_c(Z)\) under the inner product 
\[ \langle f_1, f_2\rangle_{C_0(X)\rtimes \G} = \sum_{g\in \G} f_1^*g^{-1}(f_2)\]
and the right module structure by the (anti-) covariant pair
\[ f \cdot [h] := h^{-1}(f), \;\;\; f\cdot f':= ff'.\]
In computing descent at the level of cycles, we therefore need to describe the right 
Hilbert \(C^*(\G)\)-module 
\begin{equation}
\label{equation:a_nasty_tensor_product}
 \mathcal{M}(H) := \mathcal{E}_\G\otimes_{C_0(Z)\rtimes \G} j_\G (H) .
 \end{equation}

We will focus on describing this tensor product module when \(H\) is of the form 
\(L^2(Z)\), for a \(\G\)-invariant measure on \(Z\). Exactly the same arguments 
go through to compute \(\mathcal{M}(H)\) when \(H = L^2(S)\) is the space of 
spinors for the Dirac operator on \(Z\). 

Let \(\otimes\) in the following denote the algebraic tensor product. We work with 
simple tensors in the 
tensor product \eqref{equation:a_nasty_tensor_product}, which we can denote  
\( f \otimes_{C_0(X)\rtimes \G} \xi [g]\), where \(f, \xi \in C_c(Z)\), and \(\xi\) thought of 
as an element of the Hilbert space \(L^2(Z)\). 
The null vectors are spanned by the \(C^*(\G)\)-invariant submodule 
spanned over \(\C\) by the 
vectors 
\begin{multline}
\label{equation:nullvectors}
f'f\otimes_{C_0(X)\rtimes \G} \xi[g] - f'\otimes_{C_0(X)\rtimes \G} f\xi[g],\;\;\;
h^{-1}(f)\otimes_{C_0(X)\rtimes \G} \xi[g] - f\otimes_{C_0(X)\rtimes \G} h(\xi)[hg].
\end{multline}
Using the first relation, one sees that 
we are describing the quotient of 
\( H\otimes \C\G\) by the span of the vectors 
\begin{equation}
 \label{equation:nullvectorsdf}
 h(\xi)\otimes [hg] -  \xi\otimes [g],
 \end{equation}
\emph{i.e.}, the tensor product module amounts to forming  the 
quotient 
module, by the diagonal left action of \(\G\) (by right \(C^*(\G)\)-module maps) 
 and then completing under the inner product 
 \begin{equation}
 \label{equation:theyayinnerproduct}
  \langle \xi_1 \otimes [g_1] , \xi_2 \otimes [g_2]\rangle_{C^*(\G)} := \langle \xi_1, \xi_2\rangle \cdot [g_1^{-1}g_2].
  \end{equation}

Now we will use the special structure of \(Z\). Let \(F\subset Z\) be a fundamental 
domain for the \(\G\)-action. 
We define a map, somewhat formally, on elementary tensors by 
\[ \Phi (\xi \otimes [g]) := \sum_{k\in \G} k(\xi|_F) \otimes [kg].\]
Here we start with \(\xi \in C_c(Z)\subset L^2(Z)\), restrict it to \(F\), and 
then move this restricted function on \(F\) periodically over \(Z\), giving the 
functions \(k(\xi|_F)\). 

We can take the co-domain of \(\xi\) to be the space of 
bounded, measurable maps 
\[ \xi \colon Z \to \C\G,\;\;\textup{s.t.} \;\; \xi (gx) = [g] \xi (x) \; \forall x\in Z.\]
With this interpretion, \(\Phi\) sends 
null-vectors \eqref{equation:nullvectorsdf} to zero. We define an 
inner product on such functions by 
\[ \langle \xi_1, \xi_2\rangle_{C^*(\G)}  := \int_F \xi_1^* \xi_2,\]
with \(\xi^* (x):= \xi(x)^* \in \C\G\).
 
Then for \(\xi_i \in L^2(Z)\), \(g_i \in \G\), 
\[ \langle \Phi (\xi_1 \otimes [g_1]) , \Phi (\xi_2\otimes [g_2]\rangle_{C^*(\G)}  = 
\int_F   (\xi_1 [g_1])^* \xi_2[g_2] = \int_F \overline{\xi_1}\xi_2  \;\cdot  [g_1^{-1}g_2]\in \C\G,\]
which matches \eqref{equation:theyayinnerproduct}, so that 
\(\Phi\) is an isometry.

To summarize, we have proved the following. 

\begin{lemma}
Descent and strong Morita equivalence maps the \(\G\ltimes Z\)-Hilbert space 
\(L^2(Z)\) to the right \(C^*(\G)\)-module of maps 
\(\xi \colon Z\to C^*(\G)\) such that \(\xi (gx) = [g] \xi (x)\) for all \(x\in Z\), with 
inner product, module structure 
\[ \langle \xi_1, \xi_2\rangle_{C^*(\G)} := \int_F \xi_1^*\xi_2 \; \;\;\;\;\;\xi [g] \, (x) := \xi (x) [g].\]

\end{lemma}

Very little change needs be made to the above argument when \(H\) is generalized to 
be \(L^2(S)\) for a spinor bundle \(S\) over \(Z\), equipped with a unitary 
\(\G\)-action by bundle maps. Furthermore, if \(D\) is a \(\G\)-equivariant Dirac operator 
on \(L^2(S)\), it descends to a densely defined operator on the right \(C^*(\G)\)-module 
\(j_\G \bigl( L^2(S)\bigr)\) by the obvious formula 
\[ \tilde{D}   (\sum \xi_g[g]) := \sum D\xi_g \, \otimes [g].\] 
The class \( ( j_\G\bigl( L^2(S)\bigr) , \tilde{D})\) 
of this spectral triple represents
 \(j_\G( [Z])\) in \(\KK_{-d} \bigl(C_0(Z)\rtimes \G, C^*(\G)\bigr)\). 
 The Kasparov product 
 \[ [\mathcal{E}]\otimes_{C_0(Z)\rtimes \G} \lambda_\G ([Z])\]
 in the case 
 \(Z = \R^d\), \(\F = \Z^d\) can now be described in the following way. 
 By our work above, the right \(C^*(\Z^d)\)-module 
 \(\mathcal{E}\otimes_{C_0(\R^d)\rtimes \Z^d} j_{\Z^d}\bigl(L^2(Z)\bigr)    \)  
 is isomorphic to the right \(C^*(\Z^d\)-module of maps 
\(\xi \colon \R^d\to C^*(\Z^d)\) such that \(\xi (x+n) =  [n] \xi(x), \;\; \forall x\in \R^d, \; n\in \Z^d,\)
with the \(C^*(\Z^d)\)-valued inner product defined above. 
Under Fourier transform \(C^*(\Z^d) \cong C(\widehat{T^d})\) these correspond to 
maps on \(T^d\times \widehat{T^d}\) satisfying the conditions described in 
 \eqref{equation:sdfsdfsdfgjdkfjg}. That is, 
 the right \(C(\widehat{T})\)-module we are describing is the module of 
 sections of the Miscenko-Poincar\'e module \(L\). A small elaboration of the 
 computations just given integrates a spinor bundle, and the discussion in 
 Remark \ref{remark:analyticclassoffm} shows that this space carries a 
 natural bundle of Dirac operators on it, because we can twist by flat connections. 
 The axioms of the Kasparov 
 product proves the result.

For the second statement, note that for 
any discrete group, the 
standard identification (at the level of cycles) \(\KK^\G_*(A, \C) \cong 
\KK_*(A\rtimes \G, \C)\) factors through the composition of the 
descent functor 
 and the map 
\( \epsilon_*\colon \KK_*\bigl(A\rtimes \G, C^*(\G)\bigr) \to \KK_*(A\rtimes \G, \C)\) 
induced by the trivial representation \(C^*(\G) \to \C\). 
It follows from a quick calculation that if \((\G, Z)\) is \(\K\)-oriented, then 
the Dirac map interpreted as a map  
\[ \KK_*(C^*(\G), \C) \to \KK_{*-d} (C_0(Z)\rtimes \G, \C)\]
is given by Kasparov composition with the image 
\( j_\G ([Z]) \in \KK_{*-d} (C_0(Z)\rtimes \G, C^*(\G)\bigr)\) of the Dirac 
class \([Z]\in \KK^\G_{-d}(C_0(Z), \C)\) under descent.  We have computed 
this class above and showed that it is that of the Fourier-Mukai correspondence. 
This proves the other assertion.

\end{proof}

\begin{remark}
Taking its index of the correspondence \eqref{equation:aodsifasldfjas;ldkf1}, that is, 
composing it with the correspondence 
\[ \pnt \leftarrow T^d\xrightarrow{\id}T^d,\]
gives the correspondence 

 \begin{equation}
\label {equation:aodsifasldfjas;ldkf1}
\pnt \leftarrow ( T^d\times \widehat{T^d}, \Poincare)\xrightarrow{\pr_2} \widehat{T^d},
 \end{equation}
The corresponding analytic cycle for \(\KK_{-d} (\C, C(\widehat{T}^d))\) is 
sometimes described as a kind of Hilbert module index: twisting in a suitable sense 
the 
Dirac operator on \(T^d\) by the Poincar\'e bundle over \(T^d\times \widehat{T}^d\) 
gives a Fredholm operator on a \(C(\widehat{T}^d)\)-module whose \(C(\widehat{T}^d)\)-
index gives a 
class in \(\K_{-d}(C^*(\widehat{T}^d))\). 
After Fourier transform, what t we are describing is the application of the 
Baum-Connes analytic assembly map 
\[ \mu \colon \KK_*(C(T^d), \C) \to \KK_{-d}(\C, C^*(\Z^d))\]
to the class \([T^d]\) of the Dirac operator on \(T^d\). 

The Theorem above implies that application of 
descent 
\[ \KK^{\Z^d}_*(C_0(\R^d), \C) \to \KK_*(C_0(\R^d)\rtimes \Z^d, C^*(\Z^d))\]
and Morita equivalence \(C_0(\R^d)\rtimes \Z^d\sim C(T^d)\) to the 
\(\Z^d\)-equivariant Dirac class \([\R^d]\) gives the class of the Fourier-Mukai 
correspondence, but from this it follows that that after taking the index, as just discussed,
we obtain the class \(\mu ([\R^d]) \in \KK_{-d} (\C, ^*(\Z^d))\).

This equality 
of classes in \(\K_*\bigl( C^*\Z^d)\bigr)\) has a more general version, equating
two apparently two slightly different 
methods of defining the Baum-Connes assembly map 
for torsion-free discrete groups. The paper \cite{Land} of Land 
shows that they are the same. 
There is therefore some connection between our statement and Land's, 
at least in this special case of the group \(\Z^d\), and only after taking the 
index; our statement involving the Fourier-Mukai correspondence 
is slightly more `bivariant' in nature. But some of the arguments in the proof 
above rather similar to some of those of Land.

  \end{remark}

We now return to the localization map for \(G = \Z^d\). We are going to 
describe it geometrically.

\subsection{Computation of the localization map for free abelian groups}

If \(\G = \Z^d\),  \(T^d\) the torus \( T^d:= \R^d/\Z^d\), and \(\widehat{T^d} := \widehat{\Z^d}\) the 
dual torus  
then \(\KK^{\Z^d}_*(\C, \C) \cong \KK_*(C^*(\Z^d), \C)\) and 
by Fourier transform 
\(\KK_*(C^*(\Z^d), \C) \cong \KK_*(C(\widehat{T^d}), \C)\), while Morita invariance implies that 
\(\KK_*^{\Z^d}(C_0(\R^d), \C) \cong \KK_*(C(\R^d/\Z^d) , \C) = \KK_*(C(T^d), \C)\).

The Dirac-localization map therefore identifies with a map
\begin{equation}
\label{equation:dirac_map_abelian}
 \K_{-*}(\widehat{T^d})   \to \K_{*+d}(T^d).
 \end{equation}

Now fixing a basis and dual basis, the 
groups \(\K_*(T^d)\) and \(\K_*(\widehat{T^d})\) 
 can each be identified, by the K\"unneth Theorem,
 with the graded tensor product 
 \[\K_*(\T) \hat{\otimes_\Z} \cdots \hat{\otimes_\Z} \K_*(\T).\]
 This blurs a little the difference between \(\T^d\) and \(\widehat{T^d}\) but 
 we will return to a coordinate free perspective below. 
 
 The group \(\K_*(\T)\) has two generating \(\K\)-homology classes: the \(\K_0(\T)\)-class 
 \([\pnt]\) of a point \(p\) in \(\T\), 
 and the \(\K_1(\T)\)-class \([\T]\) of the Dirac operator on the 
 circle. Now for an \(r\)-tuple 
 \({\bf k} = k_1 < \cdots <k_r\) of integers from \( \{1, \ldots , n\}\) 
 we associate the 
  \(l\)-dimensional correspondence (or Baum-Douglas cycle) 
 \begin{equation}
 \label{equation:bd_cycle_ofa_standard_coordinate_embedding}
  T^d \leftarrow T^l \to \pnt,
  \end{equation}
 with \(T^j\) carrying its standard (product) \(\K\)-orientation, and the left map 
 \(T^l \to T^d\) sending the \(j\)th coordinate 
 circle of \(\T^l\) into the \(k_j\)th coordinate of \(T^d\), and putting the point 
 \(p\) into the other coordinates. 
 We call this a \emph{standard coordinate 
 embedding} of an \(l\)-torus in \(T^d\). Let \([\bf{k}] \in \K_r(T^d)\) be it's Baum-Douglas 
 class.

Obviously, every standard coordinate embedding \(T^l\to T^d\) comes along with 
a `dual' coordinate embedding \(T^{d-l}\to T^d\), mapping circles into the complementary 
coordinates, and putting \(p\)'s in the other coordinate spots. Let \([{\bf k}^\perp] \in 
\K_{*+d}(T^d)\) be 
its class.

\begin{theorem}
\label{theorem:dirac_for_zd}
Let 
\[ \Dirac \colon \K_*(\widehat{T^d})  \cong
\ \KK^{\Z^d}_{-*}(\C ,\C) \to \KK^{\Z^d}_{-*-d}(C_0(\R^d), \C)\cong 
\K_{*+d}(T^d)\]
be the localization map for \(\Z^d\). 

If 
\([\bf{k}]  =  k_1 < \cdots < k_r\)
 is the class of a standard coordinate embedding, then 
\[ \Dirac ({\bf k}]) = (-1)^{dk+\frac{k(k-1)}{2}} \,\cdot [\bf{k}^\perp],\] 
with \([\bf{k}^\perp]\) the class of the dual coordinate embedding.  

\end{theorem}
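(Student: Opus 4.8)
The plan is to reduce the whole statement to the one-dimensional Fourier--Mukai transform by composing topological correspondences coordinate by coordinate. By Theorem~\ref{theorem:dirac_is_fourier_mukai}, under the identifications of the statement the localization map $\Dirac$ is Kasparov composition with the Fourier--Mukai correspondence $T^d\xleftarrow{\pr_1}(T^d\times\widehat{T^d},\Poincare_d)\xrightarrow{\pr_2}\widehat{T^d}$, and on Baum--Douglas cycles this is carried out by the transversality recipe of Section~\ref{subsec:topcorr}. So first I would represent $[{\bf k}]\in\K_r(\widehat{T^d})$ by the cycle $\widehat{T^d}\xleftarrow{\iota_{\bf k}}T^r\to\pnt$, where $\iota_{\bf k}$ sends the $j$-th circle of $T^r$ to the $k_j$-th coordinate circle of $\widehat{T^d}$ and the trivial character $1\in\widehat\T$ to each of the remaining $d-r$ coordinates (we may take the base point $p$ to be $1$, since $[\pnt]$ is independent of it). Since $\pr_2$ is a submersion it is automatically transverse to $\iota_{\bf k}$, so the composite is the correspondence $T^d\xleftarrow{\pr_1}\bigl((T^d\times\widehat{T^d})\times_{\widehat{T^d}}T^r,\ (\id\times\iota_{\bf k})^*\Poincare_d\bigr)\to\pnt$, in which the fibre product is canonically $T^d\times T^r$, the right leg is the collapse, and its $\K$-orientation is the one obtained by stacking the fibrewise $\K$-orientation of $\pr_2$ (the standard orientation of the $T^d$-fibre) over the standard $\K$-orientation of $T^r$.

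Next I would split this correspondence coordinate by coordinate. From the defining condition \eqref{equation:sdfsdfsdfgjdkfjg} the Poincar\'e bundle factors as an external product of the one-dimensional Poincar\'e bundles over the factors $\T_{(m)}\times\widehat\T_{(m)}$, and the restriction of $\Poincare_1$ to $\T\times\{1\}$ is the trivial line bundle. Hence $(\id\times\iota_{\bf k})^*\Poincare_d$ is the external product over $m$ of $\Poincare_1$ on $\T_{(k_j)}\times\T_{(j)}$ when $m=k_j\in{\bf k}$, and of the trivial bundle on $\T_{(m)}$ when $m\notin{\bf k}$. By the multiplicativity of correspondence composition under external products, the composed correspondence is therefore, up to a reordering sign (accounted for below), the external product over $m=1,\dots,d$ of the one-dimensional correspondences $\T_{(k_j)}\xleftarrow{\pr_1}(\T_{(k_j)}\times\widehat\T_{(k_j)},\Poincare_1)\to\pnt$ for $m=k_j\in{\bf k}$ and $\T_{(m)}\xleftarrow{\id}\T_{(m)}\to\pnt$ for $m\notin{\bf k}$. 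The latter is the fundamental class $[\T_{(m)}]$; the former is $\Dirac_1([\widehat\T_{(k_j)}])$, the image of the fundamental class of the circle under the one-dimensional Fourier--Mukai transform $\Dirac_1\colon\K_*(\widehat\T)\to\K_{*+1}(\T)$. A short Chern-character computation identifies it with $\pm[\pnt_{(k_j)}]$: it is the pushforward along $\pr_1$ of the fundamental class of $\T\times\widehat\T$ twisted by $\Poincare_1$, and since $c_1(\Poincare_1)$ generates $H^2(\T\times\widehat\T)$, only the point summand of the twisted fundamental class survives that pushforward. Reassembling, $\boxtimes_{m}(\text{factor}_m)=(-1)^r\,\boxtimes_{m\in{\bf k}}[\pnt_{(m)}]\,\boxtimes_{m\notin{\bf k}}[\T_{(m)}]$, and this coordinate-ordered external product is the dual coordinate embedding, up to the sign built into the conventional $\K$-orientation on $[{\bf k}^\perp]$.

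That leaves the sign, which I expect to be the one genuinely delicate point. I anticipate three contributions. First, the $(-1)^r$ just produced, one sign for each coordinate of ${\bf k}$, coming from $\Dirac_1([\widehat\T])=-[\pnt]$ (this is the $d=r=1$ case of the assertion and is exactly what pins down the orientation conventions). Second, a Koszul sign $(-1)^{\sum_j(d-k_j)}=(-1)^{dr-\sum_j k_j}$, produced when one commutes the external product of the degree $-d$ Fourier--Mukai correspondence past the external decomposition $[{\bf k}]=\boxtimes_m\alpha_m$, via the standard rule $(\boxtimes_i x_i)\otimes(\boxtimes_i y_i)=(-1)^{\sum_{i>j}(\partial x_i)(\partial y_j)}\boxtimes_i(x_i\otimes y_i)$ for Kasparov products of external products; equivalently it is precisely the reordering sign needed to reinterleave the $T^r$-circles among the $T^d$-circles of the composite. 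Third, a shuffle sign $(-1)^{\sum_j(k_j-j)}=(-1)^{\sum_j k_j-r(r+1)/2}$ comparing the coordinate-ordered product $\boxtimes_m\gamma_m$ with $[{\bf k}^\perp]$ in its conventional orientation. Multiplying the three, the $\sum_j k_j$ terms cancel and the exponent collapses to $r+dr-r(r+1)/2\equiv dr+r(r-1)/2\pmod 2$ — the claimed factor (the $dk+\tfrac{k(k-1)}{2}$ of the statement, with $k=r$). So the real work is to fix once and for all the $\K$-orientations on the coordinate subtori of $T^d$ and $\widehat{T^d}$ and the sign of $c_1(\Poincare_1)$, and then verify that these three contributions combine exactly as above; everything else is formal, following from Theorem~\ref{theorem:dirac_is_fourier_mukai} and the functoriality of composition of topological correspondences.
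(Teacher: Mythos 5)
Your argument is essentially correct, but it follows a genuinely different route from the paper. The paper reduces, via the K\"unneth theorem and external products, to the single case $d=1$ of $\Z$ acting on a point, and then computes the two generators analytically: the point class of $\widehat\T$ goes to $[\T]$ because the trivial representation acts as a unit, and the identity $\Dirac([\widehat\T])=[\pnt]$ is converted, using invertibility of $[\R]$ in $\KK^{\Z}$ (the dual-Dirac element $\eta$), into the equivalent statement $\eta\otimes_{C_0(\R)}[\ev]=[\widehat\T]$, whose left side is the explicit spectral triple $\bigl(l^2(\Z),\,\text{multiplication by }n\bigr)$, i.e.\ the Fourier transform of the circle Dirac operator. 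You instead invoke Theorem \ref{theorem:dirac_is_fourier_mukai} and compute the composition of topological correspondences directly, factoring the Poincar\'e bundle as an external product of one-dimensional Poincar\'e bundles and handling the one delicate one-dimensional factor by a Chern-character argument. This is exactly the route the paper's remark after the theorem flags as ``not running so smoothly'' (the two-dimensional composite must be Bott-unmodified down to a point); your Chern-character shortcut is a legitimate way around that, provided you note that it pins down the class only because $\K_0(\T)\cong\Z$ is torsion-free with injective Chern character. What each approach buys: the paper's dual-Dirac computation stays at the level of explicit analytic cycles and avoids all orientation and Koszul bookkeeping in the correspondence calculus; yours is more uniformly geometric and, unlike the paper's proof, actually engages with the sign $(-1)^{dk+\frac{k(k-1)}{2}}$, decomposing it into the $(-1)^r$ from the one-dimensional transforms, a Koszul reordering sign, and a shuffle sign, which indeed collapse to the stated exponent. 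The sign analysis is still a sketch --- the conclusion ultimately rests on fixing once and for all the $\K$-orientation conventions on the coordinate subtori and on $\Poincare_1$, as you say --- but the paper's own proof is silent on this point, so your proposal is, if anything, more explicit there.
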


In particular, the Dirac map, that is, composition with \eqref{equation:aodsifasldfjas;ldkf}, 
 interchanges the \(\K\)-homology classes of a 
point in the dual torus \(\widehat{T}^d\), and the class of the Dirac operator on the 
torus \(T^d\).

\begin{proof}

Given the K\"unneth theorem, and
 taking products, it suffices to verify the assertion for \(\Z\), since the 
 \(\K\)-homology classes we are considering are all external products 
 of \(\K\)-homology classes for \(\T\).  
 
   The Dirac 
map is thus 
\begin{multline}
 \KK_*( C(\widehat{\Z}), \C) \cong \KK_*(C^*(\Z), \C) =  \KK^\Z_*(\C, \C) \xrightarrow{[\R]\otimes} \KK^\Z_{*-1}(C_0(\R), \C) \\ \cong \KK_{*-1}(C(\R/\Z), \C).
 \end{multline}
The \(\K\)-homology class of a point in \(\widehat{\Z}\) maps to the 
class of the trivial representation in \(\KK^\Z_0(\C, \C)\), which acts as a unit, so the 
image under the next map is the \(\Z\)-equivariant Dirac class \( [\R]\in \KK_{-1}^\Z(C_0(\R), \C)\), 
\emph{i.e.} the Dirac class for the circle in \(\KK_{-1}(C(\R/\Z), \C)\).

Hence the Dirac map sends a point class \([\pnt]\) 
 in \(\K_0(\widehat{\T})\) to the Dirac class 
\([\T]\in \K_{-1}(\T)\). It remains to show that it maps \([\widehat{\T}]\) to the class 
\([\pnt]\) of a point in \(\K_0(\T)\). 

Under the identification 
\(\K_0(\T) \cong \KK^\Z_0(C_0(\R), \C)\), the point homology 
class for \(\T\) corresponds to the 
class 
\([\ev] \in \KK^\Z_0(C_0(\R), \C)\) of the \(\Z\)-equivariant representation 
\(C_0(\R) \to C_0(\Z) \subset \Comp(^2\Z)\) due to the inclusion \(\Z \to \R\). 
We need to prove that 
\([\widehat{\T}]\otimes_\C [\R]  = [\ev] \in \KK^\Z_0(C_0(\R), \C)\). 
To do so 
we employ the fact that \([\R]\in \KK_{-1}^\Z(C_0(\R), \C)\) 
is invertible in \(\KK^\Z\).  Let \(\eta \in \KK_1^\Z(\C, C_0(\R))\) be the class of the 
self-adjoint unbounded multiplier \(\delta (x) = x\) of \(C_0(\R)\). Then 
\[ \eta \otimes_{C_0(\R)} [\R] = 1\in \KK^\Z_0(\C, \C), \;\;  [\R]\otimes_\C \eta = 1_{C_0(\R)}\in 
\KK_0(C_0(\R), C_0(\R))\]
is the content of the Dirac-dual-Dirac method for \(\Z\).

Therefore the equation \([\widehat{\T}]\otimes_\C [\R]  = [\ev] \in \KK^\Z_0(C_0(\R), \C)\)
 we want to prove is 
equivalent to the equation 
\[ \eta\otimes_{C_0(\R)} [\ev] = [\widehat{\T}]\in \KK_{-1}^\Z(\C, \C).\]
But the composition on the left is clearly represented by the 
spectral triple with Hilbert space \( l^2(\Z)\), and operator the densely defined 
operator of multiplication by \(n\). It's Fourier transform is therefore the 
Dirac operator on the circle \(\widehat{\T}\). This completes the proof.

\end{proof}

\begin{remark}
Naturally, one can try
 to prove that the Dirac map, say, for \(\Z\), interchanges the point class and 
the Dirac classes for \(\T\) and \(\widehat{\T}\), using the correspondence 
\eqref{equation:aodsifasldfjas;ldkf} rather directly. 

 Composition of the correspondences
\begin{equation}
   \T \leftarrow (\T\times \widehat{\T}, \beta) \rightarrow 
      \widehat{\T} \leftarrow \pnt \rightarrow \pnt 
\end{equation}
gives by transversality 
\begin{equation}
\T \leftarrow (\T , \beta|_{\T}) \rightarrow \pnt
\end{equation}
where we understand \(\T\) as a subset of \(\T\times \widehat{\T}\) by 
\(\T\times \{p\}\), the point we have picked. Taking \(p=0\), the zero 
character of \(\Z\), is convenient. The inclusion \(\T\to \T\times \T\) 
of the slice pulls back the f.g.p. \( C(\T \times \T)\)-module defining 
\(\beta\) to the module of continuous functions on \(\R\times \{0\}\) 
such that \( f(x+n,0) = f(x,0)\), \emph{i.e.} periodic functions, and hence 
the restricted module is precisely \(C(\T)\). Hence 
the restriction of \(\beta\) to \(\T\) is the class of the 
trivial line bundle, proving that 
the Dirac map sends the point \(\K\)-homology class to the correspondence 
\[ \T \xleftarrow{\id} \T \to \pnt,\]
which describes the Dirac class \([\T]\). 

Things do not run so smoothly if one starts with the class \([\T]\). The resulting 
correspondence involves a manifold of dimension \(2\) which then must be 
Thom, or Bott un-modified, to a manifold of dimension zero (a point), and one has
to take care of the bundle as well. It is easier to use the dual-Dirac method
to do the calculation, as we did in the proof of Theorem 
\ref{theorem:dirac_for_zd}, since then the cycle one \emph{begins} the calculation
 with is a point. 
\end{remark}

With some labour, the following result can be deduced from Theorem 
\ref{theorem:dirac_for_zd}, but we will prove it more geometrically in a 
forthcoming note with D. Hudson \cite{EH}.

Suppose \(T\subset T^d\) is a \(j\)-dimensional 
torus subgroup. It lifts to a \(j\)-dimensional linear subspace 
\(L\subset \R^d\).

 Let \(L^\perp\) be all characters of \(\R^d\) which vanish on 
\(L\), \( (\Z^d)^\perp\) all \(\chi \in \widehat{\R^d}\) that vanish on \(\Z^d\) and 
\(T^\perp\) the projection to \(\widehat{\R^d}/ (\Z^d)^\perp \cong \widehat{\Z^d}= \widehat{T^d}\)
 of \(L^\perp\). It is a \(d-j\)-dimensional subtorus of \(\widehat{T^d}\). It can 
 be suitably \(\K\)-oriented in order to make the following result true.

\begin{theorem}
Let \(T\subset T^d\) be a \(j\)-dimensional linear torus, \(T^\perp \subset \widehat{T^d}\)
 its 
\(d-j\)-dimensional dual torus. Then the Dirac-Fourier-Mukai transform 
\[\K_j(T^d) \to \K_{j+d}(\widehat{T^d})\]
maps the cycle \([T]\) to the cycle \([T^\perp]\).

\end{theorem}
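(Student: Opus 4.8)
The plan is to deduce the statement from Theorem~\ref{theorem:dirac_for_zd} by reducing an arbitrary linear subtorus to a standard coordinate one via the automorphism group of \(\Z^d\). First recall that a \(j\)-dimensional linear subtorus \(T\subset T^d\) is the image of a rank-\(j\) \emph{primitive} sublattice \(\Lambda:=L\cap\Z^d\subset\Z^d\) (meaning \(\Z^d/\Lambda\) is torsion-free), and that every primitive sublattice of \(\Z^d\) is a direct summand. Hence there is \(A\in\GL_d(\Z)=\mathrm{Aut}(\Z^d)\) with \(A(\Z^j\times 0)=\Lambda\), and the induced automorphism \(\alpha\) of \(T^d=\R^d/\Z^d\) carries the standard coordinate subtorus \(T_{\mathrm{std}}:=(\R^j\times 0)/(\Z^j\times 0)\) onto \(T\). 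Dually, \(A\) acts on \(\widehat{T^d}=\mathrm{Hom}(\Z^d,\T)\) by \(\chi\mapsto\chi\circ A^{-1}\), and a short computation with the annihilator condition shows that this automorphism \(\widehat{\alpha}\) carries the standard dual coordinate subtorus \(T_{\mathrm{std}}^\perp\) onto \(T^\perp\). At the level of Baum--Douglas cycles we therefore have \(\alpha_*[T_{\mathrm{std}}]=[T]\) and \(\widehat{\alpha}_*[T_{\mathrm{std}}^\perp]=[T^\perp]\), up to a choice of \(\K\)-orientations which, by the hypothesis of the theorem, we may normalize freely.

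The crucial input is naturality of the Dirac-localization map under group automorphisms. An automorphism \(A\) of \(\Z^d\) extends to the linear automorphism \(A\) of \(\R^d\), intertwining the given \(\Z^d\)-action with its pullback along \(A\); since the class \([\R^d]\in\KK^{\Z^d}_{-d}(C_0(\R^d),\C)\) depends only on the oriented smooth structure of \(\R^d\), and \(A\) preserves this up to the sign \(\epsilon(A):=\mathrm{sign}\det A\), one gets \(A_*[\R^d]=\epsilon(A)\,[\R^d]\). Transporting this through descent, Morita equivalence and Fourier transform exactly as in the proof of Theorem~\ref{theorem:dirac_is_fourier_mukai} (or, more slickly, observing directly that the defining condition \eqref{equation:sdfsdfsdfgjdkfjg} of the Poincar\'e bundle is preserved by \((x,\chi)\mapsto(Ax,\chi\circ A^{-1})\), so that the Fourier--Mukai correspondence of Definition~\ref{definition:fmcorresnodence} is \(\GL_d(\Z)\)-equivariant for the diagonal action) shows that the Dirac--Fourier--Mukai transform \(\mathrm{DFM}\) intertwines the \(A\)-actions on its source and target up to the scalar \(\epsilon(A)\). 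Using in addition that the dual transform \(\K_j(T^d)\to\K_{j+d}(\widehat{T^d})\) relevant here satisfies the analogue of Theorem~\ref{theorem:dirac_for_zd}, by symmetry of the Poincar\'e bundle under \(\widehat{\widehat{T^d}}\cong T^d\), we then compute
\[
\mathrm{DFM}([T])=\mathrm{DFM}(\alpha_*[T_{\mathrm{std}}])=\pm\,\widehat{\alpha}_*\,\mathrm{DFM}([T_{\mathrm{std}}])=\pm\,\widehat{\alpha}_*[T_{\mathrm{std}}^\perp]=\pm\,[T^\perp],
\]
the middle equality being Theorem~\ref{theorem:dirac_for_zd} in its coordinate form, and the surviving sign being absorbed into the \(\K\)-orientation of \(T^\perp\).

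I expect the main obstacle to be the naturality assertion above: making precise, while tracking \(\K\)-orientations and the sign \(\epsilon(A)\), that the whole descent/Morita/Fourier apparatus defining \(\mathrm{DFM}\) is functorial for \(\mathrm{Aut}(\Z^d)\) and sends \([\R^d]\) to \(\epsilon(A)[\R^d]\). As a cross-check, closer to the more geometric proof promised in the text, one can compose correspondences directly: \([T]\in\K_j(T^d)\) is the cycle \(T^d\xleftarrow{\iota}T\to\pnt\), and composing it (transversality is automatic, the relevant leg being the submersion \(\pr_1\)) with the Fourier--Mukai correspondence read from \(T^d\) to \(\widehat{T^d}\) yields \(\widehat{T^d}\xleftarrow{\pr_2}(T\times\widehat{T^d},\ \Poincare_d|_{T\times\widehat{T^d}})\to\pnt\). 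One identifies \(\Poincare_d|_{T\times\widehat{T^d}}\) as the pullback of the Poincar\'e bundle \(\Poincare(T)\in\K^0(T\times\widehat{T})\) along \(\id_T\times\rho\), where \(\rho\colon\widehat{T^d}\to\widehat{T}\) is restriction of characters (a torus fibration with fibre \(T^\perp\)); a splitting \(\widehat{T^d}\cong\widehat{T}\times T^\perp\) then exhibits the correspondence as the external product of the Fourier--Mukai correspondence of \(T\) alone with the identity correspondence of \(T^\perp\). Theorem~\ref{theorem:dirac_for_zd} in its dual form, applied to the first factor, sends the fundamental class of \(T\) to a point class of \(\widehat{T}\), and translation-invariance of \(\K\)-homology identifies \([\pnt]\times[T^\perp]\) with \([T^\perp]\subset\widehat{T^d}\), again up to an orientation sign. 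Either route is the labour alluded to after Theorem~\ref{theorem:dirac_for_zd}.
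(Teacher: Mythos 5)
The paper itself contains no proof of this statement: it is recorded only with the remark that it ``can be deduced from Theorem \ref{theorem:dirac_for_zd} with some labour,'' the actual (more geometric) proof being deferred to the forthcoming note \cite{EH}. Your first argument is precisely that deduction, and it is essentially correct: a linear \(j\)-dimensional subtorus is the image of a primitive, hence complemented, sublattice of \(\Z^d\), so some \(A\in\GL_d(\Z)\) carries it to a standard coordinate subtorus and its annihilator data to the dual coordinate subtorus; the defining relation \eqref{equation:sdfsdfsdfgjdkfjg} shows the Poincar\'e bundle is invariant under the diagonal action \((x,\chi)\mapsto(Ax,\chi\circ A^{-1})\), so the Fourier--Mukai correspondence, and hence the transform, intertwines \(\alpha_*\) and \(\widehat\alpha_*\) up to the orientation sign \(\mathrm{sign}\det A\) coming from the \(\K\)-orientation of \(\pr_1\); combined with the coordinate case (Theorem \ref{theorem:dirac_for_zd}, read in the reversed direction via the inverse correspondence \(\FM'\) of \eqref{equation:aodsifasldfjas;ldkf_reversed}), this yields \([T]\mapsto\pm[T^\perp]\), the sign being absorbed into the choice of \(\K\)-orientation on \(T^\perp\) exactly as the statement permits. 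Your second, direct route --- composing \(T^d\leftarrow T\to\pnt\) with the reversed correspondence, identifying the restricted Poincar\'e bundle as the pullback of \(\Poincare(T)\) along the character-restriction fibration \(\widehat{T^d}\to\widehat{T}\) with fibre \(T^\perp\), splitting, and invoking translation invariance --- is closer in spirit to the geometric proof promised in \cite{EH} and is also sound. The only places needing real care are the ones you flag yourself: formulating the \(\GL_d(\Z)\)-naturality of the descent/Morita/Fourier package precisely (your Poincar\'e-bundle equivariance observation is the cleanest way, and it bypasses having to push \([\R^d]\) through descent), and keeping the orientation signs coherent; neither is a gap of substance, since the theorem's hypothesis leaves the \(\K\)-orientation of \(T^\perp\) free.
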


Finally, we note that just as application of descent and the Fourier transform 
to the Dirac class of \( (\R^d, \Z^d)\) 
produced the Fourier-Mukai correspondence
\[ \FM \in \KK_d\bigl(C(T^d), C(\widehat{T^d})\bigr),\]
when we do the same with the \emph{dual}-Dirac element 
\[\eta \in \KK_d^{\Z^d}\bigl( \C, C_0(\R^d)\bigr)\] 
we obtain the class 
\(\FM ' \in \KK_d\bigl(C(\widehat{T^d}), C(T^d)\bigr)\) of the 
`reversed' Fourier-Mukai correspondence 
\begin{equation}
\label{equation:aodsifasldfjas;ldkf_reversed}
\widehat{T^d}\xleftarrow{\pr_1} ( \widehat{T^d}\times T^d, \beta)\xrightarrow{\pr_2} T^d,
\end{equation}

\begin{corollary}
Application of the composition of functors 
\[\KK_d^{\Z^d}\bigl( \C, C_0(\R^d)\bigr) \to \KK_d\bigl(C^*(\Z^d), C_0(\R^d/\Z^d)\bigr) 
\cong \KK_d\bigl(C(\widehat{T^d}), C(T^d)\bigr)\]
to the dual-Dirac element \(\eta\) for \(\R^d\) gives the class of 
the dual Fourier-Mukai correspondence \(\FM'\) of 
\eqref{equation:aodsifasldfjas;ldkf_reversed}.

In particular, 
\[ \FM '\circ \FM = \id, \;\;\;\;\FM \circ \FM ' = \id,\]
as morphisms in the category \(\KK\).

\end{corollary}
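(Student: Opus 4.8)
The plan is to exploit the functoriality of every operation involved. Write $\mathcal{D}$ for the composite operation: apply descent $j_{\Z^d}$, then the Rieffel--Morita equivalence $C(T^d)\sim C_0(\R^d)\rtimes\Z^d$, then the Fourier isomorphism $C^*(\Z^d)\cong C(\widehat{T^d})$. Applied to a class in $\KK^{\Z^d}_*(A,B)$ with $A,B\in\{\C,\,C_0(\R^d)\}$ it produces a class in $\KK$ between the corresponding tori; since descent is a functor that is multiplicative for the Kasparov product, and the other two steps are induced by a genuine Morita equivalence and a $*$-isomorphism, $\mathcal{D}$ is compatible with Kasparov composition. Theorem~\ref{theorem:dirac_is_fourier_mukai} is precisely the statement $\mathcal{D}([\R^d])=\FM$, and what must be proved is that $\mathcal{D}(\eta)=\FM'$ together with the invertibility statement.

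First I would feed the Dirac--dual-Dirac relations through $\mathcal{D}$. With the normalisations fixed in the proof of Theorem~\ref{theorem:dirac_for_zd} (external products of the one-dimensional identities) one has
\[
\eta\otimes_{C_0(\R^d)}[\R^d]=1_{\C},\qquad [\R^d]\otimes_{\C}\eta=1_{C_0(\R^d)}
\]
in $\KK^{\Z^d}$. Applying $\mathcal{D}$ and using multiplicativity gives $\mathcal{D}(\eta)\otimes_{C(T^d)}\FM=1_{C(\widehat{T^d})}$ and $\FM\otimes_{C(\widehat{T^d})}\mathcal{D}(\eta)=1_{C(T^d)}$; that is, $\FM$ is invertible in the category $\KK$ with two-sided inverse $\mathcal{D}(\eta)$. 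This already yields the ``in particular'' clause, once $\mathcal{D}(\eta)=\FM'$ is established.

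For the identification $\mathcal{D}(\eta)=\FM'$ I would re-run, with minimal changes, the module computation in the proof of Theorem~\ref{theorem:dirac_is_fourier_mukai}, but starting from the standard unbounded representative of $\eta$: the spinor bundle $S$ over $\R^d$, with $\C$ acting by scalars on the Hilbert $C_0(\R^d)$-module of $C_0$-sections and operator given by Clifford multiplication by the position vector. Descent produces a Hilbert $C_0(\R^d)\rtimes\Z^d$-module, and the same fundamental-domain argument (the lemma following the inner-product formula \eqref{equation:theyayinnerproduct}) identifies it, after the Morita equivalence and Fourier transform, with the module of sections of the Poincar\'e bundle now regarded over $\widehat{T^d}\times T^d$, equipped with the bundle of flat-twisted Dirac operators along the fibres of the projection $\widehat{T^d}\times T^d\to T^d$. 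By the analytic-equals-topological index theorem recalled in Section~\ref{subsec:topcorr}, this analytic cycle is exactly the class of the topological correspondence $\widehat{T^d}\xleftarrow{\pr_1}(\widehat{T^d}\times T^d,\beta)\xrightarrow{\pr_2}T^d$ of \eqref{equation:aodsifasldfjas;ldkf_reversed}, with its product $\K$-orientation; that is, $\FM'$.

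The main obstacle is this last identification, and inside it the bookkeeping of $\K$-orientations and of the \emph{direction} of the Fourier transform: one must check that the bundle that appears is genuinely the Poincar\'e bundle with its two tensor factors interchanged (which is legitimate because $\Poincare_{(x,\chi)}$ depends only on the symmetric pairing $\chi(x)$ under $\widehat{\widehat{\Z^d}}=\Z^d$) rather than its dual or conjugate, and that the $\K$-orientation induced on $\pr_2$ by the descent of $\eta$ agrees with the product orientation chosen in \eqref{equation:aodsifasldfjas;ldkf_reversed}, leaving no residual sign. With the normalisations of $\eta$ and $[\R^d]$ fixed as above, the two relations force any such sign to cancel, so the corollary holds as stated; making that cancellation visible, rather than merely recording that $\FM$ is invertible, is the one genuinely delicate point.
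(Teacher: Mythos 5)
Your proposal is correct and follows essentially the same route the paper intends: the paper offers no separate proof, but indicates exactly this argument — repeat the fundamental-domain descent/Morita/Fourier computation of Theorem \ref{theorem:dirac_is_fourier_mukai} with the standard unbounded representative of \(\eta\) in place of the Dirac class to obtain \(\FM'\), and then deduce the two composition identities from the Dirac--dual-Dirac relations together with the multiplicativity of descent, Morita equivalence and the Fourier isomorphism. Your explicit attention to the orientation/sign bookkeeping in identifying the descended module with the reversed Poincar\'e-bundle correspondence is a reasonable amplification of what the paper leaves implicit, not a departure from it.
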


The fact that the Dirac map 
descends to the Fourier-Mukai transform seems to be one of the `well-known to experts'
kind, but has not apparently been written down anywhere as far as the author has been able 
to determine. The physics literature, however, is extensive on the matter, as Fourier-Mukai 
duality is a form of T-duality (see for example \cite{MR}). The recent paper 
\cite{Block} treats Fourier-Mukai duality in terms of \(\K\)-theory as well, but the context is 
slightly different.

\section{The Dirac class of a discrete group}

 Motivated by the 
calculations with \(\Z^d\) of the previous section,
 we now proceed to a
  notion of the Dirac class on the 
`noncommutative manifold' underlying the C*-algebra  \(C^*(\G)\)
for possibly non-abelian discrete groups \(\G\). 

\subsection{Point classes and Dirac classes}
We will continue to assume that \(\G\) admit a smooth orientation \( (\G, Z)\). 
The group C*-algebra
 in this case may be considered -- at least at the level of 
homology -- as a `noncommutative compact smooth oriented manifold' of dimension 
\(d\) with the `Dirac class' \([\widehat{\G}]\in \KK^\G_{d}(\C, \C) \cong 
\K^{d}\bigl( C^*(\G)\bigr)\) defined below, playing 
the role of the class of the associated Dirac operator.

Choose a point \(x_0 \in Z\). The group \(\G\) acts on \(l^2\G\) by the 
regular representation. 
The composition 
\begin{equation}
\ev \colon C_0(Z) \xrightarrow{\mathrm{restr}} C_0(\G x_0) \rightarrow C_0(\G) \xrightarrow{\mathrm{mult}} \Comp(l^2\G)
\end{equation}
is a \(\G\)-equivariant *-homomorphism and determines a class 
\begin{equation}
\label{equation:point_evaluation_class}
 [\ev]\in \KK^\G_0(C_0(Z), \C).
 \end{equation}
 Since 
\(Z\) is a classifying space, it is path-connected and \([\ev]\) is 
independent of the choice of point.

\begin{definition}
\label{definition:Dirac_class_for_group}
Let \((\G, Z)\) be a smooth oriented group of dimension \(d\). 
A \emph{Dirac class} for \(\G\) is any 
class 
\[ [\widehat{\G}]\in \K^{d} \bigl(C^*(\G)\bigr) = \KK^\G_d(\C, \C)\]
such that 
\[ \Dirac ([\widehat{\G}]) = [\ev]\in \KK^\G_0(C_0(Z), \C),\]
where \(\Dirac \) is the localization map (Definition \ref{definition:Dirac_map}.) 
\end{definition}

The definition is of course largely motivated by Theorem
 \ref{theorem:dirac_for_zd}.

\begin{remark}
One could make a variant of the definition involving the 
\emph{reduced} C*-algebra of the group. The projection 
\(C^*(\G) \to C_r^*(\G)\) induces a pullback 
map \(\K^*\bigl( C^*_r(\G)\bigr) \to \K^*\bigl( C^*(\G)\bigr)\) so 
there is an associated (reduced) localization map 
\begin{equation}
\label{equation:reduced_Dirac_map} 
\K^*\bigl( C^*_r(\G)\bigr) \to \K^*\bigl( C^*(\G)\bigr) \cong \KK^\G_*(\C, \C) 
\xrightarrow{\otimes_\C [Z_\G]}\KK^\G_{*+d}(C_0(Z), \C).
\end{equation}
Occasionally we will call a class in \(\KK_{+d} (C^*_rG, \C)\) 
mapping to \([\ev]\in \KK^\G_0(C_0(Z), \C)\) under the reduced localization map 
a \emph{reduced Dirac class}; note 
that the projection \(C^*(\G) \to C^*_r(\G)\) pulls a reduced 
Dirac class back to a Dirac class, so the existence of a reduced class is 
stronger. 

\end{remark}

\begin{example}
If \(\G\) is finite, so \(Z\) is a point, 
the localization map is the identity map and the Dirac class for 
\(\G\) is represented by the homomorphism 
\[ \lambda\colon C^*(\G) \to \K(l^2\G),\]
induced by the regular representation.
In particular, if \(\G\) is finite abelian, so that \(C^*(G) \cong C(\widehat{G})\), 
the Dirac class of \(G\) is the sum of the point \(\K\)-homology classs of 
\(\widehat{G}\). More generally, the Dirac class of a finite group 
is the sum of the point \(\K\)-homology classes of the points of 
\(\widehat{G}\), each with multiplicity given by its dimension.

\end{example}

\begin{proposition}
\label{proposition:Dirac_of_zd}
 The Dirac class \([\widehat{\Z^d}]\in \KK^{\Z^d}_{d}  (\C, \C) \cong 
 \K^{d}(C^*(\Z^d)\) 
 of the smooth oriented group 
 \( (\Z^d, \R^d)\) is the 
Fourier transform of the transverse Dirac class \([ \widehat{\Z^d}]\) of 
its Pontryagin dual.  

\end{proposition}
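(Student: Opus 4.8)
The plan is to combine the computation of the localization map already carried out in Theorem \ref{theorem:dirac_for_zd} with the characterizing property of the Dirac class from Definition \ref{definition:Dirac_class_for_group}. What needs to be shown is that the Fourier transform of the transverse Dirac class $[\widehat{\Z^d}] \in \KK_{-d}(C(\widehat{T^d}), \C)$ of the Pontryagin dual torus $\widehat{T^d}$, when transported into $\KK^{\Z^d}_{d}(\C,\C)$ via the identifications $\K^d(C^*(\Z^d)) \cong \K^d(C(\widehat{T^d})) \cong \KK_{-d}(C(\widehat{T^d}),\C)$, is mapped by the localization map $\Dirac$ to the point-evaluation class $[\ev] \in \KK^{\Z^d}_0(C_0(\R^d), \C)$.

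First I would unwind the identifications. The transverse Dirac class of $\widehat{T^d}$ is, by definition, the $\K$-homology class $[\widehat{T^d}] \in \K_{-d}(\widehat{T^d})$ of the Dirac operator on the dual torus, which in Baum--Douglas terms is represented by the correspondence $\widehat{T^d} \xleftarrow{\id} \widehat{T^d} \to \pnt$. Under the chain of identifications $\KK^{\Z^d}_{-*}(\C,\C) \cong \KK_*(C^*(\Z^d),\C) \cong \K_*(\widehat{T^d})$ this is exactly the element of $\KK^{\Z^d}_d(\C,\C)$ that we are calling the candidate Dirac class. So the statement to be proved is precisely that $\Dirac$, viewed as the map $\K_*(\widehat{T^d}) \to \K_{*+d}(T^d)$ of \eqref{equation:dirac_map_abelian}, sends $[\widehat{T^d}]$ to the point class $[\pnt] \in \K_0(T^d) \cong \KK^{\Z^d}_0(C_0(\R^d),\C)$, the latter being identified with $[\ev]$ exactly as in the proof of Theorem \ref{theorem:dirac_for_zd}.

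The core input is then already available: in the proof of Theorem \ref{theorem:dirac_for_zd} (reducing to $d=1$ by the K\"unneth theorem and multiplicativity of $\Dirac$ under external products, then taking $d$-fold external products) it is shown that the Fourier--Mukai / Dirac map interchanges the point class $[\pnt] \in \K_0(\widehat{T^d})$ with the Dirac class $[T^d] \in \K_{-d}(T^d)$, and dually sends $[\widehat{T^d}] \in \K_{-d}(\widehat{T^d})$ to $[\pnt] \in \K_0(T^d)$. This second assertion is exactly what the present proposition asserts, once one checks that $[\pnt] \in \K_0(T^d) \cong \KK^{\Z^d}_0(C_0(\R^d),\C)$ corresponds to $[\ev]$, which is precisely the identification $[\pnt] \leftrightarrow [\eval] \in \KK^{\Z^d}_0(C_0(\R^d),\C)$ used in that proof (the class of the $\Z$-equivariant $*$-homomorphism $C_0(\R) \to C_0(\Z) \subset \Comp(\ell^2\Z)$ induced by $\Z \hookrightarrow \R$, taken $d$-fold).

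The only genuine content beyond citing Theorem \ref{theorem:dirac_for_zd} is bookkeeping, and that is where I expect the one mild obstacle: one must be careful that the Fourier transform $C^*(\Z^d) \cong C(\widehat{T^d})$ and the Morita equivalence $C_0(\R^d) \rtimes \Z^d \sim C(T^d)$ are applied consistently, so that "transverse Dirac class of the Pontryagin dual" really does correspond, under these isomorphisms, to the element of $\K^d(C^*(\Z^d))$ fed into the localization map, and so that sign/orientation conventions (the $(-1)^{dk + k(k-1)/2}$ factors of Theorem \ref{theorem:dirac_for_zd}) do not intervene for the specific classes $[\pnt]$ and $[\widehat{T^d}]$ in question --- for these two classes $k=0$ and $k=d$ respectively, so the sign is $+1$ in both cases, and the identification is clean. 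Once these compatibilities are recorded, the proposition follows immediately from the second displayed conclusion of Theorem \ref{theorem:dirac_for_zd}.
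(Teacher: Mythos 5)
Your proposal is essentially the paper's own (implicit) argument: Proposition \ref{proposition:Dirac_of_zd} is stated without separate proof precisely because it is the case \(\mathbf{k}=(1,\ldots,d)\) of Theorem \ref{theorem:dirac_for_zd}, whose proof (the \(d=1\) dual-Dirac computation plus K\"unneth/external products) shows that the localization map sends the fundamental class \([\widehat{T^d}]\) to \([\pnt]=[\ev]\), which is exactly the defining property of a Dirac class for \((\Z^d,\R^d)\). One small caveat: your evaluation of the sign \((-1)^{dk+k(k-1)/2}\) at \(k=d\) as \(+1\) is not correct for all \(d\) (already \(d=1\), \(k=1\) gives \(-1\) from that formula), but the conclusion is unaffected because the explicit computation in the proof of Theorem \ref{theorem:dirac_for_zd} shows the image of \([\widehat{\T}]\) is \(+[\pnt]\) with no sign, so it is that computation, not the displayed sign formula, that you should cite for the top-dimensional class.
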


\subsection{The Dirac class is non-torsion}
The following proves that \([\widehat{\G}]\) is always nonzero and non-torsion 
in 
\(\K^{d}\bigl( C^*(\G)\bigr)\). The result will be refined later. 


\begin{proposition}
\label{proposition:nonvanishing_of_Dirac_for_groups}
Let \( (\G, Z)\) be a smooth, \(\K\)-oriented group, and \([\widehat{\G}]\in \K^{d}\bigl( 
C^*(\G)\bigr)\) a Dirac class. Let 
\[ \mu \colon \KK^\G_*(C_0(Z), \C) \to \K_*\bigl(C^*(\G)\bigr)\]
be the Baum-Connes assembly map. Then 
\begin{equation}
\label{equation:pairings_1}
 \langle \mu ([Z]), [\widehat{\G}]\rangle 
:=\mu ([Z])\otimes_{C^*(\G)} [\widehat{\G}]) = 1\in \KK(\C, \C) \cong \Z.
\end{equation}
In particular, the Dirac class \([\widehat{\G}]\) of any smooth \(\K\)-oriented group 
induces a surjection \(\K_*\bigl(C^*(\G)\bigr)\to \Z\), and is 
never zero in \(\K\)-homology, nor torsion. 

The analogous statement holds for a reduced Dirac class.  
\end{proposition}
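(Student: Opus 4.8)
The plan is to exhibit a single $\KK$-theory class in $\K_*\bigl(C^*(\G)\bigr)$ whose pairing with the Dirac class $[\widehat{\G}]$ computes to $1$, and the natural candidate is $\mu([Z])$, the image of the transverse Dirac class $[Z]\in\KK^\G_{-d}(C_0(Z),\C)$ under the Baum-Connes assembly map. The key idea is that the pairing
\[
\langle \mu([Z]),[\widehat{\G}]\rangle = \mu([Z])\otimes_{C^*(\G)}[\widehat{\G}]\in\KK(\C,\C)\cong\Z
\]
can be re-expressed, by the defining property of $[\widehat{\G}]$, in terms of the localization map. First I would recall that $\mu([Z])$ is, by the standard description of the assembly map for a proper $\G$-$\Cstar$-algebra, the image of $[Z]$ under descent composed with the coefficient map induced by the trivial representation $C^*(\G)\to\C$; concretely $\mu([Z])=\epsilon_*\circ j_\G([Z])$ in the notation of the proof of Theorem~\ref{theorem:dirac_is_fourier_mukai}. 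The same proof shows that the standard identification $\KK^\G_*(C_0(Z),\C)\cong\KK_*(C_0(Z)\rtimes\G,\C)$ factors through $j_\G$ followed by $\epsilon_*$, so that one can move the whole computation back into the equivariant category $\KK^\G$.

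The crucial step is then a Kasparov-product manipulation. Using associativity of the Kasparov product and the compatibility of descent with products, the pairing $\mu([Z])\otimes_{C^*(\G)}[\widehat{\G}]$ equals the image under $\epsilon_*$ of $j_\G\bigl([Z]\otimes_\C[\widehat{\G}]\bigr)$, where $[Z]\otimes_\C[\widehat{\G}]\in\KK^\G_0(C_0(Z),\C)$ — but by the definition of the localization map $\Dirac$ and the defining property of a Dirac class (Definition~\ref{definition:Dirac_class_for_group}), we have
\[
[Z]\otimes_\C[\widehat{\G}] = \Dirac([\widehat{\G}]) = [\ev]\in\KK^\G_0(C_0(Z),\C).
\]
Hence $\langle\mu([Z]),[\widehat{\G}]\rangle$ is the image of $[\ev]$ under the composition $\KK^\G_0(C_0(Z),\C)\xrightarrow{j_\G}\KK_0(C_0(Z)\rtimes\G,\C)$ and evaluation against $C^*(\G)$ — i.e. it is the index of the Fredholm module underlying $[\ev]$. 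The last step is to compute that this index is $1$: the $\G$-equivariant $*$-homomorphism $\ev\colon C_0(Z)\to\Comp(l^2\G)$ factors through evaluation at the orbit $\G x_0$ and the multiplication representation of $C_0(\G)$ on $l^2(\G)$; the associated class, paired appropriately, picks out a single rank-one projection (the one supported at $x_0$, or rather its orbit), giving index $1$. This is essentially the statement that $[\ev]$ is a "point class" and that the assembly-map pairing of a point class with the fundamental class is $1$ — a normalization computation.

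The main obstacle, I expect, is bookkeeping rather than substance: keeping track of the various identifications ($j_\G$, $\epsilon_*$, Morita equivalence, the isomorphism $\KK^\G_*(A,\C)\cong\K^*(A\rtimes\G)$) and verifying that the product manipulation $\mu([Z])\otimes_{C^*(\G)}[\widehat{\G}]=\epsilon_*\,j_\G([Z]\otimes_\C[\widehat{\G}])$ is legitimate — this requires the compatibility of descent with the Kasparov product together with naturality of $\epsilon_*$, both standard but needing care with degrees and with the placement of the proper algebra $C_0(Z)$. Once that is in place, the identification with $[\ev]$ is immediate from the definition of Dirac class, and the final index computation is a direct inspection of the Fredholm module. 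For the reduced statement, one runs the identical argument with $C^*_r(\G)$ in place of $C^*(\G)$: the reduced assembly map and the reduced localization map fit into the same diagram, and since the trivial representation factors through $C^*_r(\G)$ as well (it is weakly contained in the regular representation only after tensoring — more precisely $\epsilon$ descends to $C^*_r(\G)$ precisely because we may compose with $C^*(\G)\to C^*_r(\G)$), the same product manipulation yields the value $1$. Concretely: a reduced Dirac class pulls back to a Dirac class under $C^*(\G)\to C^*_r(\G)$, and the assembly map pairing is compatible with this pullback, so the reduced pairing equals the unreduced one, namely $1$.
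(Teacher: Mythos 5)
Your overall strategy --- reduce the pairing, via associativity of the Kasparov product and the defining identity $\Dirac([\widehat{\G}]) = [\ev]$, to a normalization computation --- is the same as the paper's, but there is a genuine error at the start that propagates. The Baum--Connes assembly map is not $\epsilon_*\circ j_\G$: that composition is what the paper uses to describe the canonical isomorphism $\KK^\G_*(A,\C)\cong\KK_*(A\rtimes\G,\C)$, and it lands in the \emph{K-homology of the crossed product}, whereas $\mu([Z])$ must land in $\K_{-d}\bigl(C^*(\G)\bigr)=\KK_{-d}(\C, C^*(\G))$. The correct description, and the one the paper's proof rests on, is $\mu(f) = [P_\varphi]\otimes_{C_0(Z)\rtimes\G} j_\G(f)$, where $P_\varphi=\sum_{g\in\G} g(\varphi)[g]$ is the cut-off (Mischenko) projection in $C_0(Z)\rtimes\G$. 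Because you drop $[P_\varphi]$, your intermediate identity ``$\mu([Z])\otimes_{C^*(\G)}[\widehat{\G}] = \epsilon_*\, j_\G\bigl([Z]\otimes_\C[\widehat{\G}]\bigr)$'' is a type mismatch (the right-hand side lies in $\KK_0(C_0(Z)\rtimes\G,\C)$, not in $\Z$), and your final step --- ``the index of the Fredholm module underlying $[\ev]$'' --- is not a well-defined number: $[\ev]$ is a K-homology class of the non-unital algebra $C_0(Z)\rtimes\G$, and one must specify which K-theory class it is paired against.

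The missing K-theory class is exactly $[P_\varphi]$. With the correct formula for $\mu$, associativity gives
$\langle\mu([Z]),[\widehat{\G}]\rangle = [P_\varphi]\otimes_{C_0(Z)\rtimes\G}\bigl(j_\G([Z])\otimes_{C^*(\G)}[\widehat{\G}]\bigr) = [P_\varphi]\otimes_{C_0(Z)\rtimes\G}\Dirac([\widehat{\G}]) = [P_\varphi]\otimes_{C_0(Z)\rtimes\G}[\ev]$,
using that under $\KK^\G_*(C_0(Z),\C)\cong\KK_*(C_0(Z)\rtimes\G,\C)$ the localization map becomes composition with $j_\G([Z])$. The normalization then has real content you still need to supply: the homomorphism $C_0(Z)\rtimes\G\to C_0(\G)\rtimes\G\cong\Comp(l^2\G)$ induced by restriction to the orbit sends $P_\varphi$ to a projection homotopic through projections to a rank-one projection, and \emph{this} is what yields $[P_\varphi]\otimes[\ev]=1$. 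Your remark that $[\ev]$ ``picks out a single rank-one projection'' gestures at this, but without the cut-off projection in hand it cannot be made precise. Your treatment of the reduced case (pull back along $C^*(\G)\to C^*_r(\G)$ and use compatibility of the pairing) is fine once the unreduced computation is repaired.
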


\begin{proof}

Denote by \( f\mapsto \overline{f}\) the descent map 
\(\KK^\G_*(A,B) \to \KK_*(A\rtimes \G, B\rtimes \G)\). 
If \( (\G, Z)\) is a smooth oriented group, then \(\EG = Z\) and 
the Baum-Connes assembly map with coefficients \(B\) is, by definition, 
\(\mu (f) = [P_\varphi]\otimes_{C_0(Z)\rtimes \G} \overline{f}\) for 
any \(f\in \KK^\G_*(C_0(Z), B)\), and any 
 \emph{cut-off} function 
\(\varphi \in C_c(Z)\), \(0\le \varphi \le 1\) and \(\sum_{g\in \G} g(\varphi) = 1\). 
The space of cut-off functions is convex and nonempty, 
and \(P_\varphi := \sum_{g\in \G} g(\varphi)[g]\in 
C_0(Z)\rtimes \G\) is a projection whose homotopy-class does not depend on 
the choice of cut-off function. For the element \([\ev] \in \KK_0^\G(C_0(Z), \C)\) and 
the element \([P_\varphi]\in \K_0(C_0(Z)\rtimes \G)\), observe that 
the map \(C_0(Z) \rtimes \G \to C_0(\G ) \rtimes \G \cong C_0(\G)\rtimes \G = \Comp(l^2\G)\) 
maps \(P_\varphi\)  to a projection homotopic through projections to a 
rank-one projection. Hence 
\[ \langle [\ev] , [P_\varphi]\rangle = 1.\]
On the other hand, descent maps the Dirac class \([Z]\in 
\KK^\G_{-d}(C_0(Z), \C)\)  to a class 
\[[\overline{Z }]\in \KK_{-d}\bigl(C_0(Z)\rtimes \G, C^*(\G)\bigr),\] and 
identifying \(\KK^\G_*(C_0(Z) , \C) \cong \KK^\G_*(C_0(Z)\rtimes \G, \C)\), 
the Dirac map can be identified with the map 
\[ \KK_*(C^*(\G), \C) \to \KK_{*-d}(C_0(Z)\rtimes \G, \C)\]
of composition with \([\overline{Z}]\). Hence 
\begin{multline}
\langle \mu ([Z]) , [\widehat{\G}]\rangle 
=  [P_\varphi]\otimes_{C_0(Z)\rtimes \G} 
[\overline{Z_\G}]\otimes_{C^*(\G)} [\widehat{\G}]
= [P_\varphi]\otimes_{C_0(Z)\rtimes \G} \Dirac ([\widehat{\G}])\\
= [P_\varphi]\otimes_{C_0(Z)\rtimes \G} [\ev] = 1\in \Z \cong \KK_0(\C, \C)
\end{multline}
as claimed. 
\end{proof}

\subsection{Dirac classes and the dual-Dirac method}

If \((\G, Z)\) is a smooth, oriented group, then a \emph{dual-Dirac class for \(\G\)}
is a class \[\eta \in \KK^\G_{d}(\C, C_0(Z))\] such 
that \([Z]\otimes_\C \eta = 1_{C_0(Z)}\in \KK_0^\G(C_0(Z), C_0(Z))\). 
The existence or non-existence question of such \(\eta\) is determined by a certain 
coarse co-assembly map (see \cite{EM:dual_Dirac}), and so is a question about 
the large-scale geometry of \(\G\). The fact that the two coordinate projections 
\(Z\times Z \to Z\) are \(\G\)-equivariantly homotopy implies by an easy exercise 
that \([Z] \otimes_{\C} \eta = 1_{C_0(Z)}\) as well. 

 If \(Z\) admits a 
\(\G\)-invariant Riemannian metric of nonpositive curvature, then 
a dual-Dirac class exists and is described in detail below. More generally, 
a dual-Dirac morphism 
\(\eta\) exists if \(\G\) admits a uniform embedding in Hilbert space
 (this theorem appears as Theorem 9.2 
in \cite{EM:Coarse} but it is 
a consequence of assembling results of many others, like
 \cite{HR2}, \cite{STY}, or \cite{Tu}). This 
 is the case if \(G\) is linear, or hyperbolic, for example.

\begin{proposition}
\label{proposition:dual_Dirac_implies_Dirac}
If \( (\G, Z)\) is a smooth oriented, group with a dual-Dirac morphism 
\( \eta \in \KK^\G_{d}(\C, C_0(Z))\), then 
\[ \eta\otimes_{C_0(Z)} [\ev]\in \KK^\G_{d}(\C, \C) \cong \K^{d}\bigl( C^*(\G)\bigr)\] is a Dirac class for \(\G\). 

\end{proposition}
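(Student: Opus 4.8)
The plan is to verify the defining condition of Definition \ref{definition:Dirac_class_for_group} directly: I must show that $\Dirac(\eta \otimes_{C_0(Z)} [\ev]) = [\ev]$ in $\KK^\G_0(C_0(Z), \C)$. By Definition \ref{definition:Dirac_map}, $\Dirac$ is Kasparov composition with the transverse Dirac class $[Z] \in \KK^\G_{-d}(C_0(Z),\C)$ on the left, viewed via external product as the map $\otimes_\C [Z]$. So I need to compute $[Z] \otimes_\C \bigl( \eta \otimes_{C_0(Z)} [\ev] \bigr)$, appropriately interpreted, and show it equals $[\ev]$. The key algebraic input is the dual-Dirac identity $[Z] \otimes_\C \eta = 1_{C_0(Z)} \in \KK_0^\G(C_0(Z), C_0(Z))$, which is exactly the hypothesis on $\eta$.

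First I would set up the bookkeeping carefully. The class $\eta \otimes_{C_0(Z)} [\ev]$ lives in $\KK^\G_d(\C,\C)$; here $[\ev] \in \KK^\G_0(C_0(Z),\C)$ is composed with $\eta \in \KK^\G_d(\C, C_0(Z))$ over $C_0(Z)$. Applying $\Dirac$ means forming the external product with $[Z]$ over $\C$ and landing in $\KK^\G_{-d}\bigl(C_0(Z) \otimes \C, \C\bigr) = \KK^\G_{-d}(C_0(Z),\C)$ --- wait, the degree is $d + 0 + (-d) = 0$, consistent with $[\ev]$. Using associativity of the Kasparov product and the standard interchange between external product $\otimes_\C$ and the internal product, $\Dirac(\eta \otimes_{C_0(Z)} [\ev])$ can be rewritten as $\bigl([Z] \otimes_\C \eta\bigr) \otimes_{C_0(Z)} [\ev]$, where the first factor $[Z] \otimes_\C \eta$ is now read as an element of $\KK^\G_0(C_0(Z), C_0(Z))$. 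By the dual-Dirac hypothesis this first factor is $1_{C_0(Z)}$, so the whole expression collapses to $1_{C_0(Z)} \otimes_{C_0(Z)} [\ev] = [\ev]$, as required.

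The main obstacle is the bookkeeping in the previous paragraph: making the manipulation $\Dirac(\eta \otimes_{C_0(Z)} [\ev]) = \bigl([Z] \otimes_\C \eta\bigr) \otimes_{C_0(Z)} [\ev]$ precise. One has to be careful that the external product $[Z] \otimes_\C(-)$ appearing in the definition of $\Dirac$, which formally tensors the coefficient algebra $\C$ (the domain of the argument) by $C_0(Z)$ on the left, really does match up, after the obvious identifications $C_0(Z) \otimes \C \cong C_0(Z)$ and $\C \otimes \C \cong \C$, with forming the internal product against $[Z]$ in a way that is associative with the subsequent $\otimes_{C_0(Z)} [\ev]$. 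This is a routine but slightly fiddly application of the associativity and functoriality axioms for the Kasparov product (together with the fact that external product with a fixed class commutes with internal products in the other variable); I would cite the standard references for these axioms rather than reproving them. Once that identification is in hand, the proof is a one-line consequence of the dual-Dirac equation, exactly as in the abelian computation in the proof of Theorem \ref{theorem:dirac_for_zd}, where $\eta$ was the class of the multiplier $\delta(x) = x$.
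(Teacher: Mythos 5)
Your proposal is correct and coincides with the paper's own proof: both apply $\Dirac$, re-bracket via associativity of the Kasparov product to get $([Z]\otimes_\C \eta)\otimes_{C_0(Z)}[\ev]$, and invoke the dual-Dirac identity $[Z]\otimes_\C \eta = 1_{C_0(Z)}$ to conclude the result is $[\ev]$. The extra care you take with the external/internal product bookkeeping is fine but is exactly the routine verification the paper leaves implicit.
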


\begin{proof}
\([Z]\otimes_\C ( \eta \otimes_{C_0(Z)} ) = ([Z]\otimes_\C \eta) \otimes_{C_0(Z)} [\ev] = [\ev]\) 
since \( [Z]\otimes_\C \eta = 1_{C_0(Z)}\). 

\end{proof}


Let \(\G\) be torsion-free, so 
 \(\K_{*}(C_0(Z)\rtimes \G) \cong \K^{*}(\G\backslash Z) \cong 
\K^{*}(B\G)\) and a class \(a\) in this ring yields a \emph{higher signature}, 
associating to any compact manifold \(M\) with fundamental group \(\G\) 
the index \(\index (D_M^{\mathrm{sig}}\cdot \chi^*a)\) of the signature operator on 
\(M\) twisted by the \(\K\)-theory class \(\chi^*a\), where \(\chi \colon M \to B\G\) 
is the classifying map for \(M\).

 By a theorem of Kaminker and Miller \cite{KM}, building on work of many others, 
such classes are homotopy-invariant when they are in the range of the 
inflation map 
\begin{equation} \inflate\colon \KK^\G_*(\C, \C) \to \RKK^\G_*(Z; \C, \C)\\ \cong \K^*(\G \backslash Z)\cong \K^*(B\G).
\end{equation}
see \eqref{equation:inflate}. 

Since the composition of the inflation map and Poincar\'e duality
\[ \KK^\G_*(\C, \C) \xrightarrow{\inflate}\RKK^\G_*(Z; \C, \C) 
\cong \K^*(\G\backslash Z)\xrightarrow{\PD}\KK^\G_{*-d}(C_0(Z), \C)\cong 
\K_{*-d}(\G\backslash Z)=\K_{*-d}(B\G)\]
is exactly the Dirac map, it follows that the higher signature associated to the 
Poincar\'e dual of any class in the range of the 
Dirac map, is homotopy-invariant. 

This applies in particular to the point class 
 \([\pnt]\in \K_0(\G \backslash Z)\) (or \([\ev]\in \KK^\G_0(C_0(Z), \C)\), as we have 
 been denoting it above.)

\begin{corollary}
\label{corollary:higher_sign_for_a_point}
Let \( (\G, Z)\) be a smooth oriented, torsion-free group. Then a Dirac class 
for \(\G\) is a pre-image in \(\KK^\G_{d}(\C, \C)\) of the Poincar\'e dual 
of a point 
\( \in \K^{-d}(\G\backslash Z)\cong \K^{-d}(B\G)\), 
under the inflation map.

 In particular, the higher signature associated to the 
 Poincar\'e dual of a point in \(\G\backslash Z\cong B\G\) is homotopy-invariant as soon 
 as a Dirac class exists for \(\G\). 

\end{corollary}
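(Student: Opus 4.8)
The plan is to read both assertions off the factorisation $\Dirac=\PD\circ\inflate$ of Proposition~\ref{proposition:factoring_Dirac}. Since $(\G,Z)$ is torsion-free, $\G$ acts freely on the $\G$-compact $\K$-oriented manifold $Z$, so $\G\backslash Z$ is a closed $\K$-oriented $d$-dimensional model for $B\G$, and the factorisation specialises to the commuting triangle~\eqref{equation:factoring_the_dirac_map_torsion_free_special}, in which $\PD$ is Poincar\'e duality for $\G\backslash Z$, hence an isomorphism. I will also use the identification, already noted just before the corollary, that under the Morita equivalence $\KK^\G_0(C_0(Z),\C)\cong\K_0(\G\backslash Z)\cong\K_0(B\G)$ the class $[\ev]$ of~\eqref{equation:point_evaluation_class} corresponds to the point class $[\pnt]$: after the Morita identification $\ev$ is simply the evaluation homomorphism $C(\G\backslash Z)\to\C$ at a point, and path-connectedness of $Z$ makes the outcome independent of that point.

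Granting this, the main step is immediate. A class $[\widehat\G]\in\KK^\G_d(\C,\C)\cong\K^d(C^*\G)$ is a Dirac class exactly when $\Dirac([\widehat\G])=[\ev]$ in $\KK^\G_0(C_0(Z),\C)$; substituting $\Dirac=\PD\circ\inflate$ and applying the inverse isomorphism $\PD^{-1}$, this is equivalent to
\[
\inflate([\widehat\G])\;=\;\PD^{-1}([\pnt])\ \in\ \RKK^\G_d(Z;\C,\C)\cong\K^{-d}(\G\backslash Z)\cong\K^{-d}(B\G),
\]
having identified $[\ev]$ with $[\pnt]$ as above. The right-hand side is by definition the Poincar\'e dual of a point, so a Dirac class is precisely a preimage under the inflation map of the Poincar\'e dual of a point; this is the first assertion.

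For the second assertion, suppose $(\G,Z)$ admits a Dirac class. By the equivalence just established, the Poincar\'e dual $\PD^{-1}([\pnt])$ then lies in the range of $\inflate\colon\KK^\G_*(\C,\C)\to\RKK^\G_*(Z;\C,\C)\cong\K^*(B\G)$. The Kaminker--Miller homotopy-invariance theorem~\cite{KM} recalled above states that the higher signature attached to a $\K$-theory class of $B\G$ in the range of the inflation map is a homotopy invariant; applying it to $\PD^{-1}([\pnt])$ yields the homotopy-invariance of the higher signature of the Poincar\'e dual of a point.

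I do not expect a real obstacle here: once Proposition~\ref{proposition:factoring_Dirac} and the invertibility of $\PD$ are in hand the argument is purely formal, and the homotopy-invariance input is quoted from \cite{KM}. The only places needing slight care are consistency of the degree shifts and the identification of $[\ev]$ with the point class under Poincar\'e/Morita duality, both of which are routine and were in effect already used in the paragraph immediately preceding the corollary.
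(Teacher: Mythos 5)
Your argument is correct and is essentially the paper's own: the corollary is read off from the factorisation $\Dirac=\PD\circ\inflate$ of Proposition \ref{proposition:factoring_Dirac} together with the identification of $[\ev]$ with the point class under Morita equivalence, and the homotopy-invariance statement is then exactly the Kaminker--Miller theorem applied to a class in the range of the inflation map, just as in the discussion preceding the corollary. The degree bookkeeping and the role of invertibility of $\PD$ are handled as in the paper, so there is nothing to add.
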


The Poincar\'e dual of a point, and the question of the homotopy-invariance 
of the higher signature associated to it, is studied in \cite{CGM} by Connes, Gromov 
and Moscovici. In 
their paper, they analyze geometric conditions one can put on a group 
under which one can write down a `Dirac class', thus guaranteeing 
homotopy-invariance (for a single cohomology class). 
 The procedure they use to build such `Dirac cycles' (they build more general 
 ones as well), is based on the non-positive curvature idea we exploit below.

Assume that \((\G,Z)\) is a smooth oriented group such that \(Z\) admits a 
\(\G\)-invariant metric of nonpositive curvature. For example, \(\G\) could be a 
discrete subgroup of a connected and semisimple
 Lie group \(G\); if \(K\subset G\) is the maximal 
compact subgroup, then \(G/K\) is a symmetric space of nonpositive curvature 
admitting a \(G\)-invariant (and hence \(\G\)-invariant) orientation. 

The hypotheses say that there is a \(\G\)-equivariant spinor bundle \(S\) on 
\(Z\), and a \(\G\)-equivariant bundle map 
\[ \mathrm{Cliff}(TZ)\otimes S \to S\]
specifying the Clifford multiplication on \(S\). We denote Clifford multiplication 
by a tangent vector \(\xi\) by \(c(\xi)\), so if \(z\in Z\) and \(\xi \in T_zZ\) is a 
unit tangent vector then 
\(c(\xi)\) is a certain self-adjoint 
endomorphism of \(S_z\) with square \(1\). 

For \(z\in Z\) let 
 \(\exp\colon T_{z}Z \to Z\) be the exponential 
map, a diffeomorphism, and \(\log_z \colon Z \to T_{z}Z\) its inverse. Then 
nonpositive curvature implies that \(\log\) is a Lipschitz map, that is, 
\( | \log_z (x) - \log_z (y) | \le d(x,y)\), where \(d\) is the metric on \(Z\) induced 
by the Riemannian metric on \(Z\). Let 
\[ \delta \in \mathrm{End} (S)\]
be the (unbounded) bundle endomorphism of the spinor bundle given by 
\[ \delta (z, \xi) = c\bigl(\log_z (z_0)\bigr)\]
The group \(\G\) acts by unitary bundle 
maps of \(S\), giving the module \(C_0(Z, S)\) of sections the structure of a 
\(\G\)-Hilbert module. Moreover, the multiplier
 \(\delta\) almost-commutes with \(\G\), because \(\log_z\) is Lipschitz. 

The cycle consisting of the \(C_0(Z)\)-module \(C_0(Z, S)\) of sections of \(S\), 
together with the unbounded endomorphism \(\delta\), represents the 
dual-Dirac morphism 
\[ \eta \in \KK_{+d}^\G(\C, C_0(Z)).\]
Restricting it to an orbit gives a concrete model for the composition 
\[ \delta \otimes_{C_0(Z)} [\ev]\in \KK^\G_{+d} (\C, \C).\]
The Hilbert space is \(l^2(\G, S|_\G)\), the unitary group action is 
induced from the initial action on \(S\), and the operator is the 
restriction of \(\delta\) above to the orbit, a Clifford multiplication operator 
on \(l^2(\G, S|_\G)\). 

 It defines a 
spectral triple \( (l^2(\G, V), \pi, \delta)\) for \(C^*(\G)\) (not finitely summable in general, 
see Remark \ref{remark:Dirac_fred}).

\begin{theorem}
\label{theorem:Dirac_class_for_nonpositively_curved_groups}
If \((\G, Z)\) is a smooth oriented group and \(Z\) is equipped with a \(\G\)-invariant 
metric of nonpositive curvature, then the spectral triple \( (l^2(\G, S|_\G),  \delta)\) 
defined above represents a Dirac class \([\widehat{\G}]\) for \(\G\).  

Furthermore, it 
 is defined over the \emph{reduced} C*-algebra of \(\G\) 
and hence there is a reduced Dirac class for \(\G\) in this case, given by the same cycle 
but regarded as defined over \(C^*_r(\G)\). 
\end{theorem}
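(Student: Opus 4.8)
The plan is to verify the two claims in turn: first that the given spectral triple represents a Dirac class, and then that it is in fact defined over $C^*_r(\G)$.

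For the first claim, I would simply assemble Proposition \ref{proposition:dual_Dirac_implies_Dirac} with the explicit description of the dual-Dirac morphism recalled just above the theorem. The non-positive curvature hypothesis gives a concrete cycle for $\eta \in \KK^\G_d(\C, C_0(Z))$: the module $C_0(Z,S)$ of spinor sections together with the unbounded Clifford-multiplication endomorphism $\delta(z,\cdot) = c(\log_z(z_0))$, and I would need to check (or cite) that this cycle does satisfy $[Z]\otimes_\C \eta = 1_{C_0(Z)}$ — this is the content of the Dirac--dual-Dirac method in the non-positively curved case, essentially due to Kasparov, and the Lipschitz estimate $|\log_z(x)-\log_z(y)|\le d(x,y)$ is exactly what makes $\delta$ an almost-$\G$-equivariant multiplier so that the cycle is well-defined. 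Granting that, Proposition \ref{proposition:dual_Dirac_implies_Dirac} says $\eta \otimes_{C_0(Z)} [\ev]$ is a Dirac class, and the restriction-to-an-orbit computation recalled above identifies the underlying cycle of $\eta\otimes_{C_0(Z)}[\ev]$ with exactly $(l^2(\G, S|_\G), \delta)$: the Hilbert space is $l^2(\G, S|_\G)$ with the induced unitary $\G$-action, and the operator is the restriction of $\delta$ to the orbit. So the first assertion is really a bookkeeping exercise in chaining together the Kasparov product formula for the composition with $[\ev]$ and the model already written down.

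For the second claim — that the triple is defined over $C^*_r(\G)$ — the point is that the Hilbert space $l^2(\G, S|_\G) \cong \ell^2(\G)\otimes S_{z_0}$ carries the \emph{regular} representation of $\G$, not just an abstract unitary representation, and hence the representation of $\C\G$ appearing in the cycle extends by continuity to $C^*_r(\G)$ rather than merely $C^*(\G)$. The operator $\delta$ is the diagonal Clifford-multiplication operator $g \mapsto c(\log_{gz_0}(z_0))$, which acts as a (densely defined, self-adjoint) multiplication operator in the $g$-variable and commutes with the right regular representation; its commutators with the left regular representation of $\G$ are bounded by the Lipschitz property, exactly as in the $C^*$-level statement, and none of these estimates change when we complete $\C\G$ in the reduced norm instead. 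So the same algebraic cycle, reinterpreted as a cycle over $C^*_r(\G)$, gives a class in $\KK_{+d}(C^*_r\G, \C)$, and functoriality of the localization map under the quotient $C^*(\G)\onto C^*_r(\G)$ (as set up in the Remark defining the reduced localization map) shows this reduced class maps to $[\ev]$ under the reduced localization map, i.e. is a reduced Dirac class.

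The main obstacle I expect is not in either of the two steps above as stated, but in being careful that the concrete orbit-restriction cycle really is a legitimate (unbounded, or bounded-transformed) Kasparov cycle over $C^*_r(\G)$ — that is, that $\delta$, while unbounded and in general not finitely summable (as flagged in the cited remark), still has the property that $(1+\delta^2)^{-1}$ acts as a compact multiplier on the relevant module and that $[\delta, \pi(f)]$ is bounded for $f$ in a dense subalgebra. This is where the geometry of $Z$ (properness of the $\G$-action, the Lipschitz bound on $\log$, and the fact that $\delta(z, \cdot)$ is a unit Clifford multiplication so $\delta^2 = |\log_z(z_0)|^2$ grows with distance, forcing resolvent compactness after restricting to the discrete orbit) has to be used in a quantitative way. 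Everything else — the identification of the cycle, the compatibility of descent/localization with the reduced completion — is formal once that analytic input is in place, and indeed is already implicitly contained in the discussion preceding the theorem statement.
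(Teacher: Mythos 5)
Your proposal is correct and is essentially the paper's own argument: the proof given there consists precisely of citing Proposition \ref{proposition:dual_Dirac_implies_Dirac} together with the preceding construction of the dual-Dirac cycle and its restriction to an orbit, and the reduced claim rests, as you say, on the orbit representation being (a multiple of) the regular representation together with the fact that pulling back along \(C^*(\G)\to C^*_r(\G)\) recovers the unreduced class, so the reduced cycle maps to \([\ev]\) under the reduced localization map. One incidental slip that is not load-bearing: the diagonal operator \(g\mapsto c\bigl(\log_{gz_0}(z_0)\bigr)\) does \emph{not} commute with the right regular representation (a nonconstant multiplication operator in the \(g\)-variable never does); what actually matters is only the bounded commutators with the left translations (the Lipschitz bound) and resolvent compactness coming from \(|\log_{gz_0}(z_0)| = d(gz_0,z_0)\to\infty\) by properness, both of which you have.
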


\begin{proof}
This follows from Proposition \ref{proposition:dual_Dirac_implies_Dirac} and the 
discussion prior to the statement.

\end{proof}

\begin{remark}
\label{remark:Dirac_fred}
The Fredholm module (defined over \(C_r^*(\G)\), its class is a 
reduced Dirac class) just described is finitely summable if 
\(\G\) has polynomial growth.  
By a result of Connes, the existence of a finitely summable spectral 
triple representing a nonzero class in \(\KK^\G_0(\C, \C))\) 
implies amenability of \(\G\) 
(see \cite{Connes}). 
\end{remark}

\begin{remark} 
A. Connes has described Dirac cycles for discrete groups 
 -- representatives, in our terminology, 
of the Dirac class \([\widehat{\G}]\) -- in his book \cite{Connes}, and again, these 
are the same cycles we describe below. 
\end{remark}

\section{Transverse Dirac classes for smooth actions }
\label{subsection:inflationstuff}
In Section \ref{section:bcloc} we discussed the 
 \emph{transverse Dirac class} \([Z]\in \KK_{-d}(C_0(Z), \C)\) when \((\G, Z)\) is a 
 \(\K\)-oriented group, also pointed out that 
 one can similarly define the transverse 
 Dirac class \([X]\in \KK_{-\dim X} (C_0(X), \C)\) for a smooth manifold admitting a 
 \emph{proper} and \(\K\)-orientation-preserving action of \(\G\), 
 in the form of a direct construction of 
 an analytic cycle for \(\KK\) in the form of the Dirac operator on \(X\) 
 associated to the 
 \(\K\)-orientation. 
 
In this section we are going to extend the definition of such transverse 
Dirac classes to 
non-proper actions, some of which, like the action 
of a surface group \(\G\) on the boundary circle of the hyperbolic plane, leave 
no Riemannian metric invariant, which makes it difficult to construct a 
\(\G\)-equivariant Dirac operator directly. 
Our main goal in this section is to define the problem, in terms of the 
localization map.

For any locally compact group with classifying space \(\EG\) for proper actions, the 
transformation groupoid \(\Grd_\G := \G\ltimes \EG\) is a proper groupoid. A 
proper groupoid with a one-point unit space is a compact group. So proper 
groupoids generalize compact groups. 
The \(\Grd\)-equivariant Kasparov category 
\(\KK^{\Grd}_*\) is defined by Le Gall in \cite{LeGall} for any locally compact groupoid. 
 It is functorial in the groupoid, so the  
homomorphism 
of groupoids \(\Grd_\G\to \G\) by mapping \(\EG\) to a point induces a natural 
map
\begin{equation}
\label{eq:inflation_map}
\inflate:\KK^\G_*(A,B) \to \KK_*^{\Grd_\G}\bigl(C_0(\EG)\otimes A, C_0(\EG)\otimes B\bigr)\end{equation}

This is Kasparov's \emph{inflation map} \eqref{equation:inflate}, after identifying 
\(\KK_*^{\Grd_\G}\bigl(C_0(\EG)\otimes A, C_0(\EG)\otimes B\bigr)\) with Kasparov's 
 \(\RKK^\G_*(\EG; A , B)\) defined in \cite{Kas}.

  \begin{remark}
  \label{remark:inflationisomorphism}
  Isomorphism of the inflation map for 
a given \(\G\) and arbitrary \(A,B\) is implied by the \(\gamma = 1\) version of the 
Baum-Connes conjecture (see \cite{Meyer-Nest:BC}). If 
\(\G\) acts amenably on \(X\) and \(A = C_0(X)\) then
 \eqref{eq:inflation_map} is an isomorphism by 
the Higson-Kasparov-Tu theorem (\cite{Higson-Kasparov}, \cite{Tu}) (these 
authors prove that \(\gamma = 1\)). 
\end{remark}

It is generally easier to direct analytic cycles for \(\RKK^\G\), representing for example 
wrong-way maps \(f!\),  than it is in  
\(\KK^\G\), as the following example illustrates -- it is because inflation results in a 
significant weakening of the equivariance condition on cycles.

\begin{example}
Let \(\Gamma\) be any countable group of
diffeomorphisms of the circle \(\T\). Then the action is
 \(\KK\)-orientably, as we show below, 
and we construct an analytic cycle in 
\(\RKK^\G_{-1}(\EG; C(\T), \C)\)
representing \(p_{\EG, \T}!\).

Assume for 
simplicity that \(\EG\) is \(\G\)-compact and
 can be \(\G\)-equivariant triangulated, fix such a triangulation.

 Identify \(\T = \R/\Z\) with coordinate \(x\). 
  Let \(dx^2\) denote the usual Riemannian metric on the circle. 
 Suppose we change the metric by multiplying by a smooth, 
 positive function \(h \in C^\infty(\T)_+\). The new metric is 
 \(h(x)\cdot dx^2\), it assigns length \(\sqrt{h (x)}\) to the tangent vector 
 \(\frac{\partial}{\partial x}\) at \(x\in \T\). The 
 Dirac operator associated to this
 Riemannian metric and the standard orientation on the circle is then 
 \[D_h :=  h(x)^{-\frac{1}{2}} \cdot D,\;\;\;\; \textup{where}\;\;\; D = -i\frac{\partial}{\partial x}.\]
 \(D_h\) is a self-adjoint elliptic operator on \(L^2(\T, \mu_h)\) where 
 \(\mu_h\) is the measure \(\sqrt{h} \cdot dx\). 
 
 The group \(G\) of diffeomorphisms of \(\T\) acts on the space of Riemannian 
 metrics on \(\T\), and \(g\in G\) pulls the Riemannian metric 
 \( h(x) \cdot dx^2\) to the Riemannian metric 
 \( h( g^{-1}x)g'(g^{-1}x) \cdot dx^2\). 
 Let 
 \[ (U_g\xi )(x) = \xi (g^{-1}x).\]
 Then, for any \(h\), 
  \(U\) extends to a unitary isomorphism 
 \(U\colon L^2(\T, \mu_h) \to L^2(\T , \mu_{g(h)} ),\)
 and 
 \( U_g D_hU_g^* = D_{g(h)},\)
 where
 \[ g(h) (x) = h( g^{-1}x) \cdot g'(g^{-1}x)  ,\]
 corresponding to the pulled-back metric (discussed above), 
 \(\mu\) is Lebesgue measure. 
  
We have described, 
  for every \(h\in C^\infty (\T)_+\), an odd spectral triple 
 \[ (L^2(\T, \mu_h), \pi, D_h),\]
where \(\pi \colon C(\T) \to \Bound \bigl( L^2(\T, \mu_h)\bigr)\) is 
the representation by 
 multiplication operators. 
Furthermore, as observed above,
\(U_g\colon L^2(\T, \mu_h)\to L^2(\T, g_*(\mu_h))\) is a unitary isomorphism
conjugating \(D_h\) to \(D_{g(h)}\).

We now return to the simplicial space \(\EG\). We aim to build a 
\(\G\)-equivariant bundle of spectral triples (for \(C(\T)\) over \(\EG\). 
Start with the vertices \(\EG^{(0)}\). It is a discrete and \(G\)-finite 
discrete space, and divides into \(G\)-orbits. An orbit identifies with 
\(G/H\) where \(H\subset G\) is a finite subgroup of \(G\). Choose any 
\(h \in C^\infty(\T)_+\), determining a Riemannian metric on \(\T\), and 
average it over \(H\) to get an \(H\)-invariant Riemannian metric; in this way
we can assume \(h\in C^\infty(\T)\) gives an \(H\)-invariant metric to begin with. 
This implies that the Dirac operator \(D_h\) is \(H\)-fixed under the translation 
action of \(H\) on \(\T\). We then obtain the family 
\( \{D_{g(h)}\}_{gH \in G/H}\) of spectral triples over \(\T\), so that the 
unitary maps \(U_g\) given by elements of \(g\in G\) 
intertwine them exactly. 

In this way we define our bundle over 
 the vertex set \(\EG^{(0)}\). As we have discussed, the 
 spectral triple at any point is determined completely by the Riemannian 
 metric corresponding to that point, and the conjugation action of \(G\) 
 corresponds to the usual action of \(G\) on Riemannian metrics. The space 
 of metrics (or, equivalently, \(C^\infty(\T)_+\)), is convex. Hence 
 Using barycentric coordinates over a simplex whose vertices are assigned 
 points in a convex space, one can map the simplex into the space. In this 
 way, we obtain a \(G\)-equivariant map \(\EG\to C^\infty (\T)_+\) (where 
 the latter convex space has the \(G\)-action described above,) and hence a 
  family \(\{D_p\}_{p\in \EG}\) of spectral triples over \(C(\T)\), which, as a family, 
  is \(G\)-equivariant.

  This cycle represents 
\[ \pr_{  \T, \EG}! \in \RKK_{-1}^\G(\EG; C(\T), \C)\]
-- see Definition \ref{definition:transverse_Dirac_class} below. 

Note that if a point of \(\EG\) is fixed, corresponding to a given 
\(h \in C^\infty(\T)_+\), the associated spectral triple with operator 
\(D_h\), is homotopic in an obvious sense, to the ordinary Dirac 
operator \(D\) on \(\T\). Along the same lines,
 if the 
 group \(\G\) acts from the beginning by Riemannian isometries of the circle, 
  then one could just form the 
 constant family; this would give a homotopic cycle to the one just discussed.

\end{example}

The papers \cite{EM:Embeddings} and \cite{EM:Geometric_KK}, formalize the 
constructions of wrong-way morphisms \(f!\) in \(\KK^{\Grd}\), where 
\(\Grd\) is any proper groupoid, one of the main motivations being to apply it to 
the current situation. The morphism we have denoted 
\[ \pr_{  \T, \EG}! \in \RKK_{-1}^\G(\EG; C(\T), \C) = \KK^{\Grd_\G}(C_0(\EG\times \T), C_0(\EG))\]
in the above Example is an instance.

The groupoid-equivariant theory of wrong-way maps and
 correspondences 
  is a bundle version, roughly speaking, of the usual theory. 
A \emph{smooth \(\Grd\)-manifold} \(X\) 
is, by definition,
 a bundle of smooth manifolds 
over the base \(\Grd^0\) of \(\G\). Morephisms in \(\Grd\) act by diffeomorphisms 
between fibres. There is a natural notion of smooth \(\Grd\)-equivariant map 
between two \(\Grd\)-manifolds, meaning that it is fibrewise smooth (and equivariant). 
One may similarly define \(\Grd\)-equivariant \(\K\)-orientations, prove the 
\(\Grd_\G\)-equivariant Thom Isomorphism (\cite{LeGall}) and 
\cite{EM:Embeddings},\cite{EM:Geometric_KK} develop a purely topological 
category of \(\Grd\)-equivariant correspondences based on 
these \(\Grd\)-manifolds, and smooth \(\Grd\)-maps between them.

The wrong way elements 
\(f!\in  \KK^\Grd_{\dim Y- \dim (X)} (C_0(X), C_0(Y))\), for smooth 
\(\K\)-oriented \(\Grd\)-maps \(f\) between \(\Grd\)-manifolds \(X\) and \(Y\), 
that we describe 
in this theory, are based on Atiyah's topological index, and use only 
equivariant (fibrewise, equivariant ) 
Thom isomorphisms and (fibrewise, equivariant) open embeddings; the corresponding 
analytic constructions of such morphisms have been studied in the literature for 
a long time, of course, going back to \cite{Connes-Skandalis}. 
 The reference \cite{EM:Dualities} discusses analytic construction of 
 shriek maps in some detail, especially in the groupoid-equivariant setting. 
 The last section of \cite{EM:Geometric_KK} discusses analytic shriek maps 
 and the comparison to topological ones, also in the equivariant setting.

\begin{remark}
In the theory of topological correspondences, in order to define 
wrong-way elements \(f!\in  \KK^\Grd_{\dim Y- \dim (X)} (C_0(X), C_0(Y))\)
where \(\Grd\) is a proper groupoid, we need more in the way of hypotheses. 
What is needed is that \(X\) embeds equivariantly into the 
total space of a \(\Grd\)-equivariant vector bundle over \(\Grd^0\). This is 
analogous to the existence of an embedding of any manifold in a Euclidean space, but 
the equivariance condition in the groupoid-equivariant context is quite strong. 
There are simple examples of proper groupoids which do not embed equivariantly 
in a vector bundle in this sense. (See \cite{EM:Embeddings}.) However, such embeddings 
are guaranteed if \(\Grd\) has a `full vector bundle' over it's base (see 
\cite{EM:Embeddings}) and this is the 
case if \(\Grd = \G\ltimes \EG\) for a discrete group \(\G\) with \emph{co-compact }
\(\EG\), the classifying space for proper actions, 
by a result of L\"uck and Oliver. 
More generally, it is proved in \cite{EM:Geometric_KK} that with the hypothesis, that 
\(Z\) has a full equivariant vector bundle over its base, all three theories: that 
of \(\Grd\)-equivariant correspondences using normally non-singular maps, 
smooth \(\Grd\)-equivariant correspondences (without requiring normal data) ,
and the analytically defined theory \(\KK^\Grd\) of LeGall all give the 
same theory, for smooth \(\Grd\)-manifolds. 

In order to avoid any pathologies, then, we will put a blanket assumption on the 
discrete groups \(\G\) studied in this paper, that \(\G\backslash \EG\) is 
compact. As above, then, the
 transformation groupoid \(\Grd_\G := \G\rtimes \EG\)
is proper and has a full vector bundle on its base, and the above results apply. 
\end{remark}

\begin{definition}
\label{definition:equivariant_orientations}
Let \(X\) a locally compact space with a continuous
 action of the discrete group \(\G\) with \(\G\)-compact 
 \(\EG\). Let 
 \(\Grd_\G:= \EG \rtimes \G\) the corresponding (proper) transformation groupoid.
 a \emph{\(\KK\)-orientation} on the \(\G\)-action on \(X\), 
 is an endowment of 
 the \(\Grd_\G\)-space \(\EG\times \G\) with the structure of a 
 smooth, \(\Grd_\G\)-equivariantly \(\K\)-orientable \(\Grd_\G\)-manifold. 

\end{definition}

 The \(\K\)-orientation assumption posits a \(\Grd_\G\)-equivariant bundle of 
 \(\K\)-orientations on the vertical tangent bundle of the projection \(\EG\times X\to \EG\). Thus, 
a \(\KK\)-orientation in our sense is not a single fixed \(\K\)-orientation, but 
an equivariant bundle of them, over \(\EG\).

The smoothness assumption 
means that we are given a bundle, parameterized 
by the points of \(\EG\), of smooth structures on \(X\), such that a group element 
\(g\in \G\) acts as a diffeomorphism between \(X\) with the smooth structure 
assigned to \(p\in \EG\), to \(X\) with the smooth structure assigned to \(g(p)\).

 In particular, this 
structure endows \(X\) with a manifold structure by including it as a fibre in 
\(\EG\times X\); however, it is not necessarily true that this smooth 
structure is constant as one moves from one fibre to another. This is because the 
 \(\G\)-action on \(X\) can actually fail to be smooth, even if the bundle is smooth 
(boundary actions of hyperbolic groups provide an example, see
 Lemma \ref{lemma:bundlesmooth} and discussion around it).

 If one excludes these examples, 
 there is no reason for purposes of this paper not to assume that the 
 \(\G\)-action on \(X\) was smooth to begin with; then one can of course 
 given \(\EG\times X\) the constant fibrewise smooth structure.

For \emph{compact} groups, \(\EG\) can be taken to be a point, and Definition 
\ref{definition:equivariant_orientations} reduces to the standard 
definition of a smooth, equivariantly (\(\K\)-)oriented manifold \(X\).

\begin{definition}
\label{definition:transverse_Dirac_class}
Let \(\G\) be a discrete group and \(X\) a \(\G\)-equivariantly 
\(\KK\)-orientable manifold
(Definition \ref{definition:equivariant_orientations}.)

A \emph{transverse Dirac class} for \(X, \G\) is a class \[[X] \in 
\KK^\G_{-n}(C_0(X), \C)\]
such that 
\[ \inflate ([X]) = \pr_{X , \EG}!\in \KK^{\Grd_\G}_{-n} \bigl(C_0(X\times \EG), C_0(\EG)\bigr),\]
where 
\(\pr_{X, \EG}!\in \KK^{\Grd_\G}_{-n}\bigl(C_0(X\times \EG), C_0(\EG)\bigr)\) is 
the class of the \(\Grd\)-equivariant fibrewise 
smooth and \(\Grd\)-equivariantly \(\K\)-oriented non-singular 
map \(\pr_{X, \EG}\colon X\times \EG \to \EG\) discussed above, and 
\(\inflate\) is as in \eqref{eq:inflation_map}.

\end{definition}

\begin{example}

If \(\G\) is the trivial group,
 then the class \([X]\in \KK_{-n}(C_0(X), \C) = \K_{n}(X)\) 
of the Dirac operator on \(X\) is a transverse Dirac class.

If \(X\) is a point, \(\G\) an arbitrary locally compact group, then 
the the class \(1  \in \KK^\G_0(\C, \C)\) of the trivial representation of \(\G\) 
is a transverse Dirac class for the action of 
\(\G\) on a point.  

\end{example}

\begin{example}
\label{example:transverse_Dirac_classes_for_propositioner_actions}
If \(\G\) is compact, so \(\EG\) is a point, then one definition of a 
\(\KK\)-orientable action of \(\G\) on \(X\) boils down to the usual 
assumption of a \(\G\)-equviariantly \(\K\)-oriented manifold, and the 
transverse Dirac class \([X]\in \KK^\G_{-\dim X} (C_0(X), \C)\) is 
represented by the usual Dirac cycle discussed in the first paragraph of 
this paper. 

More generally, if \(\G\) acts smoothly and 
\emph{properly} on \(X\), preserving a \(\K\)-orientation in the 
sense of the discussion at the beginning of the paper, then as 
remarked there one can directly construct  a 
\(\G\)-equivariant Dirac operator and class 
\([D_X]\in \KK^\G_{-n}(C_0(X), \C)\). 
Inflating this class gives the cycle for 
\(\RKK^\G_{-n} (\EG; C_0(X), \C)\) consisting of the (constant) bundle 
of Dirac operators along the fibres of \(X\) with respect to the 
(constant) bundle of \(\K\)-orientations. 

 The equality 
\[ \inflate([D_X]) = p_{\EG, X}!\]
and the consequent one that \([D_X]\) is the transverse Dirac class, 
follows from the basic index theorem of Kasparov theory (that the 
analytically and topologically defined shriek maps coincide.)  

Note that in the case of a proper action, the inflation map 
\[ \inflate\colon \KK^\G_*(C_0(X), \C) \to \KK^{\Grd_\G}_*(C_0(X\times \EG), C_0(\EG))
= \RKK^\G_*(\EG; C_0(X), \C)\]
is an isomorphism. Therefore, 
transverse Dirac classes exist and are unique in these cases. 
The same holds if the action is merely amenable. 

\end{example}

\begin{remark}
Hilsum and Skandalis in \cite{HS} define equivariant \(\K\)-orientability of a smooth but 
potentially non-isometric group \(\G\) action on a smooth manifold \(X\), 
using the metaplectic group in place of the 
spin group, and they construct a corresponding analytic morphism 
\[ [X]\in \KK^\G_{-\dim X} (C_0(X), \C)\]
using hypo-elliptic operator theory and a frame bundle construction. It 
would see to us almost certainly true that Hilsum-Skandalis \(\K\)-orientability 
(of a smooth action) implies \(\KK\)-orientability in our sense, but we do not 
address the proof in this article, since it is not particularly 
material for us; we leave it as an open question as to whether transverse Dirac 
classes exist in full generality for smooth actions. 
\end{remark}

If \(\G\) has property \(T\) and a \(\gamma\)-element, then 
the class in \(\KK^\G_0(\C, \C)\) of the \(\gamma\)-element 
maps to \(1\in \RKK^\G_0(\EG; \C, \C)\) under the inflation map, 
and hence the \(\gamma\)-element is a transverse Dirac class for 
\(\G\) acting on a point; however, it is not equal to the class \([\epsilon] \in \KK^\G_0(\C, \C)\) of 
the class of the trivial representation of \(\G\). Thus there is more than one transverse 
Dirac class for a property \(T\) group \(\G\) acting on a point.

\begin{lemma}
\label{lemma:dfahskdfja;sdkfj}
Let \(X\) carry a \(\KK\)-orientable action of \(\Gamma\). Then
the \(\Gamma\)-action on 
\(X\times Z\) acquires an induced \(\KK\)-orientation, 
\(X\times Z\) has a transverse Dirac class \([X\times Z]\), and 
\[ \PD( \pr_{Z, X}! ) = [X\times Z] \in \KK_{-n-d}^\G(C_0(X\times Z), \C),\]
where \(\PD\) is Kasparov's Poincar\'e duality. 
\end{lemma}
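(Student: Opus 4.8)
The plan is to realize $X\times Z$, with its diagonal $\G$-action, as a \emph{proper}, genuinely $\G$-equivariantly $\K$-oriented manifold, and then to evaluate $\PD(\pr_{Z,X}!)$ by splitting $\PD$ into its two constituents — the forgetful map $f\mapsto\overline f$ and Kasparov product with $[Z]$ — and invoking functoriality of wrong-way maps.

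\emph{Step 1: the orientation and existence of $[X\times Z]$.} Since $\G$ acts properly on $Z=\EG$, the diagonal action on $X\times Z$ is proper (the product of a proper action with an arbitrary one is proper; in particular $X\times Z$ is a proper $\G$-manifold whether or not the $\G$-action on $X$ is smooth). By Definition \ref{definition:equivariant_orientations} a $\KK$-orientation of the $\G$-action on $X$ is a $\Grd_\G$-equivariant $\K$-orientation on the vertical tangent bundle of $\EG\times X\to\EG$; because the groupoid $\Grd_\G=\G\ltimes Z$ is \emph{proper} with $\Grd_\G^{0}=Z$, this is exactly the datum of an ordinary $\G$-equivariant $\K$-orientation on the vertical tangent bundle $T^{\mathrm v}$ of the projection $q\colon X\times Z\to Z$ carrying the diagonal action. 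As $TZ$ comes with a $\G$-invariant $\K$-orientation, the splitting $T(X\times Z)\cong T^{\mathrm v}\oplus q^{*}TZ$ makes $X\times Z$ into a proper, $\G$-equivariantly $\K$-oriented manifold (hence also $\KK$-orientable in the sense of Definition \ref{definition:equivariant_orientations}, via the constant bundle over $\EG$). By Example \ref{example:transverse_Dirac_classes_for_propositioner_actions} such a manifold carries a genuine $\G$-equivariant Dirac operator whose class $[X\times Z]\in\KK^\G_{-n-d}(C_0(X\times Z),\C)$ is the (unique, inflation being an isomorphism for proper actions) transverse Dirac class.

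\emph{Step 2: evaluating $\PD$.} The class $\pr_{Z,X}!$ is the $\Grd_\G$-equivariant topological wrong-way class of the fibrewise-smooth, fibrewise $\K$-oriented bundle projection $X\times Z\to Z$; when a transverse Dirac class $[X]$ exists it equals $\inflate([X])$, but it is defined unconditionally. By the description of $\PD$ recalled after \eqref{equation:PD}, $\PD(\pr_{Z,X}!)=\overline{\pr_{Z,X}!}\otimes_{C_0(Z)}[Z]$. Since the topological wrong-way construction uses only Thom isomorphisms and open embeddings, both compatible with the forgetful functor $\KK^{\Grd_\G}\to\KK^\G$ (see \cite{EM:Embeddings}, \cite{EM:Geometric_KK}), one gets $\overline{\pr_{Z,X}!}=q!$, the $\G$-equivariant wrong-way class of the $\K$-oriented submersion $q$ of Step 1. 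Now $[Z]$ is the class of the Dirac operator on the $\G$-cocompact manifold $Z$, equal to $(Z\to\mathrm{pt})!$, and functoriality of wrong-way maps under composition of $\K$-oriented maps — a form of the Atiyah--Singer index theorem in $\KK$-theory, with composite $X\times Z\xrightarrow{q}Z\to\mathrm{pt}$ equal to $X\times Z\to\mathrm{pt}$ — yields
\[
\PD(\pr_{Z,X}!)=q!\otimes_{C_0(Z)}[Z]=(X\times Z\to\mathrm{pt})!=[X\times Z],
\]
the last equality again by Example \ref{example:transverse_Dirac_classes_for_propositioner_actions}. When $[X]$ exists one can alternatively read off $\PD(\pr_{Z,X}!)=\PD(\inflate([X]))=\Dirac([X])=[Z]\otimes_\C[X]$ from Proposition \ref{proposition:factoring_Dirac} and then identify $[Z]\otimes_\C[X]$ with $[X\times Z]$ by the same uniqueness.

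The main obstacle is the identification $\overline{\pr_{Z,X}!}=q!$ in Step 2: one must unwind the definition of the $\Grd_\G$-equivariant wrong-way map as a topological correspondence over $Z$ and check that restricting its equivariance from the groupoid $\G\ltimes Z$ to the group $\G$ returns precisely the $\G$-equivariant correspondence attached to the submersion $q$ — this is exactly what the comparison results of \cite{EM:Geometric_KK} provide. A secondary point needing care is Step 1's passage from the ``bundle of $\K$-orientations over $\EG$'' of Definition \ref{definition:equivariant_orientations} to an honest $\G$-equivariant $\K$-orientation on $X\times Z$, legitimate precisely because $\G\ltimes\EG$ is proper; one should also verify that the compositions and products of analytic Dirac cycles above do compute the asserted Kasparov products, which is where the Kasparov-product axioms enter, as in the proof of Theorem \ref{theorem:dirac_is_fourier_mukai}.
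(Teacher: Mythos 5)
Your proof is essentially the paper's own (one-line) argument: unwind \(\PD\) as the forgetful map \(f\mapsto\overline{f}\) followed by \(\otimes_{C_0(Z)}[Z]\), and identify \(\overline{\pr_{Z,X}!}\otimes_{C_0(Z)}[Z]\) with the Dirac class of the total space by multiplicativity of wrong-way classes, the transverse Dirac class \([X\times Z]\) existing uniquely because the diagonal action is proper and inflation is there an isomorphism. The one point to phrase more carefully is your Step 1: a \(\KK\)-orientation provides in general only a fibrewise smooth, fibrewise \(\K\)-oriented structure on the bundle \(Z\times X\to Z\) (the \(\G\)-action on \(X\) need not be smooth, as for boundary actions), so rather than asserting an honest \(\G\)-equivariant Dirac operator on \(X\times Z\) you should take \([X\times Z]\) in the sense of Definition \ref{definition:transverse_Dirac_class} and perform the shriek-composition in \(\KK^{\Grd_\G}\) before forgetting (or verify the identity after inflating, as in the proof of Proposition \ref{proposition:Dirac_and_transverse}); with that adjustment your Step 2 is exactly the intended argument.
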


This follows from the description of 
\(\PD\) given in the discussion preceding Proposition \ref{proposition:factoring_Dirac}.

\begin{proposition}
\label{proposition:Dirac_and_transverse}
If \( (\G, Z)\) is a smooth, oriented group, and \(\Gamma\) acts 
\(\KK\)-orientably on \(X\), and if  \([X]\in \KK_{-n}^\G(C_0(X), \C)\) 
is a transverse Dirac class for \(X\) and 
\([X\times Z]\) \emph{the} transverse Dirac class for the \(\G\)-manifold 
 \(X\times Z\), 
then 
\[ \Dirac ([X]) = [X\times Z] \in \KK^\G_{-n-d}(C_0(X\times Z), \C).\] 
where \(\Dirac\) is the localization map.  

Moreover, if  \(\G\) is torsion-free, then 
\(\KK^\G_*(C_0(X\times Z), \C)\cong \K_*(X\times_\G Z)\) and with this identification 
\[ \Dirac ([X]) = [X\times_\G Z]\in \K_{n+d}(X\times_\G Z),\]
where \([X\times_\G Z]\) is the 
class of the Dirac operator on \(X\times_\G Z\) (the transverse Dirac 
class of the trivial group acting on \(X\times_\G Z\).) 
\end{proposition}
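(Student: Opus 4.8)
The plan is to deduce the statement from the factorization of the localization map established in Proposition~\ref{proposition:factoring_Dirac}, combined with Lemma~\ref{lemma:dfahskdfja;sdkfj}, which already identifies $\PD(\pr_{Z,X}!)$ with the transverse Dirac class $[X\times Z]$. First I would recall that by Proposition~\ref{proposition:factoring_Dirac} the localization map factors as $\Dirac = \PD\circ\inflate$, so that
\[
\Dirac([X]) = \PD\bigl(\inflate([X])\bigr) \in \KK^\G_{*+d}(C_0(Z)\otimes C_0(X), \C).
\]
Since $[X]$ is a transverse Dirac class for the $\G$-action on $X$, Definition~\ref{definition:transverse_Dirac_class} gives exactly $\inflate([X]) = \pr_{X,\EG}! = \pr_{X,Z}!$ in $\KK^{\Grd_\G}_{-n}\bigl(C_0(X\times Z), C_0(Z)\bigr) = \RKK^\G_{-n}(Z; C_0(X),\C)$, using that $Z$ is a model for $\EG$ under our standing hypotheses on $(\G,Z)$. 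Substituting, $\Dirac([X]) = \PD(\pr_{X,Z}!)$, and Lemma~\ref{lemma:dfahskdfja;sdkfj} identifies the right-hand side with $[X\times Z]$, the transverse Dirac class of the $\G$-manifold $X\times Z$. (One should check the bookkeeping of degrees: $\inflate$ preserves degree, $\PD$ shifts by $d$, and the product manifold $X\times Z$ has dimension $n+d$, so everything lands in $\KK^\G_{-n-d}(C_0(X\times Z),\C)$ as asserted.)

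For the second, torsion-free assertion I would invoke the identification already used repeatedly in the paper: when $\G$ is torsion-free it acts freely and properly on $Z = \EG$, so $X\times Z$ is a free proper $\G$-manifold, the quotient $X\times_\G Z$ is a manifold (the Borel construction), and the standard Morita/descent identification gives
\[
\KK^\G_*(C_0(X\times Z),\C) \cong \KK_*(C_0(X\times_\G Z),\C) \cong \K_{-*}(X\times_\G Z).
\]
Under this identification a $\G$-equivariant transverse Dirac class on a free proper $\G$-manifold corresponds to the ordinary (non-equivariant) Dirac class of the quotient: this is because the $\G$-equivariant $\K$-orientation on $X\times Z$ descends to a $\K$-orientation on $X\times_\G Z$, and the equivariant Dirac cycle descends to the honest Dirac cycle downstairs — formally, this is the statement that the inflation (or rather descent) map sends $[X\times Z]$ to $\pr_{X\times_\G Z}!$ for the trivial group, which is precisely the content of Example~\ref{example:transverse_Dirac_classes_for_propositioner_actions} applied to the proper (here free) action. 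Hence $\Dirac([X])$ corresponds to $[X\times_\G Z] \in \K_{n+d}(X\times_\G Z)$.

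The main obstacle I anticipate is not conceptual but one of compatibility of conventions: making sure that the identification $\RKK^\G_*(Z;A,B) \cong \KK^{\Grd_\G}_{*+d}(C_0(Z)\otimes A, C_0(Z)\otimes B)$ used in the definition of $\inflate$ (Definition~\ref{definition:transverse_Dirac_class}) matches exactly the one used in the factorization Proposition~\ref{proposition:factoring_Dirac} and in the definition of $\PD$, so that the composite $\PD\circ\inflate$ really is the $\Dirac$ of Definition~\ref{definition:Dirac_map} on the nose and not up to a sign or a degree shift. Once the inflation map in Definition~\ref{definition:transverse_Dirac_class} and the one in \eqref{equation:inflate}--\eqref{eq:inflation_map} are confirmed to be the same map (they are, since both are LeGall-functoriality applied to $\Grd_\G\to\G$), the argument is a two-line diagram chase. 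The only other point requiring a word is the passage from equivariant to quotient Dirac classes in the torsion-free case; but since the action is free and proper, $C_0(X\times Z)\rtimes\G$ is Morita equivalent to $C_0(X\times_\G Z)$ compatibly with the Dirac cycles, and this is standard — it is the $\G\backslash(-)$ instance of the Green--Julg / Morita identification already invoked throughout Section~\ref{section:bcloc}.
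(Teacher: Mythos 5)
Your argument is correct, but it is organized differently from the paper's. The paper proves the identity in the form of the multiplicativity statement $[X]\otimes_\C [Z] = [X\times Z]$ by applying the inflation map, which is an \emph{isomorphism} on $\KK^\G_*(C_0(X\times Z),\C)$ because $X\times Z$ is a proper $\G$-manifold, and then reducing the claim to the basic multiplicativity of wrong-way elements, $\pr_{\EG,X}!\otimes_\C \pr_{\EG,Z}! = \pr_{\EG,X\times Z}!$. You instead run the factorization $\Dirac = \PD\circ\inflate$ of Proposition~\ref{proposition:factoring_Dirac} forwards: $\inflate([X]) = \pr_{X,\EG}!$ by the very definition of a transverse Dirac class, and then Lemma~\ref{lemma:dfahskdfja;sdkfj} identifies $\PD(\pr_{Z,X}!)$ with $[X\times Z]$. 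That is exactly the route the paper uses for the \emph{converse} statement (Proposition~\ref{corollary:characterization_of_Dirac_classes}), so your argument in effect establishes both directions at once; its advantage is that it never needs inflation to be an isomorphism on the proper part, only the factorization (which holds unconditionally) plus the $\PD$ computation packaged in Lemma~\ref{lemma:dfahskdfja;sdkfj}, whereas the paper's version trades that Lemma for the functorial multiplicativity of shriek maps. Your handling of the torsion-free addendum (free and proper action, Morita/descent identification of the equivariant Dirac cycle on $X\times Z$ with the Dirac cycle on $X\times_\G Z$) is also correct and supplies a detail the paper's proof leaves implicit; the concern you raise about matching the two incarnations of the inflation map and the degree conventions is legitimate but resolves as you expect, since both are LeGall functoriality for $\Grd_\G\to\G$ and $\PD$ accounts for the shift by $d$.
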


Since \(\Dirac ([X]) = [X]\otimes_\C [Z]\), the external product in 
\(\KK^\G\), the Proposition is the multiplicativity statement for 
transverse Dirac classes 
\begin{equation}
\label{equation:multiplicitivityofdirac}
 [X]\otimes_\C [Z] = [X\times Z]
 \end{equation}
when \(X\) has a transverse Dirac class.

\begin{proof}

We use the inflation map 
\[ \inflate\colon \KK^\G_*(C_0(X\times Z), \cdot) \to \RKK^\G_*(\EG; C_0(X\times Z), \C) \cong \KK^{\Grd_\Gamma}_*(C_0(\EG\times W), C_0(\EG))\]
in this argument, which is an isomorphism since \(X\times Z\) is proper. 

By the discussion in Example \ref{example:transverse_Dirac_classes_for_propositioner_actions}, 
the Dirac 
class we have always denoted \([Z]\in \KK_{-d}(C_0(Z), \C)\), is the same as the 
transverse Dirac class, thus
\(\inflate ([Z]) = \pr_{\EG, Z}!\),  while 
\(\inflate ([X]) = \pr_{\EG, X}!\) for any transverse Dirac class 
\([X]\). Since inflation is an isomorphism respecting external products, 
 \eqref{equation:multiplicitivityofdirac} is equivalent to the 
identity 
\[ \pr_{\EG, X}!\otimes_\C \pr_{\EG, Z}!= \pr_{\EG, X\times Z}!,\]
which is the basic multiplicativity property of shriek maps. 

\end{proof}

The converse holds as well, which is sometimes useful for identifying when 
a given construction has produced a transverse Dirac class \([X]\), since 
in practical terms, external products like 
\([X]\otimes_\C [Z]\) can be described concretely if the constituents are 
described as unbounded cycles. 

\begin{proposition}
\label{corollary:characterization_of_Dirac_classes}
A class \([X]\in \KK^\G_{-n} (C_0(X), \C)\) is a transverse Dirac class if and only if 
\[ [X]\otimes_\C [Z] = [X\times Z]\in \KK^\G_{-n - d} (C_0(X\times Z), \C).\]
 where \([X\times Z]\) is the transverse Dirac class of the proper space \(X\times Z\) (and the 
 left-hand side is the external product in \(\KK^\G\).) 
\end{proposition}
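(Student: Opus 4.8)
The plan is to deduce this characterization directly from Proposition~\ref{proposition:Dirac_and_transverse} together with the factorization of the localization map from Proposition~\ref{proposition:factoring_Dirac}. The forward implication is precisely the multiplicativity statement~\eqref{equation:multiplicitivityofdirac} established in Proposition~\ref{proposition:Dirac_and_transverse}, so there is nothing new to prove there; the content is entirely in the converse. So suppose $[X]\in \KK^\G_{-n}(C_0(X),\C)$ is a class satisfying $[X]\otimes_\C[Z] = [X\times Z]$, i.e. $\Dirac([X]) = [X\times Z]$, where $[X\times Z]$ is \emph{the} (unique) transverse Dirac class of the proper $\G$-manifold $X\times Z$. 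We must show $\inflate([X]) = \pr_{X,\EG}!$ in $\KK^{\Grd_\G}_{-n}\bigl(C_0(X\times\EG),C_0(\EG)\bigr)$.

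First I would invoke Proposition~\ref{proposition:factoring_Dirac}, which gives $\Dirac = \PD\circ\inflate$ with $\PD$ a Poincar\'e duality \emph{isomorphism} $\RKK^\G_*(Z;C_0(X),\C)\xrightarrow{\sim}\KK^\G_{*+d}(C_0(Z)\otimes C_0(X),\C)$. Since $\PD$ is invertible, the hypothesis $\Dirac([X]) = [X\times Z]$ determines $\inflate([X])$ uniquely as $\inflate([X]) = \PD^{-1}([X\times Z]) \in \RKK^\G_{-n}(Z;C_0(X),\C)$. Here I am using the identification $Z = \EG$ coming from the contractibility hypothesis in Definition~\ref{definition:smooth_oriented_group}, so that $\RKK^\G_*(Z;C_0(X),\C)$ is the same as $\RKK^\G_*(\EG;C_0(X),\C) = \KK^{\Grd_\G}_*\bigl(C_0(\EG\times X),C_0(\EG)\bigr)$, the target of the inflation map of Definition~\ref{definition:transverse_Dirac_class}.

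The remaining step is to identify $\PD^{-1}([X\times Z])$ with the topologically defined shriek map $\pr_{X,\EG}!$. This is exactly the content of Lemma~\ref{lemma:dfahskdfja;sdkfj}: that lemma asserts $\PD(\pr_{Z,X}!) = [X\times Z]$, where $\pr_{Z,X}! \in \RKK^\G_{-n}(Z;C_0(X),\C)$ is the shriek map of the fibrewise-smooth, fibrewise-$\K$-oriented projection $X\times Z \to Z$ over the base $Z$, which is nothing but $\pr_{X,\EG}!$ under the identification $Z=\EG$. Applying $\PD^{-1}$ to both sides of the lemma gives $\pr_{X,\EG}! = \PD^{-1}([X\times Z]) = \inflate([X])$, which is precisely the defining condition (Definition~\ref{definition:transverse_Dirac_class}) for $[X]$ to be a transverse Dirac class. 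This closes the argument.

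The one point requiring a little care---and the main obstacle, such as it is---is matching conventions between the three descriptions of the same group: the $\RKK$-picture $\RKK^\G_*(Z;C_0(X),\C)$ used in Proposition~\ref{proposition:factoring_Dirac} and Lemma~\ref{lemma:dfahskdfja;sdkfj}, the groupoid picture $\KK^{\Grd_\G}_*\bigl(C_0(\EG\times X),C_0(\EG)\bigr)$ used in Definition~\ref{definition:transverse_Dirac_class}, and the identification $Z=\EG$ together with its attendant $\K$-orientation bookkeeping (the vertical tangent bundle of $X\times\EG\to\EG$ versus the lifted orientation on $X$). All of these identifications are already in place in the cited results, so the verification amounts to checking that no degree shift or sign has been dropped; in particular one should confirm that the shriek map $\pr_{Z,X}!$ appearing in Lemma~\ref{lemma:dfahskdfja;sdkfj} and the shriek map $\pr_{X,\EG}!$ of Definition~\ref{definition:transverse_Dirac_class} genuinely refer to the same class, which they do since both are the canonical wrong-way element of the fibrewise projection off the $X$-factor, equipped with the given $\Grd_\G$-equivariant bundle of $\K$-orientations.
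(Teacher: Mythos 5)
Your proposal is correct and follows essentially the same route as the paper: both directions are handled by the factorization $\Dirac = \PD\circ\inflate$ of Proposition \ref{proposition:factoring_Dirac} together with Lemma \ref{lemma:dfahskdfja;sdkfj} identifying $\PD(\pr_{Z,X}!)$ with $[X\times Z]$, with the forward implication being the multiplicativity already established in Proposition \ref{proposition:Dirac_and_transverse}. The paper's own proof is just a terser version of the same argument (it phrases the converse as the equivalence, via the invertibility of $\PD$, of $\inflate([X])=\pr_{\EG,X}!$ with $[X\times Z]=\PD(\pr_{\EG,X}!)$).
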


\begin{proof}
Suppose \([X]\in \KK^\G_{-n}(C_0(X), \C)\) satisfies
\[ [X]\otimes_\C [Z] = [X\times Z],\]
that is, suppose that \(\Dirac (]X]) = [X\times Z]\). I claim that 
\(\inflate ([X]) = \pr_{\EG, X}!\). Applying Poincar\'e duality to both sides of this and using 
the commutative 
diagram \eqref{equation:factoring_the_dirac_map} we see that what a wish to prove is 
equivalent to the statement that 
\[ [X\times Z] = \PD (\pr_{\EG, X}!) \in \KK_{-n-d}(C_0(X\times Z), \C).\]
which is the content of Lemma \ref{lemma:dfahskdfja;sdkfj}.

\end{proof}

\section{Dirac classes for actions}

In this section we simultaneously generalize transverse Dirac classes 
for manifolds and Dirac classes for discrete groups, to define Dirac classes 
for a certain class of smooth discrete actions, by requiring that they 
are constituted in a certain sense by 
splicing together a 
Dirac class for the group, and a transverse direct class for the manifold.

\subsection{Definition of the Dirac class}
The following is the main definition of this paper. 

\begin{definition}
\label{definition:Dirac_class}
Let \((\G, Z)\) be a smooth, \(d\)-dimensional \(\K\)-oriented group, and let 
\(\G\) act \(\KK\)-orientably on 
the smooth compact oriented \(n\)-dimensional 
manifold \(X\). 

Let 
\([X]\in \KK^\G_{-n}(C(X), \C)\) be a transverse Dirac 
class for the action. 

Then a
 \emph{Dirac class} for \(\G\ltimes X\) is any 
class 
\[ \fund\in \KK^\G_{d - n} (C_0(X), \C)\]
such that 
\[ \Dirac (\fund) = [\ev]\otimes_\C [X] \in \KK^\G_{-n}(C_0(Z\times X), \C),\]
where \(\Dirac\) is the localization map, \([\ev]\) as in \eqref{equation:point_evaluation_class}.

A \emph{reduced} Dirac class is a class 
\(\fund  \in \K^{ d-n }(C(X)\rtimes_r \G)\) 
which pulls back to a Dirac class under the map on \(\K\)-homology induced by the projection 
\(C(X)\rtimes \G \to C(X)\rtimes_r \G\). 
\end{definition}

If \(\G\) is torsion-free, 
\(\KK^\G_*(C_0(Z\times X), \C) \cong \K_{-*}(Z\times_\G X)\), where 
\(Z\times_\G X\) denotes the quotient of \(Z\times X\) 
by the diagonal action of \(\G\). The smooth manifold \(Z\times_\G X\) is foliated into 
the images of the slices \(Z\times \{x\}\), for \(x\in X\), and, \(Z\times_\G X\) is also 
a bundle of compact manifolds over \(\G\backslash Z\) 
under the coordinate projection 
\[p\colon Z\times_\G X \to \G\backslash Z.\] 
The submanifold \(p^{-1}(\G z_0)\) is a closed transversal \(X_{z_0}\) 
for the foliation, naturally 
diffeomorphic to \(X\), 
for any \(z_0\in Z\).

Let 
\( \ezx\colon X\to Z\times_\G X\) be the corresponding embedding. 
The class 
\[ [\ev]\otimes_\C [X]\in \KK^\G_{-n}(C_0(Z\times X), \C) 
\cong \K_{n}(Z\times_\G X)\] is, by the definitions, equal to
 the class of the 
Baum-Douglas 
cycle 
\begin{equation}
\label{equation:bd_cycle_for_a_point}
 Z\times_\G X \xleftarrow{ \ezx}X\rightarrow \pnt\end{equation}
  from 
\(Z\times_\G X\) to a point, obtained by mapping the \(\K\)-oriented compact manifold 
\(X\) into \(Z\times_\G X\) as a transversal to the foliation described above. 

\begin{proposition}
\label{proposition:transversal_inthemappingcylinder}
Suppose that 
\(\G\) is torsion-free, and \(X\) is smooth and \(\G\)-equivariantly \(\K\)-orientable, 
 Let \(\ezx^*\in \KK_0(C(Z\times_\G X), C(X))\) be the 
class of the *-homomorphism Gelfand dual to the inclusion of  
 \(X\) into \(Z\times_\G X\) as a 
fibre of \(Z\times_\G X \to \G\backslash Z\). 
Let 
\([X]\in \KK^\G_{-n}(X)\) be a 
transverse Dirac class of \(X\) and \([X]'\in \KK_{-n}(C_0(X), \C)\) the 
ordinary Dirac class of \(X\), obtained by forgetting the \(G\)-action. 

Then
 a Dirac class for \(\G\ltimes X\) is any class mapping to 
 \[   \ezx^* \otimes_{C(X)} [X]'\in \KK_{-n}(C_0(Z\times_\G X), \C) = \K_{n} (Z\times_\G X),\] 
 under the composition of the localization map and the 
natural isomorphism 
\( \KK_{-n}(C_0(Z\times_\G X), \C) \cong \K_{n} (Z\times_\G X)\).
\end{proposition}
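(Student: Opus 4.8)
The plan is to reduce the claim to a single identification of Kasparov classes, and then verify that identification by tracking a factorization of \([\ev]\) through descent and the relevant Morita equivalences. By Definition~\ref{definition:Dirac_class}, a class \(\fund\in\KK^\G_{d-n}(C_0(X),\C)\) is a Dirac class for \(\G\ltimes X\) exactly when \(\Dirac(\fund)=[\ev]\otimes_\C[X]\) in \(\KK^\G_{-n}(C_0(Z\times X),\C)\). Since \(\G\) is torsion-free it acts freely and properly on \(Z\), hence freely and properly (diagonally, \(X\) being compact) on \(Z\times X\); therefore \(C_0(Z\times X)\rtimes\G\) is strongly Morita equivalent to \(C_0(Z\times_\G X)\), and this equivalence is exactly what implements the natural isomorphism \(\KK^\G_{-n}(C_0(Z\times X),\C)\cong\KK_{-n}(C_0(Z\times_\G X),\C)=\K_n(Z\times_\G X)\) appearing in the statement. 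So all that has to be proved is that this isomorphism carries \([\ev]\otimes_\C[X]\) to \(\ezx^*\otimes_{C(X)}[X]'\).

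To reach \([\ev]\otimes_\C[X]\) I would first factor \([\ev]\) through the orbit of the chosen basepoint \(z_0\), using that \([\ev]\) is independent of that choice. Write \(\ez\colon\G z_0\hookrightarrow Z\) for the (closed, hence proper) orbit inclusion; then the \(*\)-homomorphism of~\eqref{equation:point_evaluation_class} defining \([\ev]\) is the composite of \(\ez^*\colon C_0(Z)\to C_0(\G z_0)\) with the pointwise-multiplication map \(C_0(\G z_0)\to\Comp(\ell^2(\G))\), so \([\ev]=[\ez^*]\otimes_{C_0(\G z_0)}\gamma\), where \(\gamma\in\KK^\G_0(C_0(\G z_0),\C)\) is the class of this second map. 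By associativity of the Kasparov product (compatibility of internal and external products),
\[
[\ev]\otimes_\C[X]=\bigl[(\ez\times\id_X)^*\bigr]\otimes_{C_0(\G z_0\times X)}\bigl(\gamma\otimes_\C[X]\bigr),
\]
and the computation splits into identifying the two factors. For the right-hand factor, under the canonical isomorphisms \(\KK^\G_{-n}(C_0(\G z_0\times X),\C)\cong\KK_{-n}(C_0(X),\C)\) induced by descent, the standard stabilization \(C_0(\G\times X)\rtimes\G\cong C_0(X)\otimes\Comp(\ell^2(\G))\) for the free transitive \(\G\)-action on the \(\G\)-factor, and Morita equivalence, the element \(\gamma\otimes_\C[X]\) becomes exactly the ordinary Dirac class \([X]'\); this is the familiar fact that descent followed by the trivial-representation map \(C^*(\G)\to\C\) sends the transverse Dirac class \([X]\) to its underlying non-equivariant class, the regular representation Morita-untwisting the equivariant cycle.

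For the left-hand factor, observe that \(\ez\times\id_X\colon\G z_0\times X\hookrightarrow Z\times X\) is a \(\G\)-equivariant closed embedding, so its Gelfand-dual surjection descends to a \(*\)-homomorphism \(C_0(Z\times X)\rtimes\G\to C_0(\G z_0\times X)\rtimes\G\); applying the Morita equivalence of the first paragraph and its analogue \(C_0(\G z_0\times X)\rtimes\G\sim C_0\bigl((\G z_0\times X)/\G\bigr)\) turns this into the restriction \(*\)-homomorphism \(C_0(Z\times_\G X)\to C_0\bigl((\G z_0\times X)/\G\bigr)\). The geometric point is that the diagonal action identifies \((\G z_0\times X)/\G\) with \(X\), and that the image of \(\G z_0\times X\) in \(Z\times_\G X\) is exactly the transversal \(X_{z_0}=p^{-1}(\G z_0)\); under these identifications the restriction map is precisely \(\ezx^*\), the map Gelfand dual to the fibre inclusion \(\ezx\colon X\to Z\times_\G X\). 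Assembling the two factors gives \([\ev]\otimes_\C[X]\mapsto\ezx^*\otimes_{C(X)}[X]'\), as required. As a consistency check, the composition-of-correspondences recipe of Section~\ref{subsec:topcorr} recognizes \(\ezx^*\otimes_{C(X)}[X]'\) as the class of the Baum--Douglas cycle \(Z\times_\G X\xleftarrow{\ezx}X\to\pnt\) --- the cycle \(Z\times_\G X\xleftarrow{\ezx}X\xrightarrow{\id}X\) representing \(\ezx^*\) composed transversally with \(X\xleftarrow{\id}X\to\pnt\) representing \([X]'\) --- which is the description recorded just before the statement. The step I expect to be the real obstacle is this last identification of the left-hand factor: keeping the imprimitivity bimodules and their directions straight so that the descended orbit-inclusion homomorphism lands exactly on \(\ezx^*\) and not on some automorphic twist of it; by contrast the right-hand factor is the routine ``free-proper crossed products untwist equivariance'' computation, and the opening reduction is purely formal.
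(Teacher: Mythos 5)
Your argument is correct and is essentially the paper's own proof: the paper likewise factors \([\ev]\) through the orbit map \(o_{z_0}\colon\G\to Z\) (writing \([\ev]=o_{z_0}^*([\G])\), where your \(\gamma\) is exactly the transverse Dirac class \([\G]\) of the zero-dimensional \(\G\)-manifold \(\G\)), uses multiplicativity to rewrite \([\ev]\otimes_\C[X]=(o_{z_0}\times\id_X)^*([\G\times X])\), and then applies descent together with the free-proper Morita equivalence, under which the orbit inclusion descends to the fibre inclusion \(\ezx\) and \([\G\times X]\) untwists to \([X]'\). Your write-up merely spells out the two descended factors that the paper's terse proof leaves implicit.
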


\begin{proof}
Since \(\G\) itself is a zero-dimensional, equivariantly \(\K\)-oriented \(\G\)-manifold, 
it has a transverse Dirac class \([\G]\in \KK_0^\G(C_0(\G), \C)\), and if 
\(o_{z_0}\colon \G \to Z\) is the orbit map at \(z_0\), then 
\[ [\ev]  = o_{z_0}^*([\G]) \in \KK_0^\G(C_0(Z), \C).\]
is clear. By an obvious case of multiplicativity of transverse Dirac classes, 
\[ [\ev]\otimes_\C [X] = (o_{z_0}\times \id_X)^*([\G\times X])\in \KK^\G_{-n} (C_0(Z\times X), \C),\]
where \([\G\times X]\) is the transverse Dirac classes of the (proper, equivariantly 
\(\K\)-oriented) manifold \(\G\times X\). Applying the descent functor gives the result, 
since \(o_{z_0}\colon \G \to Z\) descends to the inclusion of the point 
\(\G z_0\) in \(\G\backslash Z\). 

\end{proof}


The analogue of proposition \ref{proposition:dual_Dirac_implies_Dirac} holds for 
groupoids as well. Namely, if 
If \( (\G, Z)\) is a smooth oriented group, \(X\) an equivariantly 
oriented \(\G\)-manifold, then a Dirac class \([\widehat{\G\ltimes X}]\) exists
as soon as the groupoid 
\(\G\ltimes X\) has a dual-Dirac morphism 
\( \eta \in \KK_d^{\G \ltimes X}(C_0(X), C_0(Z\times X))\); this is the 
case if \(\G\) itself has one. If the groupoid 
\(\G\otimes X\) has a dual-Dirac morphism, then the 
 Dirac class for \(\G\ltimes X\) 
is unique if  
\(\gamma_{\G\ltimes X} =1_{\G\ltimes X} \in
 \KK^{\G\ltimes X}_0(C_0(X), C_0(X))\), where \(\gamma_{\G\ltimes X}\) is the 
 corresponding \(\gamma\)-element for the groupoid \(\G\ltimes X\). This occurs if 
  \(X\) is a proper, or more generally, an amenable 
\(\G\)-space.

In the latter case, if \(X\) is also compact, then \(\G\) itself has a 
\(\gamma\)-element (and not just the groupoid \(\G\ltimes X\)). 
More generally, if 
\((\G, Z)\) is a smooth, oriented group, \(X\) a smooth oriented 
\(\G\)-manifold, and if  \(\eta \in \KK_{d}^\G(\C, C_0(Z))\) is a dual-Dirac morphism for 
\(\G\), then the groupoid \(\G\ltimes X\) also has a dual-Dirac morphism and 
corresponding Dirac class, given by the formula 
\begin{equation}
\label{equation:Dirac_equation}
 [\widehat{\Gamma\ltimes X}] = (\eta \otimes_\C 1_{C(X)})\otimes_{C_0(Z\times X)} ([\ev]\otimes_\C [X]).
 \end{equation}

More generally, existence of a Dirac class for \(\G\) implies one for 
any action. 

\begin{proposition}
\label{proposition:Dirac_classes_from_external_products}
If \((\G, Z)\) is a smooth oriented group, \([\widehat{\G}] 
\in \KK^\G_{-d}(\C, \C)\) is
 a Dirac class for \(\G\), and if \([X] \in \KK^\G_{-n}(C_0(X), \C)\) is a 
transverse Dirac class then 
\[ [\widehat{\G}] \otimes_\C  [X]\in \KK^\G_{d-n}(C_0(X), \C)\]
is a Dirac class for \(\G\ltimes X\), where the Kasparov product is the external 
product in \(\KK^\G\). 
\end{proposition}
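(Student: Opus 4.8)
The plan is to verify the defining equation of Definition~\ref{definition:Dirac_class} directly, using nothing beyond associativity and symmetry of the external Kasparov product $\otimes_\C$ in $\KK^\G$. By Definition~\ref{definition:Dirac_class}, applied with the given transverse Dirac class $[X]$, the class $[\widehat{\G}]\otimes_\C[X]$ is a Dirac class for $\G\ltimes X$ precisely when
\[ \Dirac\bigl([\widehat{\G}]\otimes_\C[X]\bigr)=[\ev]\otimes_\C[X]\in\KK^\G_{-n}(C_0(Z\times X),\C),\]
so the entire content of the proposition is a single computation of the left-hand side.

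First I would unwind the localization map: by Definition~\ref{definition:Dirac_map}, $\Dirac(\beta)=\beta\otimes_\C[Z]$, where $[Z]\in\KK^\G_{-d}(C_0(Z),\C)$ is the transverse Dirac class of $(\G,Z)$. Since $[Z]$ involves only the variable $Z$ and $[X]$ only the variable $X$, associativity of $\otimes_\C$ together with the symmetry isomorphism $C_0(X)\otimes C_0(Z)\cong C_0(Z)\otimes C_0(X)=C_0(Z\times X)$ give
\[ \Dirac\bigl([\widehat{\G}]\otimes_\C[X]\bigr)=[\widehat{\G}]\otimes_\C[X]\otimes_\C[Z]=\bigl([\widehat{\G}]\otimes_\C[Z]\bigr)\otimes_\C[X]=\Dirac\bigl([\widehat{\G}]\bigr)\otimes_\C[X].\]
Now I would invoke the hypothesis that $[\widehat{\G}]$ is a Dirac class for $\G$: by Definition~\ref{definition:Dirac_class_for_group} this says exactly $\Dirac([\widehat{\G}])=[\ev]\in\KK^\G_0(C_0(Z),\C)$. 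Substituting yields $\Dirac([\widehat{\G}]\otimes_\C[X])=[\ev]\otimes_\C[X]$, which is the required identity, and hence $[\widehat{\G}]\otimes_\C[X]$ is a Dirac class for $\G\ltimes X$ (relative to the given transverse Dirac class $[X]$).

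The only point needing care is the bookkeeping of C*-algebra slots and degrees: one checks that the three factors $[Z]$, $[\widehat{\G}]$, $[X]$ occupy the slots $C_0(Z)$, $\C$, $C_0(X)$ respectively, so that each rearrangement above has domain $C_0(Z)\otimes C_0(X)=C_0(Z\times X)$ and codomain $\C$, and that the degree of $[\widehat{\G}]\otimes_\C[X]$, namely $\deg[\widehat{\G}]+\deg[X]=d-n$, shifts by $\deg[Z]=-d$ under $\Dirac$ to $-n$, matching the degree of $[\ev]\otimes_\C[X]$. I expect no genuine obstacle here — associativity of the Kasparov product does all the work — so in the final text this will be only a few lines. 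As a consistency check, when $\G$ carries a dual-Dirac morphism $\eta$ one may take $[\widehat{\G}]=\eta\otimes_{C_0(Z)}[\ev]$ by Proposition~\ref{proposition:dual_Dirac_implies_Dirac}, and the computation above then recovers, after the standard interchange of internal and external products, the explicit formula \eqref{equation:Dirac_equation}.
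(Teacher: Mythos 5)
Your argument is correct and is exactly the paper's: the paper proves this proposition by noting it is an immediate consequence of associativity (and graded commutativity) of the external Kasparov product, which is precisely the computation $\Dirac([\widehat{\G}]\otimes_\C[X])=([\widehat{\G}]\otimes_\C[Z])\otimes_\C[X]=[\ev]\otimes_\C[X]$ that you spell out. Your version simply makes the bookkeeping of slots and degrees explicit, which the paper leaves implicit.
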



\begin{proof}
The proof is a trivial consequence of associativity of the Kasparov product. 

\end{proof}

\subsection{Groups of nonpositive curvature acting by Riemannian isometries}

 Suppose 
\((\G, Z)\) is a smooth \(\K\)-oriented group with \(Z\) carrying a \(\G\)-invariant 
metric of nonpositive curvature, and suppose that \(\G\) acts by Riemannian 
isometries of \(X\) preserving a \(\K\)-orientation. The Dirac class of \(\G\) is 
represented by the spectral triple 
{\( (l^2(\G, V), \pi, \delta)\) 
of Theorem \ref{theorem:Dirac_class_for_nonpositively_curved_groups}. It is finitely 
summable if \(\G\) has polynomial group (not otherwise). 

Since the \(\G\)-action is assumed isometric on \(X\), the transverse Dirac class 
is also represented by a spectral triple, of the type \( ( L^2(S), \pi, D_X)\), where 
\(D_X\) is the \(\G\)-equivariant Dirac operator associated to the equivariant 
metric and \(\K\)-orientation.

\begin{corollary}
\label{corollary:Dirac_class_for_non_positively_curved_groups_acting_isometrically}
If \( (\G, Z)\) is a smooth \(\K\)-oriented group with \(Z\) carrying a metric of 
nonpositive curvature, and if \(\G\) acts \emph{isometrically} and preserving a 
\(\G\)-orientation on a Riemannian manifold \(X\), then, in the notation above,  
the Dirac class for \(\G\ltimes X\) is represented by the spectral triple 
\( (L^2(S)\hat{\otimes} l^2(\G, V), D_X\hat{\otimes} 1 + 1\hat{\otimes} \delta)\). 

In particular, if \(\G\) has polynomial growth \(\sim n^d\) 
then the (reduced) Dirac class \(\fund\) is represented by a 
\(\dim (X) +d\)-summable spectral triple over \(C_0(X)\rtimes_r \G\).

\end{corollary}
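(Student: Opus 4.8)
The plan is to realise the spectral triple in the statement as the external Kasparov product over $\C$ of two cycles that are already available, and then to invoke Proposition~\ref{proposition:Dirac_classes_from_external_products}. By Theorem~\ref{theorem:Dirac_class_for_nonpositively_curved_groups} the Dirac class $[\widehat{\G}]\in\KK^\G_{d}(\C,\C)$ is represented, already over $C^*_r(\G)$, by the unbounded cycle $(l^2(\G,V),\delta)$ obtained by restricting the dual-Dirac cycle to an orbit; and, since the action on $X$ is isometric and preserves the $\K$-orientation, the transverse Dirac class $[X]\in\KK^\G_{-n}(C_0(X),\C)$ is represented by the genuine $\G$-equivariant cycle $(L^2(S),\pi,D_X)$ built from the invariant metric and $\K$-orientation on $X$, as recalled just before the statement (here $S$ is the spinor bundle of $X$ and $V$ the restriction to an orbit of the spinor bundle of $Z$). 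Because the source algebra of the first factor is $\C$, the external product over $\C$ of these two unbounded cycles is simply their graded sum, with no connection term, namely $\bigl(L^2(S)\,\hat{\otimes}\,l^2(\G,V),\ \pi\,\hat{\otimes}\,1,\ D_X\,\hat{\otimes}\,1+1\,\hat{\otimes}\,\delta\bigr)$, which is exactly the triple in the statement.

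The first thing to verify is that this graded sum really is an unbounded $\G$-equivariant cycle. Self-adjointness and regularity of $D_X\hat{\otimes}1+1\hat{\otimes}\delta$ on the closure of the algebraic domain is the standard (Baaj--Julg) fact for the sum of two graded-commuting affiliated self-adjoint operators living on the two tensor legs. Since $D_X$ and $\delta$ are both odd, the cross-terms in the square cancel and $(D_X\hat{\otimes}1+1\hat{\otimes}\delta)^2=D_X^2\,\hat{\otimes}\,1+1\,\hat{\otimes}\,\delta^2$. Local compactness, that is, compactness of $(\pi(a)\hat{\otimes}1)(1+D_X^2\hat{\otimes}1+1\hat{\otimes}\delta^2)^{-1}$ for $a\in C_0(X)$, then follows by writing the resolvent as $\int_0^\infty e^{-s}\,(e^{-sD_X^2}\hat{\otimes}e^{-s\delta^2})\,ds$: for each $s>0$ the operator $\pi(a)e^{-sD_X^2}$ is compact on $L^2(S)$ by ellipticity of $D_X$ on the compact manifold $X$, while $e^{-s\delta^2}$ is compact on $l^2(\G,V)$ because $\delta^2$ acts on the $[g]$-summand as the scalar $d(gz_0,z_0)^2$ times the identity of the finite-dimensional spinor space $V$, and this scalar tends to infinity as $g\to\infty$ since $\G$ acts properly on $Z$; the integral converges in operator norm. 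The operator $D_X\hat{\otimes}1+1\hat{\otimes}\delta$ is $\G$-invariant up to the almost-equivariance already carried by $\delta$ (because $\log_z$ is Lipschitz). Hence the triple is an unbounded $\G$-equivariant cycle, it represents $[\widehat{\G}]\otimes_\C[X]$, and Proposition~\ref{proposition:Dirac_classes_from_external_products} identifies its class as a Dirac class $\fund$ for $\G\ltimes X$. For the reduced refinement, the $\G$-action on the $l^2(\G)$-leg is the left regular representation, so a routine untwisting of the Fell-absorption type shows that after descent the integrated representation of $C(X)\rtimes\G$ on $L^2(S)\hat{\otimes}l^2(\G,V)$ factors through $C(X)\rtimes_r\G$ --- the same mechanism by which Theorem~\ref{theorem:Dirac_class_for_nonpositively_curved_groups} yields a reduced Dirac class for $\G$ itself --- so the same cycle, read over $C(X)\rtimes_r\G$, represents a reduced Dirac class.

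It remains to read off summability when $\G$ has polynomial growth $\sim n^{d}$. Writing $D=D_X\hat{\otimes}1+1\hat{\otimes}\delta$, the heat trace factorises because $D_X$ and $\delta$ live on different legs:
\[ \Tr\bigl(e^{-tD^2}\bigr)=\Tr\bigl(e^{-tD_X^2}\bigr)\cdot\Tr\bigl(e^{-t\delta^2}\bigr). \]
The first factor is $\sim c\,t^{-\dim X/2}$ by the usual heat expansion on the compact manifold $X$. For the second, $\G$ is finitely generated (it has a $\G$-compact classifying space $Z=\EG$) and acts properly and cocompactly by isometries on $Z$, so by the \v{S}varc--Milnor lemma $d(gz_0,z_0)$ is comparable to the word length $|g|$; polynomial growth of degree $d$ gives $\#\{g:|g|\le R\}\asymp R^{d}$, whence $\Tr(e^{-t\delta^2})=(\dim V)\sum_{g\in\G}e^{-t\,d(gz_0,z_0)^2}\asymp t^{-d/2}$. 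Multiplying, $\Tr(e^{-tD^2})\asymp t^{-(\dim X+d)/2}$, the heat asymptotic of a $(\dim X+d)$-summable spectral triple; equivalently $(1+D^2)^{-s/2}$ is trace class for $s>\dim X+d$. The only place calling for genuine analysis is the verification that the naive graded sum is a valid unbounded cycle computing the external product --- Baaj--Julg regularity together with the local-compactness estimate above; granting the two input cycles (from Theorem~\ref{theorem:Dirac_class_for_nonpositively_curved_groups} and from the isometric case), everything else is bookkeeping with graded tensor products, Fell absorption, and heat kernels.
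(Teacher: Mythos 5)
Your proposal is correct and follows essentially the same route as the paper: the paper's proof is exactly the appeal to Proposition \ref{proposition:Dirac_classes_from_external_products} together with the standard Baaj--Julg recipe for external products of unbounded cycles, which you have simply carried out in detail. The extra verifications you supply (regularity and local compactness of the graded sum, Fell absorption for the reduced statement, and the heat-trace computation giving $(\dim X+d)$-summability under polynomial growth) are sound and fill in steps the paper leaves implicit.
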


\begin{proof}
The proof is an immediate consequence of Proposition 
\ref{proposition:Dirac_classes_from_external_products} and the standard recipe for 
taking external products of unbounded cycles in \(\KK^\G\). [** include reference ]

\end{proof}

\begin{example}
\label{example:dirac_of_finite_group_action}
The Dirac class of a \(\K\)-orientation preserving action of a \emph{finite} 
group \(\G\) on \(X\) is represented by the spectral triple 
 \( (L^2(S)\otimes l^2(\G), D_X\hat{\otimes} 1)\), with 
\(D_X\) the \(\G\)-equivariant Dirac operator on \(X\), since \(Z\) is a 
point in this case. 

\end{example}

\subsection{Irrational rotation}

The Dirac class of the irrational rotation algebra \(A_\theta := C(\T)\times_\theta \Z\) is 
represented by a spectral triple first described by A. Connes: let 
\(\tau \colon A_\theta \to \C\) be the standard trace, \(L^2(A_\theta)\) the 
GNS Hilbert space associated to \(\tau\). The algebra \(A_\theta\) is represented
on \(L^2(A_\theta)\) by left multiplication, and the two densely defined derivations 
\[ \delta_1 (\sum_{n\in \Z}  f_n [n]) 
:= \sum_{n\in \Z}  f_n'[n],\;\;\; 
\delta_2 (\sum_{n\in \Z}  f_n[n])  :=  \sum_{n\in \Z} n f_n[n],\]
 using group-algebra notation.  They assemble to a densely defined self-adjoint 
 operator 
 \[ \dol_\theta:= \begin{bmatrix} 0 & \delta_1 -i\delta_2\\\delta_1+i\delta_2 & 0 \end{bmatrix},\]
on \(L^2(A_\theta)\oplus L^2(A_\theta)\), 
 a deformation of the Dolbeault operator \(\dol_{\T^2}\) on \(\T^2\).

Note that, \emph{as a (unbounded) operator on a Hilbert space}, 
\(\dol_\theta\) is absolutely identical to the 
ordinary Dolbeault operator \(\dol_{\T^2}\) operating with its usual initial domain of 
smooth functions in
 the graded Hilbert space \(L^2(\T^2)\oplus L^2(\T^2)\). 
Thus as far as the \emph{operator} is concerned, we are dealing with a classical operator. 
However, the dynamics is encoded by the representation of \(A_\theta\), so that the 
noncommutative aspect of this spectral triple lies entirely in the representation of 
\(A_\theta\) involved in it by the GNS procedure. To be explicit, identifying 
\(L^2(A_\theta)\) with \(L^2(\T^2) \cong l^2(\Z^2)\) then 
the representation of 
\(\pi \colon A_\theta = C(\T)\rtimes \Z \to \Bound\bigl( L^2(\T^2)\oplus L^2(\T^2)\bigr), \)
is the one determined by the covariant pair 
\[ \pi (f) \xi (x,y) = f(x)\xi(x,y), \;\;\; ( n\cdot \xi) (x,y) =  e^{2\pi i ny}  \, \xi\bigl( R_\theta^{-n}(x), y).\]

\begin{remark} (\emph{\(\Z\)-actions}.)
More generally, suppose that \(\Z\) acts on a complete Riemannian manifold \(X\), 
isometrically, and preserving a \(\K\)-orientation in the sense that there is a 
Hermitian \(\Z\)-equivariant spinor bundle \(S_X\) over \(X\), graded or ungraded 
or \(p\)-multigraded (if one is working over the reals), and a \(\Z\)-equivariant 
connection on \(S_X\) and associated \(\Z\)-equivariant Dirac operator 
\[ D_X\colon L^2(S_X) \to L^2(S_X)\]
determining the transverse Dirac class \([X]\in \KK^\Z_{-n}(C_0(X), \C)\).

Following the recipe of Corollary \ref{corollary:Dirac_class_for_non_positively_curved_groups_acting_isometrically}
we form an external product and get the following explicit representative of 
\([\widehat{\Z\ltimes X}] \in \KK_{-n+1} (C_0(X)\rtimes \Z, \C)\). We assume 
for simplicity that \(\dim X\) is odd, and that \(S_X\) is ungraded, \(D_X\) self-adjoint. 
Then the Hilbert space of our spectral triple consists of two copies of
 \(L^2(S_X)\otimes l^2(\Z) \) with standard even grading, and the operator 
 \[ \begin{bmatrix} 0 & D_X\otimes 1 + i (1\otimes \delta) \\
  D_X\otimes 1 - i (1\otimes \delta) & 0 \end{bmatrix}\]
  where \(\delta\) is the number operator on \(l^2(\Z)\). The representation of 
  \(C(X)\rtimes \Z\) on this Hilbert space is by 
  \[ f(\xi \otimes e_k) := f\xi \otimes e_k, \;\;\; m(\xi\otimes e_k) := m(\xi)\otimes e_{k+m}.\]

This of course directly generalizes the irrational rotation example.

\end{remark}

We next consider an action of  \(\Z^2\)  preserving a \(\K\)-orientation on the 
compact manifold \(X\) of dimension \(n\).

If \(n\) is odd, the spinor bundle 
\(S_X\) for \(X\) is ungraded; let \(D_X\) be the corresponding Dirac operator. 
Let \(\dol\) be the Dolbeault operator on the \(\Z/2\)-graded Hilbert space 
\(L^2(\T^2) \oplus L^2(\T^2)\). Then the Dirac class of the action is represented by 
the following odd-dimensional spectral triple over \(C(X)\rtimes \Z^2\). The 
Hilbert space is  
 \(L^2(\T^2)\otimes L^2(S_X) \oplus L^2(\T^2)\otimes L^2(S_X)\)
with \emph{no} grading, and the operator with respect to this decomposition is 
\begin{equation}
\label{equation:Dirac_Zd_on_tori_n_odd}
 \begin{bmatrix} 0 & \dol_{\T^2}\otimes 1 + i \, (1\otimes D_X) \\
 \dol_{\T^2}\otimes 1 - i \, (1\otimes D_X) & 0 \end{bmatrix}.
 \end{equation}
 The action of \(C(X)\) is by letting \(f\in C(X)\) act by a multiplication operator 
 in the \(L^2(S_X)\) factor. The group \(\Z^2\) acts diagonally, with the 
 implicitly assumed unitary action on sections of the spinor bundle \(S_X\), and 
 the action of \(\Z^2\) on \(L^2(\T^2)\) given by the Fourier transform 
 \[ (g\cdot \xi ) (\chi) := \chi (g) \xi (\chi), \;\;\; \chi \in \widehat{\Z^2} \cong \T^2.\]

This spectral triple is clearly \(2+n\)-summable. 

If \(n\) is even, \(S_X\) graded into \(S_X^0\oplus S_X^1\), then the Dirac 
class is represented by the odd operator 
\begin{equation}
\label{equation:Dirac_Zd_on_tori_n_even}
 \begin{bmatrix} 0 & \dol_{\T^2}\otimes 1 + 1\otimes D_X^1\\
\dol_{\T^2}\otimes 1 + 1\otimes D_X^0 & 0 \end{bmatrix}
\end{equation}
on the \(\Z/2\)-graded Hilbert space 
with even part 
\[ L^2(\T^2)\otimes L^2(S_X^0)\, \oplus \, L^2(\T^2)\otimes L^2(S_X^1)\]
and odd part 
\[L^2(\T^2)\otimes L^2(S_X^1)\, \oplus \, L^2(\T^2)\otimes L^2(S_X^0).\]
Here 
\(D_X^0:= D_X|_{L^2(S_X^0)}\colon L^2(S_X^0) \to L^2(S_X^1)\), and 
\(D_X^1:= D_X|_{L^2(S_X^1)}\colon L^2(S_X^1) \to L^2(S_X^0)\).

The action of \(C(X)\rtimes \Z^2\) is as before. 

This Dirac spectral triple is \(n+2\)-summable, of course.

\subsection{Dirac classes and inflation}

We aim to compute 
\[\inflate (\fund) \in \RKK^\G_{-d-n} (Z; C_0(X), \C),\]
equivalently, of 
 \(\PD^{-1}([\ev]\otimes_\C [X]) \)
where, as usual, \((\G, Z)\) is a \(\K\)-oriented group, acting 
\(\KK\)-orientably on \(X\). Assume \([X]\) is a transverse Dirac class for the action, 
and \(\fund\) a Dirac class. 
Recall that 
\[ \RKK^\G_{-d-n} (Z; C_0(X), \C)\cong \KK^{\Grd_\G}_*(C_0(Z\times X), C_0(Z)),\]
so that we might expect a description of 
\(\inflate (\fund)\) in the form of a \(\Grd_\G\)-equivariant correspondence from 
\(Z\times X \) to \(Z\) -- that is, a bundle of Baum-Douglas cycles for \(X\), 
parameterized by the points of \(Z\). As it turns out, \(\inflate (\fund)\) is represented 
by a piece of geometric data which generalizes slightly the data involved in a 
\(\Grd_\G\)-equivariant correspondence in the sense of \cite{EM:Geometric_KK}. We 
will call it a `(smooth, \(\Grd_\G\)-equivariant...) correspondence with 
singular support.' 

Suppose that \(i_M \colon M \to Z\) is a closed, smooth, 
\(\G\)-invariant, co-dimension \(d\) submanifold of \(Z\) with \(\G\)-equivariantly 
\(\K\)-oriented normal bundle \(\pi \colon \nu  \to Z\). Let \(\varphi \colon \nu \to Z\) 
the associated tubular neighbourhood embedding onto an open \(\G\)-invariant 
neighbourhood \(U\) of \(M\). The \(\G\)-equivariant \(\K\)-orientation on 
\(\nu\) determines a Thom class 
\[\xi_\nu \in \KK^{\Grd_\G}_{+d}(C_0(Z), C_0(Z))\cong \RKK^\G_{+d}(Z; \C, \C).\]
The corresponding cycle has Hilbert \(C_0(Z)\)-module sections \(C_0(U, S')\), where 
\(S'\) is the bundle over \(U\) defined by pulling back the 
the spinor bundle 
\(S\to M\) for \(\nu\), to a bundle over \(\nu \cong U\), and operator 
a Clifford multiplier 
\(F\) such that \(f (F^2-1)\) is a \(C_0\)-bundle endomorphism of \(S'\). In terms of 
correspondences, this is the class of the smooth, \(\Grd_\G\)-equivariant 
correspondence 
\begin{equation}
\label{equation:e8r9tu934hernkjfnvoieur}
 Z \leftarrow  (U, \varphi!( \xi_\nu)) \rightarrow  Z,\end{equation}
from \(Z\) to \(Z\), where the arrows designate the open inclusion. 

Since, however, the construction only depends on the \(\G\)-equivariant 
\(\K\)-oriented embedding \(i_M \colon M \to Z\), we will \emph{define} 
\begin{equation}
\label{equation:stupiddefinition}
 i^* \otimes_M i! \in \KK^{\Grd_\G}_{+d}(C_0(Z), C_0(Z)) = \RKK^\G_{+d}(Z; \C, \C)
 \end{equation}
to be the class of the correspondence \eqref{equation:e8r9tu934hernkjfnvoieur}, and 
refer to it as a correspondence \emph{with singular support}. We will 
also therefore also refer to a 
 diagram 
\[ Z\xleftarrow{i_M} M \xrightarrow{i_M} Z\]
a `correspondence with singular support'; the corresponding class is 
given by \eqref{equation:stupiddefinition}.

The entire discussion goes through if \(i_M \colon M \to Z\) is merely assumed a 
\(\G\)-equivariant, equivariantly \(\K\)-oriented immersion, as the reader may easily 
confirm.

The main reason we want to discuss correspondences with singular support is that 
the class we aim to describe has this form. 

Choose \(z_0 \in Z\), let \(\ez\colon \G \to Z\) be the orbit map at \(z_0\). 
It is a smooth, \(\G\)-equivariantly \(\K\)-oriented immersion. The normal bundle 
is \(\cong \ez^*(TZ)\), using the usual tubular neighbourhood 
embedding of the form  
\[ \varphi (g, \xi) := \exp_{gz_0}(\xi'),\]
where \(\xi \mapsto \xi'\) is an appropriate re-scaling of tangent vectors 
(\emph{e.g.} by \(\xi':= \frac{\epsilon \xi}{(1+|\xi |^2)^{\frac{1}{2}}}\)) into 
an open disk sub bundle of the tangent bundle, on which the Riemannian 
exponential map is a diffeomorphism. 

The \(\G\)-equivariant map \(\ez\colon \G \to Z\) gives \(\G\) the structure of a 
\(\Grd_\G\)-space, where \(\Grd_\G := \G \ltimes Z\) as before, and 
\begin{equation}
\label{equation:even_more_pissing_around_with_correspondences_and_duality_1}
Z \xleftarrow{\ez}  \G \xrightarrow{\ez} Z.
\end{equation}
is a \(\Grd\)-equivariant correspondence with singular support in the sense discussed above,  
from \(Z\) to \(Z\), 
with class
\[\ez^*\otimes_{C_0(\G)}\ez!\in 
\KK^{\Grd_\G}_{d}(C_0(Z), C_0(Z))  = \RKK^\G_{d} (Z; \C, \C) \]
where \(\ez^*\in \KK^{\Grd_\G}_0(C_0(Z), C_0(\G)\) is the 
class of the induced *-homomorphism. 

Similarly, if \(X\) is a smooth, compact manifold carrying a 
\(\KK\)-orientation preserving action of \(\G\), then 
\begin{equation}
\label{equation:transverseKclass}
 Z\times X \xleftarrow{\ez\times \id_X} \G\times X \xrightarrow{\ez \circ \pr_\G } Z
 \end{equation}
is a \(\Grd_\G\)-equivariant correspondence with singular support from \(Z\times X\) to \(Z\),
 with class
\[ (\ez\times \id_X)^*\otimes_{C_0(Z)} ( \ez\circ \pr_\G) ! \in \KK^{\Grd_\G}_{ d - n}(C_0(Z\times X), C_0(Z))
= \RKK^\G_{d - n} (Z; C_0(X), \C).\]


\begin{proposition}
\label{proposition:description_of_inflated_class}
If \( (\G, Z)\) is a smooth oriented group, \(X\) an equivariantly oriented \(\G\)-manifold, 
\([\widehat{\G\ltimes X}])\) a Dirac class for the action, then 
\[ \inflate([\widehat{\G\ltimes X}]) 
=  (\ez\times \id_X)^*\otimes_{C_0(Z)} ( \ez\circ \pr_\G) !
\in \RKK^\G_{d -n}(Z; C_0(X), \C).\]
\end{proposition}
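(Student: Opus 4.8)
The plan is to chase through the factorization of the localization map from Proposition~\ref{proposition:factoring_Dirac}, which says $\Dirac = \PD \circ \inflate$. Since $\PD$ is always an isomorphism, the defining equation of a Dirac class, $\Dirac(\fund) = [\ev]\otimes_\C [X]$, is equivalent to
\[
\inflate(\fund) = \PD^{-1}\bigl([\ev]\otimes_\C [X]\bigr) \in \RKK^\G_{d-n}(Z; C_0(X),\C).
\]
So the whole statement reduces to computing $\PD^{-1}$ applied to the Baum--Douglas class of the fibre, and showing it equals the class of the correspondence with singular support~\eqref{equation:transverseKclass}. The key identity $[\ev] = \ez^*([\G])$ (where $[\G]\in\KK^\G_0(C_0(\G),\C)$ is the transverse Dirac class of the zero-dimensional proper $\G$-manifold $\G$), already used in the proof of Proposition~\ref{proposition:transversal_inthemappingcylinder}, together with multiplicativity of transverse Dirac classes, gives
\[
[\ev]\otimes_\C [X] = (\ez\times \id_X)^*\bigl([\G\times X]\bigr)\in \KK^\G_{-n}(C_0(Z\times X),\C),
\]
with $[\G\times X]$ the transverse Dirac class of the proper, equivariantly $\K$-oriented $\G$-manifold $\G\times X$.

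Next I would apply $\PD^{-1}$. By formula~\eqref{equation:pd1}, $\PD^{-1}(f) = \Theta\otimes_{C_0(Z)}\inflate(f)$, so I need $\inflate$ of the class above. Because $\G\times X$ is proper, its transverse Dirac class is \emph{the} transverse Dirac class in the sense of Definition~\ref{definition:transverse_Dirac_class} (uniqueness for proper actions, Example~\ref{example:transverse_Dirac_classes_for_propositioner_actions}), hence $\inflate([\G\times X]) = \pr_{\G\times X,\EG}! = \pr_{\G\times X, Z}!$, the shriek map of the fibrewise-smooth $\K$-oriented projection $Z\times\G\times X\to Z$. Pulling back along $(\ez\times\id_X)$ and composing with $\Theta$ — which is, concretely, the class of the correspondence $Z\xleftarrow{\id}Z\xrightarrow{\delta_Z}Z\times Z$ — becomes a purely geometric computation in the category of smooth $\Grd_\G$-equivariant correspondences with singular support. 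One composes $Z\xleftarrow{\id}Z\xrightarrow{\delta}Z\times Z$ with the bundle-of-points correspondence $Z\times Z\leftarrow Z\times\G\times X\to Z$ (where the left leg is $\id_Z\times(\ez\times\id_X)$ adjusted to land in $Z\times Z$ via $\delta$ in the first factor and the projection is $\ez\circ\pr_\G$ in the second), and the fibre product collapses, by the transversality recipe of Section~\ref{subsec:topcorr}, to exactly
\[
Z\times X \xleftarrow{\ez\times\id_X} \G\times X \xrightarrow{\ez\circ\pr_\G} Z,
\]
i.e.\ the correspondence~\eqref{equation:transverseKclass}, with its singular support along the orbit $\G z_0\subset Z$. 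Tracking the $\K$-orientations through the composition gives the stated class $(\ez\times\id_X)^*\otimes_{C_0(Z)}(\ez\circ\pr_\G)!$.

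The main obstacle I expect is the bookkeeping in the fibre-product composition: the shriek map $\pr_{\G\times X, Z}!$ is a bundle of Dirac operators on the fibres $\G\times X$ of a submersion, whereas the final answer is packaged as a correspondence \emph{with singular support} (an immersion $\ez\colon\G\to Z$ with its tubular neighbourhood), so one must verify carefully that composing with $\Theta$ (the diagonal) converts the submersion-type shriek of the Borel-space projection into the immersion-type shriek $\ez!$ of the orbit map — this is essentially the statement that pulling the diagonal correspondence back along a map and then pushing forward recovers that map's graph, combined with the Atiyah--Singer-type equality $f_{\textup{an}}! = f!$ relating the analytic and topological shriek maps. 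Once the composition is set up correctly, compatibility of inflation with external products and the associativity of Kasparov product do the rest; the argument is otherwise a routine diagram chase, entirely parallel to the proof of Proposition~\ref{corollary:characterization_of_Dirac_classes} and Lemma~\ref{lemma:dfahskdfja;sdkfj}.
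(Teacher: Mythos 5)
Your proposal is correct and follows essentially the same route as the paper's proof: reduce via Proposition \ref{proposition:factoring_Dirac} to computing \(\PD^{-1}([\ev]\otimes_\C [X])\) as \(\Theta\otimes_{C_0(Z)}\inflate(\cdot)\), represent the inflated class by the correspondence with middle space \(Z\times\G\times X\) (passing to the tubular-neighbourhood picture so the singular-support left leg becomes an honest correspondence), and compose with the diagonal correspondence \(\Theta\) by transversality. The only cosmetic difference is that you first rewrite \([\ev]\otimes_\C [X]\) as \((\ez\times\id_X)^*([\G\times X])\) and inflate the transverse Dirac class of the proper manifold \(\G\times X\), whereas the paper inflates \([\ev]\) and \([X]\) separately and uses compatibility of inflation with external products; both land on the same correspondence \(Z\times Z\times X\leftarrow Z\times\G\times X\to Z\) and the same final transversality computation, including the point you correctly flag about converting the orbit immersion into the correspondence with singular support.
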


\begin{proof}
We need to show that 
 \(\PD^{-1}([\ev]\otimes_\C [X]) 
  (\ez\times \id_X)^*\otimes_{C_0(Z)} ( \ez\circ \pr_\G) \), where 
\[\PD^{-1}\colon \KK^\G_*(C_0(Z\times X), \C) \xrightarrow{\cong} \KK^{\Grd_\G}_{*+d}(C_0(Z\times X), C_0(Z))  \cong \RKK^\G_{*+d} ( Z; C_0(X), \C)\] is Poincar\'e duality. 

The map \(\PD^{-1}\) 
is the composition of inflation
\[ \inflate\colon \KK^\G_*(C_0(Z)\otimes A, B)
\to \RKK^\G_*(Z; C_0(Z)\otimes A,B),\]
with Kasparov product in \(\RKK^\G(Z)\) with a class 
\( \Theta \in \RKK^\G_{d}(Z; \C, C_0(Z))\). The class \(\Theta\) is the 
class of the \(\Grd_\G\)-equivariant correspondence 
\begin{equation}
\label{equation:delta}
 Z \xleftarrow{\id} Z \xrightarrow{\delta} Z\times Z,
 \end{equation}
where the momentum map for the \(\Grd\)-space \(Z\times Z\) is 
in the first variable, and 
\(\delta_Z\colon Z \to Z\times Z\) be the diagonal map.

By definition, \(\inflate([X]) \in \RKK^\G_{-n}(Z; C(X), \C)\) is 
represented by the \(\Grd_\G\)-equivariant correspondence 
\[ Z\times X\xleftarrow{\id} Z\times X  \xrightarrow{\pr_Z}Z\]
and since \(\inflate\) is compatible with external products, 
\[\inflate([\ev]\otimes_\C [X]) = \inflate([\ev])\otimes_{Z, \C} \inflate([X])
\in \RKK^\G_{-n}(Z; C(X), \C)\] is 
represented by the \(\G\ltimes Z\)-equivariant correspondence (with singular support) 
\begin{equation}
\label{equation:pissing_around_with_correspondences_and_duality_3}
 Z\times Z\times X\xleftarrow{\id_Z\times \ez\times \id_X}Z\times \G\times X \xrightarrow{\pr_Z} Z.
 \end{equation}
 where the momentum map for \(Z\times Z\times X\) is projection to the first coordinate, 
 or, more precisely, 
 the class of the ordinary \(\Grd_\G\)-equivariant correspondence 
\begin{equation}
\label{equation:pissing_around_with_correspondences_and_duality_3}
 Z\times Z\times X\xleftarrow{\id_Z\times \varphi \times \id_X}   
 (Z\times \nu \times X, \xi_\nu)  \xrightarrow{\pr_Z} Z
 \end{equation}
with \(\nu \) the normal bundle to the orbit, that is \(\nu =  \ez^*(TZ)\), and 
\(\varphi \colon \nu \to Z\) the tubular neighbourhood embedding. Since 
\(\varphi \times \id_X\) is a submersion, one can compose 
\eqref{equation:pissing_around_with_correspondences_and_duality_3} with 
\eqref{equation:delta} by transversality. The result is bordant to 
 \[ Z\xleftarrow{\ez\circ \pi} ( \ez^*(TZ), \ez^*(\xi_{TZ})) \xrightarrow{\ez\circ \pi} Z\]
 where \(\pi \colon \ez^*(TZ) \to \G\) is the vector bundle projection, as claimed.

 \end{proof}

The case where \(\G\) is torsion free and hence acts freely on \(Z\) can be 
expressed and proved more simply. 

Since \(Z\times X\) is a \(\Grd\)-equivariantly \(\K\)-oriented bundle of smooth 
manifolds, by hypothesis,
 \(\Grd_\G\)-equivariant Poincar\'e duality holds and gives an 
isomorphism 
\begin{equation}
 \RKK^\G_* (Z;  C_0(X), \C) \cong \RKK^\G_{*+n}\bigl(Z; \C, C_0(X)\bigr)
\end{equation}
and composing this with the generalized Green-Julg isomorphism and a 
standard Morita equivalence identifies 
\(\RKK^\G_{*+n}\bigl(Z; \C, C_0(X)\bigr)\) with \(\K^{-*-n}(Z\times_\G X)\), the 
\(\K\)-theory of the mapping cylinder. 

 The composition
\begin{multline}
\K_{-*}(Z\times_\G X) 
=  \KK^\G_*(C_0(Z\times X), \C) \xrightarrow{\PD^{-1}} \RKK^\G_*(Z; C_0(X), \C) \\
\cong \RKK^\G_{*+n} (Z; \C, C_0(X)) \cong \K^{-*-n}(Z\times_\G X)
\end{multline}
is ordinary Poincar\'e duality for \(Z\times_\G X\). By 
Proposition \ref{proposition:description_of_inflated_class} we obtain the 
following.

\begin{corollary}
If \(\G\) is torsion-free, then \(\G\ltimes Z\)-equivariant Poincar\'e duality for 
\(X\)  
\[\RKK^\G_*(Z; C_0(X), \C) \to  \K^{-*-n}(Z\times_\G X)\] 
maps \(\inflate  ([\widehat{\G \ltimes X}] \) to the class in \(\K^{-d} (Z\times_\G X)\) of the  
correspondence 
\[ \pnt \leftarrow X  \xrightarrow{ \ezx}  Z\times_\G X,\]
where \(\ezx \colon X \to Z\times_\G X\) is the inclusion of the fibre \(X\) at 
\(\G z_0 \in \G \backslash Z\). 

\end{corollary}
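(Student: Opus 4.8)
The plan is to deduce the statement from the factorization $\Dirac=\PD\circ\inflate$ and from the correspondence-theoretic form of Poincar\'e duality for the closed $\K$-oriented manifold $Z\times_\G X$, so that the only genuinely geometric input is the identification of the relevant cycle with the fibre inclusion $\ezx$.

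First I would carry out the formal reduction. By Proposition~\ref{proposition:factoring_Dirac}, $\Dirac=\PD\circ\inflate$, hence $\inflate(\fund)=\PD^{-1}\bigl(\Dirac(\fund)\bigr)$, and by Definition~\ref{definition:Dirac_class}, $\Dirac(\fund)=[\ev]\otimes_\C[X]$; thus $\inflate(\fund)=\PD^{-1}\bigl([\ev]\otimes_\C[X]\bigr)$. Write $\Psi\colon\RKK^\G_*(Z;C_0(X),\C)\to\K^{-*-n}(Z\times_\G X)$ for the map appearing in the statement, i.e. $\Grd_\G$-equivariant Poincar\'e duality for the fibre $X$ followed by the generalized Green--Julg isomorphism and a Morita equivalence. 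The computation recorded immediately before the statement says exactly that $\Psi\circ\PD^{-1}$ is ordinary Poincar\'e duality $\PD_{Z\times_\G X}$, so
\[ \Psi\bigl(\inflate(\fund)\bigr)=\PD_{Z\times_\G X}\bigl([\ev]\otimes_\C[X]\bigr). \]
By the discussion around \eqref{equation:bd_cycle_for_a_point}, under $\KK^\G_{-n}(C_0(Z\times X),\C)\cong\K_n(Z\times_\G X)$ the class $[\ev]\otimes_\C[X]$ is the $\K$-homology class of the Baum--Douglas cycle $Z\times_\G X\xleftarrow{\ezx}X\to\pnt$. Hence the whole statement reduces to the assertion that $\PD_{Z\times_\G X}$ sends this class to the $\K$-theory class of $\pnt\leftarrow X\xrightarrow{\ezx}Z\times_\G X$.

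To finish, I would invoke the standard fact that, in the Baum--Douglas/correspondence picture, Poincar\'e duality of a closed $\K$-oriented manifold $Y$ does not change the underlying cycle: it re-reads a $\K$-homology cycle $Y\xleftarrow{b}M\to\pnt$ (with $M$ spin$^c$) as the $\K$-theory cycle $\pnt\leftarrow M\xrightarrow{b}Y$ (with $b$ $\K$-oriented by the spin$^c$-structures on $M$ and $Y$). If a verification is wanted, it follows from the Connes--Skandalis composition law: $\PD_Y$ is composition with the class of $\pnt\leftarrow Y\xrightarrow{\delta}Y\times Y$, and composing $\pnt\leftarrow Y\xrightarrow{\delta}Y\times Y\xleftarrow{b\times\id}M\times Y\xrightarrow{\pr_2}Y$ by transversality (the fibre product of $\delta$ and $b\times\id$ being canonically $M$) returns $\pnt\leftarrow M\xrightarrow{b}Y$. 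Applied with $Y=Z\times_\G X$, $M=X$, $b=\ezx$, this completes the argument. One could instead run the proof through Proposition~\ref{proposition:description_of_inflated_class}, which exhibits $\inflate(\fund)$ as the $\Grd_\G$-equivariant correspondence with singular support $Z\times X\xleftarrow{\ez\times\id_X}\G\times X\xrightarrow{\ez\circ\pr_\G}Z$; since $\G$ is torsion-free it acts freely and properly, so $\Grd_\G=\G\ltimes Z$ is Morita equivalent to the compact space $\G\backslash Z$ and the isomorphisms defining $\Psi$ are realized on correspondences by passing to $\G$-quotients, sending the displayed correspondence to $Z\times_\G X\xleftarrow{\ezx}X\xrightarrow{c}\G\backslash Z$ with $c$ constant at $\G z_0$, which after the final Green--Julg/Morita step is the asserted correspondence. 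The two routes agree.

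The step I expect to be the main obstacle is the $\K$-orientation bookkeeping in the last point: one must check that the $\K$-orientation $\PD_{Z\times_\G X}$ assigns to $\ezx$ --- assembled from the spin$^c$-structures on $X$ and on $Z\times_\G X$ --- coincides with the one implicit in the target correspondence, whose normal bundle is the restriction to the fibre of $p^*T(\G\backslash Z)$ with the $\K$-orientation lifted from $Z$. Because the $\K$-orientation on $Z\times_\G X$ is, by construction, compatible with the product $\K$-orientation on $Z\times X$ (cf. Lemma~\ref{lemma:dfahskdfja;sdkfj} and the description of $\PD$ preceding Proposition~\ref{proposition:factoring_Dirac}), this comparison reduces to a pointwise statement and introduces no correction sign; but since conventions for Poincar\'e duality of correspondences are delicate, it is the point I would write out carefully rather than leave implicit.
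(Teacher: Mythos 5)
Your proposal is correct, and your primary route is a genuine (if modest) variant of the paper's. The paper gets the corollary by combining two ingredients: the observation, stated just before the corollary, that the composite of \(\PD^{-1}\) with \(\Grd_\G\)-equivariant Poincar\'e duality for \(X\), Green--Julg and Morita equivalence is ordinary Poincar\'e duality for \(Z\times_\G X\), and the explicit computation of \(\inflate(\fund)\) as the singular-support correspondence in Proposition \ref{proposition:description_of_inflated_class}. Your first route keeps the first ingredient but bypasses the Proposition entirely: you use only Definition \ref{definition:Dirac_class}, the factorization \(\Dirac=\PD\circ\inflate\) of Proposition \ref{proposition:factoring_Dirac}, the identification of \([\ev]\otimes_\C[X]\) with the Baum--Douglas class of the fibre from \eqref{equation:bd_cycle_for_a_point}, and the standard fact that ordinary Poincar\'e duality of a closed \(\K\)-oriented manifold merely re-reads a Baum--Douglas cycle as a correspondence, which you verify correctly via the Connes--Skandalis composition with the diagonal (the transversality of \(\delta\) and \(b\times\id\) is automatic, and the coincidence space is \(M\)). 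What this buys is a purely formal argument at this point in the text, independent of the equivariant singular-support computation; what the paper's route buys is reuse of work already done, and your second sketch (descending the correspondence \(Z\times X\xleftarrow{\ez\times\id_X}\G\times X\to Z\) through the Morita equivalence \(\G\ltimes Z\sim\G\backslash Z\)) is essentially that route. You are also right to single out the \(\K\)-orientation bookkeeping as the one delicate point: the paper leaves it implicit, and the compatibility of the orientation on \(\ezx\) with the product orientation on \(Z\times X\) is exactly what makes the two readings agree without a sign.
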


\subsection{Dirac classes for proper actions}

Suppose now that 
\(X\) is a smooth, \emph{ proper} \(\G\)-manifold, \(\chi \to Z \cong \EG\) a smooth 
classifying 
map for the proper 
action of \(\G\) on \(X\). 
By Sard's theorem, \(\chi\) has a regular value \(z_0\), so 
\(F := \chi^{-1}(z_0)\) is a smooth submanifold of \(X\) of dimension 
\(n - d\), carrying a smooth action of the finite group 
\(\textup{Stab}_\G (z_0)\). If \(\G \backslash X\) is compact, then 
\(F\) is compact.

The fibres \(F_g := \chi^{-1}(gz_0)\) as \(g\) ranges over 
\(\G\) are isomorphic copies, and come with actions of the 
corresponding conjugate isotropy groups. We set 
\begin{equation}
\label{equation:f}
 F_\G := \{ (x, g) \in X\times \G \; | \;\chi (x) = gz_0\}.
 \end{equation}
which is a bundle over \(\G z_0\subset X\) with fibre 
 \( F_g \times 
\textup{Stab}_\G (gz_0)\) over \(gz_0\) .
A \(\G\)-equivariant \(\K\)-orientation on \(X\) induces a canonical 
\(\G\)-equivariant \(\K\)-orientation on \(F_\G\). 

Set \([F_\G] \in \KK^\G_{d-n} (C_0(F_\G), \C)\) 
the transverse Dirac class for \(\G\) acting on \(F_\G\). 
Let \(i \colon F_\G \to X\) be the projection to the first factor.

\begin{theorem}
\label{theorem:ev_periodicity}
Let \( (\G, Z)\) be a smooth oriented group, and \(X\) is a \(\G\)-equivariantly 
\(\K\)-oriented \emph{proper} \(\G\)-manifold, 
\(\chi \colon X \to Z\) be a 
smooth \(\G\)-map,  \(z_0\in Z\) a regular orbit, and \(F_\G\), \(i \colon F_\G \to X\) 
 \emph{etc} as in the 
discussion above. Then 
the Dirac class for \(\G\ltimes X\) is given by the class of the fibre of \(\chi\): 
\[ [\widehat{\G \ltimes X}] = i_*( [F_\G]) \in \KK_{d-n}^\G (C_0(X), \C).\]
In particular, if \(n < d\) then the Dirac class vanishes, and otherwise, 
the Dirac class is represented by a \(n-d\)-dimensional 
spectral triple over \(C_0(X)\rtimes \G\).

\end{theorem}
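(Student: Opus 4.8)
The plan is to verify directly that $i_*([F_\G])$ satisfies the defining equation of Definition~\ref{definition:Dirac_class},
\[
  \Dirac\bigl(i_*([F_\G])\bigr)\;=\;[\ev]\otimes_\C [X]\;\in\;\KK^\G_{-n}(C_0(Z\times X),\C),
\]
and then to invoke uniqueness. Here $i^*\colon C_0(X)\to C_0(F_\G)$ is the $\G$-equivariant $*$-homomorphism dual to $i$ — a proper, $|\mathrm{Stab}_\G(z_0)|$-to-one map, properness being a consequence of properness of the $\G$-action on $Z$ — and $i_*$ denotes composition with its class. Since $X$ is a \emph{proper} $\G$-manifold, so is $Z\times X$, the groupoid $\G\ltimes X$ is proper, $\gamma_{\G\ltimes X}=1_{\G\ltimes X}$, and hence (as in the discussion around \eqref{equation:Dirac_equation}) the Dirac class is unique; so the displayed identity will give $\fund=i_*([F_\G])$.

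\textbf{Rewriting the two sides.} On the right, as recorded in the proof of Proposition~\ref{proposition:transversal_inthemappingcylinder}, $[\ev]=\ez^*([\G])$ with $[\G]\in\KK^\G_0(C_0(\G),\C)$ the transverse Dirac class of the $0$-dimensional proper $\G$-manifold $\G$, whence $[\ev]\otimes_\C[X]=(\ez\times\id_X)_*\bigl([\G\times X]\bigr)$ by multiplicativity of transverse Dirac classes of proper manifolds. On the left, $\Dirac(\cdot)=\cdot\otimes_\C[Z]$ and $[Z]$ is itself the transverse Dirac class of the proper $\G$-manifold $Z=\EG$, so by bifunctoriality of the exterior product and the same multiplicativity (Proposition~\ref{proposition:Dirac_and_transverse}),
\[
  \Dirac\bigl(i_*([F_\G])\bigr)=[Z]\otimes_\C\bigl(i^*\otimes_{C_0(F_\G)}[F_\G]\bigr)=(\id_Z\times i)_*\bigl([Z\times F_\G]\bigr).
\]
Since $z_0$ is a regular value of $\chi$, $\dim F_\G=\dim F=n-d$, so $Z\times F_\G$ and $\G\times X$ are both $n$-dimensional proper $\G$-manifolds carrying the $\K$-orientations induced from those of $Z$ and $X$, and the remaining task is to prove
\[
  (\id_Z\times i)_*[Z\times F_\G]\;=\;(\ez\times\id_X)_*[\G\times X]\quad\text{in }\KK^\G_{-n}(C_0(Z\times X),\C).
\]

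\textbf{The geometric heart.} By the Kasparov index theorem (agreement of analytic and topological wrong-way maps, used throughout this article) and the geometric picture of equivariant $\K$-homology for proper $\G$-spaces, both sides are classes in $\KK^\G_{-n}(C_0(Z\times X),\C)\cong\K^\G_n(Z\times X)$ of the equivariant Baum--Douglas cycles $Z\times X\xleftarrow{\,\id_Z\times i\,}Z\times F_\G\to\pnt$ and $Z\times X\xleftarrow{\,\ez\times\id_X\,}\G\times X\to\pnt$. Now each of these is the equivariant transverse preimage of the $\G$-invariant, $\K$-oriented (discrete, codimension $d$, with normal bundle $TZ|_{\G z_0}$) submanifold $\G z_0\subset Z$ along a $\G$-map $Z\times X\to Z$: for the second cycle this map is $\pr_Z$, a submersion hence automatically transverse, with preimage $(\G z_0)\times X$; for the first it is $\chi\circ\pr_X$, transverse to $\G z_0$ precisely because $z_0$, hence every $gz_0$, is a regular value of $\chi$, with preimage $Z\times\chi^{-1}(\G z_0)$; in both cases the induced $\K$-orientation is the one fixed above and the $|\mathrm{Stab}_\G(z_0)|$-to-one covers $\ez$ and $i$ match on the two sides. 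Because $Z\times X$ is proper and $Z$ is a model for $\EG$, the two $\G$-maps $\pr_Z$ and $\chi\circ\pr_X$ are $\G$-homotopic; a $\G$-homotopy between them, kept transverse to $\G z_0$ as one may, produces a $\K$-oriented $\G$-bordism between the two transverse preimages, and $\K$-oriented $\G$-bordant Baum--Douglas cycles define the same class. Hence the two sides agree, $i_*([F_\G])$ is a Dirac class for $\G\ltimes X$, and by uniqueness it equals $\fund$. (Alternatively one could match $\inflate(i_*([F_\G]))$ with the correspondence-with-singular-support of Proposition~\ref{proposition:description_of_inflated_class} and conclude via Proposition~\ref{proposition:factoring_Dirac}; the route above avoids that extra bookkeeping.)

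\textbf{Consequences and main obstacle.} If $n<d$ then $\dim F=n-d<0$, so $F=\chi^{-1}(z_0)=\varnothing$, $F_\G=\varnothing$, and $\fund=i_*([F_\G])=0$. If $n\ge d$, the transverse Dirac class $[F_\G]$ of the $(n-d)$-dimensional proper $\G$-manifold $F_\G$ is represented by the associated $\G$-equivariant Dirac operator, i.e.\ by an $(n-d)$-summable spectral triple over $C_0(F_\G)\rtimes\G$ (finitely summable when $\G\backslash X$, hence $F$, is compact); composing with the $*$-homomorphism $C_0(X)\rtimes\G\to C_0(F_\G)\rtimes\G$ induced by $i$ yields an $(n-d)$-summable spectral triple over $C_0(X)\rtimes\G$ representing $\fund$. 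The substantive step is the geometric heart: checking that the two transverse preimages coincide as \emph{equivariant} Baum--Douglas cycles — that the connecting $\G$-homotopy can be taken transverse to $\G z_0$ throughout, that the resulting $\G$-bordism carries the correctly induced $\K$-orientation, and that the finite-stabiliser covers on the two sides are compatible. The remaining points — properness of $i$ and $\ez$ so that the pullback $*$-homomorphisms are defined, and compatibility of the product $\K$-orientations in the rewriting step — are routine.
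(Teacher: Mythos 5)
Your proposal is correct and rests on the same two geometric pillars as the paper's argument --- the \(\G\)-homotopy between \(\pr_Z\) and \(\chi\circ\pr_X\) coming from universality of \(Z\cong\EG\), and transversality at the orbit of the regular value \(z_0\), with the coincidence space identified as \(F_\G\) --- but it packages them differently. The paper does not compare classes directly in \(\KK^\G\): it applies \(\inflate\) and the factorization \(\Dirac=\PD\circ\inflate\) of Proposition \ref{proposition:factoring_Dirac}, so the whole computation happens in \(\RKK^\G(Z;\cdot,\cdot)\cong\KK^{\Grd_\G}\), where the correspondence calculus (including correspondences with singular support) is available; there the homotopy enters only through homotopy invariance of wrong-way classes, \((\delta\times\id_X)!=(\id_Z\times G_\chi)!\), and the only transversality needed is that of \(\id_Z\times G_\chi\) with \(\id_Z\times e_{z_0}\times\id_X\), which is exactly regularity of \(z_0\). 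Your route stays in \(\KK^\G\), uses multiplicativity of transverse Dirac classes to reduce to the equality \((\id_Z\times i)_*[Z\times F_\G]=(\ez\times\id_X)_*[\G\times X]\) over the proper \(\G\)-manifold \(Z\times X\), and then converts the same homotopy into a \(\K\)-oriented \(\G\)-bordism of equivariant Baum--Douglas cycles. This buys a statement free of inflation and \(\RKK\), but at the cost of inputs the paper's route does not need: (i) the comparison must be phrased as a fibre product along the orbit immersion \(\ez\colon\G\to Z\) (correspondingly \(i\colon F_\G\to X\)) rather than as the set-theoretic preimage of \(\G z_0\), so that stabilizers are handled --- you acknowledge this, and it is precisely what the paper's coincidence-manifold computation does automatically; (ii) bordism invariance and transverse-preimage functoriality in geometric equivariant \(\K\)-homology for proper, not necessarily cocompact, \(\G\)-manifolds, together with equivariant transversality of the connecting homotopy rel endpoints, are assumed rather than drawn from the paper's toolkit (the paper needs no transversality of the homotopy at all, only homotopy invariance of \(f\mapsto f!\)); these can be shored up via \cite{BHS} and a slice-based equivariant transversality argument, since the action on \(Z\times X\times[0,1]\) is proper. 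With those points made precise, your argument is a valid alternative proof and yields the same consequences, including the vanishing for \(n<d\), the uniqueness via injectivity of \(\Dirac\) for proper (amenable) actions, and the \((n-d)\)-summable spectral triple representative.
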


\begin{proof}
We show that 
\begin{equation}
\label{equation:what_we_want_to_show_in_this_silly_proof}
 \inflate \bigl( i_*([F_\G]) \bigr) = \PD^{-1}([\ev]\otimes_\C [X])
\in \RKK^\G_{d-n} (Z; C_0(Z\times X), \C).\end{equation} The result will follow from 
Proposition \ref{proposition:factoring_Dirac}. 
As in the proof of Proposition \ref{proposition:description_of_inflated_class}, 
\(\PD^{-1}\) involves composition with the class \(\Theta 
\in \RKK^\G_{d} (Z; \C, C_0(Z)) \), where \(\Theta= \delta !\) where 
\(\delta\colon Z \to Z\times Z\) is the diagonal map, canonically 
\(\K\)-oriented, and \(\Grd_\G\)-equivariant. 

Let \(\chi  \colon X \to Z\) be the smooth classifying map with regular value \(z_0\) 
discussed above. Set \[G_\chi \colon X\to Z\times X\] the graph of \(\chi\): 
\(G_\chi (z, x) := \bigl(z, \chi (x)\bigr)\).

\begin{lemma}
The equality 
\[( \delta \otimes_\C \id_X )! = (\id_Z \otimes_\C G_\chi) ! 
\] 
holds in 
\[ \KK^{\Grd_\G}_{d} \bigl(Z; 
C_0(Z\times X), C_0(Z\times Z \times X)\bigr)
 = \RKK^\G_{d} (Z; C_0(X), C_0(Z\times X) \bigr).\]

\end{lemma}

\begin{proof}
By the universal property of \(\EG \cong Z\),
 the coordinate projections \(Z\times Z \to Z\) are \(\G\)-equivariantly 
 homotopic. Fix a \(\G\)-equivariant smooth homotopy 
 \[ F\colon Z \times Z\times [0,1] \to Z\]
 between the two coordinate maps
  and pull it back in one coordinate using the map 
 \(\chi \) to get 
 \[ \tilde{F}\colon Z \times X \times [0,1]\to Z,\;\; \tilde{F} (z, x, t) := 
 F(z,\chi (x), t).\]
Then, as is easily checked, \(\id_Z\times \tilde{F}\) gives a smooth 
\(\Grd_\G\)-equivariant homotopy between 
the smooth \(\K\)-oriented 
 \(\Grd_\G\)-equivariant maps 
 \(\delta \times \id_X\) and \(\id_Z\times G_\chi\), as claimed.

\end{proof}

To complete the proof, we need to evaluate the composition of 
\(\Grd_\G\)-equivariant correspondences 
\begin{equation}
\label{equation:hellsugsousdfsd}
Z\times X \leftarrow Z\times X \xrightarrow{\id_Z\times G_\chi} Z\times Z\times X
\xleftarrow{\id_Z\times e_{z_0} \times \id_X}\ Z\times \G\times X
\rightarrow Z
\end{equation}
The maps \(G_\chi\colon X \to Z\times X\) and \(e_{z_0}\times \id_X \colon \G \times X \to Z\times X\) are transverse since \(z_0\) is a regular value (and hence so is \(gz_0\) for all \(g\in \G\)) 
and the associated coincidence manifold 
\[ \{ ( x, g, y)\in X\times \G\times X \; | \; G_\chi (x) = (gz_0, y) \}\]
is the smooth \(\K\)-oriented manifold \(F_\G\) described above
in \eqref{equation:f}. Taking the product of 
everything with \(\id_Z\) gives the identity 
\eqref{equation:what_we_want_to_show_in_this_silly_proof} as required.

\end{proof}

\begin{corollary}
\label{corollary:dirac_of_z_is_trivial}
\([\widehat{\G\ltimes Z}] = [\ev]\in \KK^\G_0(C_0(Z), \C)\). 

\end{corollary}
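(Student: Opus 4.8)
The plan is to obtain this as an immediate special case of Theorem \ref{theorem:ev_periodicity} with \(X = Z\). The key observation is that \(Z \cong \EG\) is itself a smooth, proper, \(\G\)-equivariantly \(\K\)-oriented \(\G\)-manifold with \(\G\backslash Z\) compact, so all hypotheses of that theorem are met; moreover, for a proper \(\G\)-space the Dirac class is unique, so Theorem \ref{theorem:ev_periodicity} genuinely computes \([\widehat{\G\ltimes Z}]\) rather than merely producing one representative.

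I would take the classifying map \(\chi \colon Z \to Z\) to be the identity. It is a \(\G\)-equivariant submersion, so every point is a regular value; fix any \(z_0 \in Z\). Then \(n := \dim Z = d\), so \(d - n = 0\) (consistent with the target group \(\KK^\G_0(C_0(Z),\C)\)), the fibre \(\chi^{-1}(z_0) = \{z_0\}\) is \(0\)-dimensional, and
\[
F_\G = \{(x,g)\in Z\times\G : x = gz_0\} = \{(gz_0,g) : g\in\G\},
\]
which is \(\G\)-equivariantly diffeomorphic to \(\G\) with its translation action via \(g\mapsto (gz_0,g)\). Under this identification the projection \(i\colon F_\G \to Z\), \(i(x,g) = x\), becomes the orbit map \(\ez\colon \G\to Z\), \(g\mapsto gz_0\), and the \(\K\)-orientation on \(F_\G\) induced from that of \(Z\) is the (trivial) \(\K\)-orientation of the \(0\)-manifold \(\G\); hence \([F_\G] = [\G] \in \KK^\G_0(C_0(\G),\C)\) is the standard transverse Dirac class of \(\G\) acting on itself, represented by the cycle \((\ell^2\G, \mathrm{mult}, 0)\).

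Theorem \ref{theorem:ev_periodicity} then gives \([\widehat{\G\ltimes Z}] = i_*([F_\G]) = (\ez)_*([\G])\), and it remains to identify this with \([\ev]\). Since \(\G\backslash Z\) is compact the orbit \(\G z_0\) is discrete in \(Z\) and \(\ez\) is proper, so \((\ez)_*\) is composition with the class of the pullback \(*\)-homomorphism \(\ez^*\colon C_0(Z)\to C_0(\G)\); unwinding, \((\ez)_*([\G])\) is represented by \(\ell^2\G\) with zero operator and \(C_0(Z)\) acting through \(C_0(Z)\xrightarrow{\ez^*}C_0(\G)\xrightarrow{\mathrm{mult}}\Comp(\ell^2\G)\). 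But \(\ez^*\) is exactly restriction to the orbit \(\G z_0\) followed by the identification \(C_0(\G z_0)\cong C_0(\G)\) along \(g\mapsto gz_0\), so this composite is precisely the homomorphism \(\ev\) of \eqref{equation:point_evaluation_class}; indeed the identity \([\ev] = \ez^*([\G])\) was already recorded in the proof of Proposition \ref{proposition:transversal_inthemappingcylinder}. This yields \([\widehat{\G\ltimes Z}] = [\ev]\). There is no real obstacle beyond bookkeeping: the only points requiring care are that Theorem \ref{theorem:ev_periodicity} pins down \emph{the} Dirac class in the proper case (so that it legitimately applies to \(X = Z\)), and that the \(\K\)-orientation induced on the \(0\)-dimensional \(F_\G\) is the obvious one, so that \([F_\G]\) is the multiplication cycle with no spurious twist.
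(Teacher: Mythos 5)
Your proposal is correct and is essentially the paper's own argument: the paper proves the corollary precisely by invoking Theorem \ref{theorem:ev_periodicity} with \(X = Z\) and \(\chi = \id\), and your identification \(F_\G \cong \G\), \(i = \ez\), \([\ev] = \ez^*([\G])\) is exactly the bookkeeping implicit there (and already recorded in the proof of Proposition \ref{proposition:transversal_inthemappingcylinder}). The paper only adds, in a remark, that an even easier proof is available from the equivariant homotopy of the two coordinate projections \(Z\times Z \to Z\), giving \([\ev]\otimes_\C [Z] = [Z]\otimes_\C [\ev]\).
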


This is the case of Theorem \ref{theorem:ev_periodicity} where \(Z = X\), \(\chi\) the 
identity. map.

\begin{remark}
Corollary \ref{corollary:dirac_of_z_is_trivial} admits another proof, much easier, for it 
boils down to the simple statement that 
\[ [\ev] \otimes_\C [Z] = [Z]\otimes [\ev] \in \KK^\G_{-d} (C_0(Z\times Z), \C), \]
and this follows from the fact that the two coordinate 
projections \(Z\times Z\to Z\), are equivariantly proper homotopic, because 
\(Z\) is universal. 
\end{remark}

Finally, we note that when \(\G\) is finite, acting on \(X\), arbitrary, 
our description \(i_*([F_\G])\) of the Dirac class for \(C_0(X)\rtimes \G\) 
just given 
matches that given in Example \ref{example:dirac_of_finite_group_action}, 
since then \(Z\) becomes a point, and \([\ev]\) the class of the regular 
representation of the finite group.

Theorem \ref{theorem:ev_periodicity} is quite satisfying, from a 
certain point of view, as it gives a case where 
the homological subtraction involved in forming a Dirac 
class, which lies in dimension \(d-n\), matches precisely 
the geometric dimension: there is a spectral triple representative 
of summability dimension \(n-d\).

\section{Dirac classes for boundary actions of negative curved groups}

Along with the Dirac class for the irrational rotation algebra, and its spectral triple 
representative, an important example in noncommutative geometry is the 
action of a co-compact discrete subgroup of \(\mathrm{SL}_2(\R)\) acting on the 
circle by M\"obius transformations. This is a special case of a 
Gromov hyperbolic group \(\G\) acting 
on its boundary \(\partial \G\).

If \(\G\) is a Gromov hyperbolic group (see \cite{GdH} for an exposition) 
 hyperbolicity leads to a compact, metrizable, 
\(\G\)-space \(\overline{\G}\) containing \(\G\) as a dense open \(\G\)-invariant 
open subset, and complement \(\partial \G\). This produces an exact 
sequence 
\begin{equation}
\label{equation:boundary_extension}
0  \to C_0(\G)\ltimes \G \to C(\overline{\G})\rtimes \G \to \hi \to 0
\end{equation}
and, using the canonical isomorphism \(C_0(\G)\ltimes \G \cong \Comp(l^2\G)\), and 
amenabity of the action, a result 
due to Adams in \cite{Ada}, we obtain 
a \(\KK\)-class \([\partial_\G] \in\KK^\G_1(C(\partial G), \C) = 
\K^{-1}(\hi)\). Alternatively, in \cite{EN} a \(\G\)-equivariant completely positive 
splitting of \eqref{equation:boundary_extension} is provided, which implies 
the extension determines a \(\KK_1\)-class. 

The main result of this section is that when \(\G\) also fits into the previous 
framework, the boundary extension class is the Dirac class of the action, with 
its boundary \(\K\)-orientation. 

Some fine points about the statement are discussed following the Theorem.

\begin{theorem}
\label{theorem:the_only_non_trivial_theorem_of_this_paper}
Let \( (\G, Z)\) be a smooth \(\K\)-oriented group with \(Z\) negatively curved. 
Then the \(\G\)-action on \(\partial Z\) admits a \(\KK\)-orientation in the 
sense 
of Definition \ref{definition:equivariant_orientations}, and the Dirac class of the 
action is given by 
\[[\widehat{\G\ltimes \partial Z}] = [\partial_\G]  \in \K^1(C(\partial Z)\rtimes \G) \cong \K^1(C(\bd \G \rtimes \G ).\]
where \([\partial_\G]\) is the boundary extension class. 

\end{theorem}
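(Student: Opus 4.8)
The plan is to verify the defining property of a Dirac class for the action $\G\ltimes\partial Z$, namely that the localization map sends $[\bd_\G]$ to $[\ev]\otimes_\C[\partial Z]$, where $[\partial Z]$ is a transverse Dirac class for the (non-proper) action on $\bd Z$. By Proposition \ref{proposition:factoring_Dirac}, since $\PD$ is an isomorphism, this is equivalent to computing the inflation map $\inflate([\bd_\G])\in\RKK^\G_{-n-d}(Z;C(\bd Z),\C)$ and checking that it agrees with $\PD^{-1}([\ev]\otimes_\C[\partial Z])$, which by Proposition \ref{proposition:description_of_inflated_class} is represented by the $\Grd_\G$-equivariant correspondence with singular support $Z\times\bd Z\xleftarrow{\ez\times\id}\G\times\bd Z\xrightarrow{\ez\circ\pr_\G}Z$. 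So the real content is: (i) produce the $\KK$-orientation on the $\G$-action on $\bd Z$ of Definition \ref{definition:equivariant_orientations}, i.e. a smooth, $\Grd_\G$-equivariantly $\K$-oriented structure on the bundle $Z\times\bd Z\to Z$, and thereby make sense of the transverse Dirac class $[\partial Z]$; and (ii) identify the inflated boundary class with the correspondence above.

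For step (i), I would use the fact that for each $z\in Z$ the geodesic flow identifies $\bd Z$ diffeomorphically with the unit sphere $S_zZ\subset T_zZ$ via $\xi\mapsto\exp_z(\infty\cdot\xi)$ (endpoint of the geodesic ray from $z$ in direction $\xi$); negative curvature guarantees this is a homeomorphism, and it equips $\bd Z$ with a smooth structure depending on $z$. As $z$ varies these fit into a smooth bundle over $Z$ — this is essentially the sphere bundle $S(TZ)\to Z$ with its tautological identification with $Z\times\bd Z$ — and $\G$ acts compatibly because it acts by isometries; this is the content of the expected Lemma \ref{lemma:bundlesmooth}. The $\K$-orientation on the fibres comes from the $\G$-equivariant $\K$-orientation of $Z$: the $d$-dimensional $\K$-orientation of $TZ$ restricts to a $\Grd_\G$-equivariant $\K$-orientation on the vertical tangent bundle of $Z\times\bd Z\to Z$ (the sphere bundle is $\K$-orientable once the ambient bundle is, after accounting for the radial direction). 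This gives $[\partial Z]\in\KK^\G_{-n}(C(\bd Z),\C)$ with $n=d-1$, hence $[\widehat{\G\ltimes\bd Z}]$ lands in $\KK^\G_{d-n}=\KK^\G_1$ as claimed.

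For step (ii), I would represent $[\bd_\G]$ by the Fredholm module of \cite{EN} described in the introduction: $L^2\bigl(\G,L^2(\bd Z,\mu)\bigr)$ with the regular representation of $C(\bd Z)\rtimes\G$ and the projection $P_{\ell^2\G}$ onto functions constant on the boundary. To compute $\inflate$ of its class I would run the descent-and-Morita machinery exactly as in the proof of Theorem \ref{theorem:dirac_is_fourier_mukai}: the inflation/$\PD^{-1}$ composite turns an equivariant cycle into a family over $Z$, and here the key point is that the harmonic-measure family $\{\mu_z\}_{z\in Z}$ (the family of Patterson--Sullivan / visual measures based at the points of $Z$), which is exactly the data underpinning the completely positive splitting of \cite{EN}, realizes this family. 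Concretely, based at $z$, the measure $\mu_z$ is quasi-invariant with Radon--Nikodym cocycle a power of the Busemann cocycle, and the projection $P_{\ell^2\G}$ becomes, fibrewise, a rank-one-type projection supported near the orbit point — so the family of Fredholm modules degenerates, away from a tubular neighbourhood of $\G z_0$, to something trivial. Chasing this through, the inflated class is supported on the normal bundle to the immersed orbit $\ez\colon\G\to Z$, and matches $(\ez\times\id)^*\otimes(\ez\circ\pr_\G)!$, i.e. the correspondence with singular support of Proposition \ref{proposition:description_of_inflated_class}. Then Proposition \ref{proposition:factoring_Dirac} gives $\Dirac([\bd_\G])=[\ev]\otimes_\C[\partial Z]$, so $[\bd_\G]$ is a Dirac class, as required.

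The main obstacle is step (ii): matching the probabilistic description of $[\bd_\G]$ — built from the $\G$-action on measures — with the geometric correspondence-with-singular-support description of the inflated class. This requires showing the family of harmonic measures $\{\mu_z\}$ is precisely the $\Grd_\G$-equivariant datum that inflation produces, and that the "constant on the boundary" projection is, fibrewise over $Z$, the Bott/Thom class concentrated at the basepoint; controlling the behaviour near the orbit $\G z_0$ (where the measure is concentrating) versus away from it is where the negative-curvature geometry and the fine estimates of \cite{EN} on the Radon--Nikodym cocycle enter. A cleaner alternative, which I would pursue if the direct computation is too delicate, is to invoke the dual-Dirac morphism for the hyperbolic group $\G$ (which exists, e.g. by \cite{EM:Coarse}) and use formula \eqref{equation:Dirac_equation} together with the known identification, due to the author and Meyer, of $[\bd_\G]$ with $(\eta\otimes 1_{C(\bd Z)})\otimes([\ev]\otimes_\C[\partial Z])$ — reducing the statement to uniqueness of the Dirac class in the amenable-action case (the $\G$-action on $\bd Z$ being amenable), i.e. to $\gamma_{\G\ltimes\bd Z}=1$.
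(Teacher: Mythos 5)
Your step (i) is essentially the paper's own argument (the geodesic identification $Z\times\partial Z\cong SZ$ over $Z$, with the fibrewise $\K$-orientation induced from the disk bundle), so that part is fine. The gap is in step (ii). You never give a mechanism for actually computing $\inflate([\partial_\G])$: the assertion that, after inflating the probabilistic Fredholm module of \cite{EN}, the projection $P_{\ell^2\G}$ ``becomes, fibrewise, a rank-one-type projection supported near the orbit point'' and that the family ``degenerates away from a tubular neighbourhood of $\G z_0$'' is exactly the hard statement, and it does not follow from the descent-and-Morita computation of Theorem \ref{theorem:dirac_is_fourier_mukai} (that computation is a descent of a Dirac cycle to a Hilbert-module cycle, not an identification of an inflated extension class with a shriek map). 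Inflation by itself just produces the constant family of the given cycle over $Z$; turning that into the correspondence with singular support requires a genuine homotopy. The paper supplies it by a different and concrete route: first factor the boundary class as $[\partial_\G]=[\partial_{\overline{Z}}]\otimes_{C_0(Z)}[\ev]$, where $[\partial_{\overline{Z}}]\in\KK^\G_1(C(\partial Z),C_0(Z))$ is the class of the extension $0\to C_0(Z)\to C(\overline{Z})\to C(\partial Z)\to 0$ split equivariantly by the Poisson transform against the visual measures $\mu_z$; then observe that although this extension admits no $\G$-equivariant collar, its inflation over $Z\cong\EG$ is a $\Grd_\G$-equivariant bundle of manifolds-with-boundary that \emph{does} admit an explicit equivariant collar built from geodesic rays; then use that a collared equivariant extension has class equal to the shriek class $\iota!$ of the mid-collar embedding (Lemma \ref{lemma:main_inflation_lemma}), and that the two resulting completely positive splittings (inflated Poisson splitting versus collar splitting) are homotopic because the space of $\Grd_\G$-equivariant c.p.\ splittings is convex. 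That convexity homotopy plus the collar construction is precisely the missing idea in your argument; without it the ``concentration near the orbit'' picture remains heuristic. (The paper then finishes with a short correspondence computation under $\PD$, moving the mid-collar point to the base point along the geodesic ray, rather than by matching directly against Proposition \ref{proposition:description_of_inflated_class} as you propose, but that difference is cosmetic.)

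Your proposed fallback is also not available as stated: the ``known identification, due to the author and Meyer, of $[\partial_\G]$ with $(\eta\otimes 1_{C(\partial Z)})\otimes([\ev]\otimes_\C[\partial Z])$'' is essentially the assertion of the theorem itself (the right-hand side is the formula \eqref{equation:Dirac_equation} defining the Dirac class when a dual-Dirac element exists), so invoking it is circular; the Emerson--Meyer results on boundary extensions (torsion order $\chi(\G)$, Gysin sequences) do not contain this identification. The part of the fallback that is sound is the uniqueness: since the boundary action is amenable, inflation is an isomorphism and the Dirac class is unique, so it would indeed suffice to verify the defining equation for any one representative --- but that verification is exactly what the collar-plus-convexity argument provides.
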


The boundary extension class is represented by a cycle which doesn't involve 
any smooth structures, but only, in a sense, 
 on the asymptotic geometric of the group \(\G\), and its action on it.

\subsection{KK-orientability of boundary actions}

For simplicity, we are going to assume that \(\G\) is torsion-free, or, equivalently, that 
\(\G = \pi_1 (M)\) for a negatively curved, compact, \(d\)-dimensional, 
manifold which we assume, in addition, to be \(\K\)-oriented and 
on which, therefore, we can assemble a 
Dirac operator \(D\) acting on the spinor bundle.
 
The metric, \(\K\)-orientation, \emph{etc} on \(M\) lifts to \(\G\)-equivariant 
data: spinor bundles on \(Z:= \tilde{M}\), connection, and so on, and 
one assembles the 
`lifted,' \(\G\)-equivariant Dirac operator representing \([Z] \in 
\KK_{-d}^\G(C_0(Z), \C) \).

The boundary sphere \(\partial Z\) of the negatively curved space \(Z\) is the 
boundary of the usual geodesic compactification of \(Z\), and it agrees by a 
\(\G\)-equivariant homeomorphism with
 the Gromov 
boundary \(\partial \G\) of the group \(\G\), or of \(Z\) itself. 

The isometric 
group action of \(\G\) on \(Z\) extends to an action of 
\(\G\) on \(\partial Z\)
by homeomorphisms, due to general properties of hyperbolic groups, and 
the action in this case can be shown to be by 
\(C^{1+\epsilon}\)-diffeomorphisms, but is not smooth in general, even 
for surface groups. \footnote{The action is smooth when 
the curvature of \(M\) is constant.} We show below that, however, the 
\(\G\)-action on the boundary sphere is \(\KK\)-orientable in our sense. 

We first describe the localization map in this situation. 
 We use the same notation as above, 
with \(M = \G\backslash Z\), a negatively curved manifold. Let \(SM\) be the 
sphere bundle of its tangent bundle. Then 
\[ Z\times_\G \partial Z \cong SM,\]
in the following canonical way. Given \(z\in Z\), and \((z,\xi)\) a unit tangent 
vector at \(z\), let 
\[\textup{EXP}_z(z, \xi) :=  \lim_{t\to \infty} \exp_z (t\xi),\;\;\ \xi \in S_xZ.\]
It follows a geodesic ray beginning at 
\(z\) and ending at a boundary point in \(\partial Z\). This construction is 
clearly equivariant and determines a homeomorphism between the 
fibre \(S_zZ\) of the sphere bundle \(SZ \to Z\), and the boundary sphere 
\(\partial Z\). Any such homeomorphism of course then gives 
\(\partial Z\) a smooth structure, identifying it with an ordinary sphere.

In any case, we see that 
the localization map can be identified with a map 
\[ \K^*(C(\partial Z)\rtimes \G)  \to \K_{-*+d}( SM), \]
and a Dirac class is one which maps to the class of a single fibre of 
the bundle projection 
\(\pi \colon SM \to M\), \(\K\)-oriented as a sphere, and regarded as a 
Baum-Douglas cycle for \(SM\).

\begin{lemma}
\label{lemma:bundlesmooth}
In the above notation, the \(\G\)-action on \(\partial Z\) is 
\(\KK\)-orientable. 
\end{lemma}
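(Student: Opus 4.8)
The plan is to produce, directly, the structure required by Definition \ref{definition:equivariant_orientations}: a smooth, $\Grd_\G$-equivariantly $\K$-orientable $\Grd_\G$-manifold structure on the $\Grd_\G$-space $Z\times \partial Z$, where $\Grd_\G = \G\ltimes Z$. The key geometric input is the $\G$-equivariant homeomorphism $\textup{EXP}_z\colon S_zZ \xrightarrow{\cong} \partial Z$ described just above the Lemma, which lets us identify $Z\times \partial Z$ with the pullback of the sphere bundle $SZ\to Z$ along the first projection, i.e.\ with $\{(z, (w,\xi)) : z\in Z,\ (w,\xi)\in SZ\}$ via $(z,b)\mapsto (z, \textup{EXP}_z^{-1}(b))$. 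Concretely, over the point $z\in Z$ we equip $\partial Z$ with the smooth manifold structure transported from the round sphere $S_zZ\subset T_zZ$ by $\textup{EXP}_z$; this gives $Z\times \partial Z\to Z$ the structure of a smooth fibre bundle with fibre $S^{d-1}$.

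Next I would check equivariance. A group element $g\in \G$, viewed as a morphism of $\Grd_\G$ from $z$ to $gz$, must act as a diffeomorphism from $(\partial Z, \text{smooth structure at }z)$ to $(\partial Z, \text{smooth structure at }gz)$. Under the identifications above this map is exactly the differential $d g_z\colon S_zZ\to S_{gz}Z$ (suitably normalized to unit vectors), since $g\circ \exp_z = \exp_{gz}\circ dg_z$ and hence $g\circ \textup{EXP}_z = \textup{EXP}_{gz}\circ (dg_z|_{S_zZ})$ by passing to the limit $t\to\infty$ in the geodesics. Because $g$ is a Riemannian isometry of $Z$, $dg_z$ is a linear isometry $T_zZ\to T_{gz}Z$, hence restricts to a smooth diffeomorphism of unit spheres. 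This is precisely what the smoothness-and-equivariance clause of Definition \ref{definition:equivariant_orientations} demands; note that the $\G$-action on $\partial Z$ itself need not be smooth (it is only $C^{1+\epsilon}$ in general), exactly as the discussion around the Lemma warns, but the \emph{bundle} $Z\times \partial Z\to Z$ is smooth and $\Grd_\G$-equivariantly so.

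Finally I would supply the $\K$-orientation. The vertical tangent bundle of $Z\times \partial Z\to Z$ is, under the identification with $SZ\times_Z Z$, the vertical tangent bundle of $SZ\to Z$; but $SZ$ is the unit sphere bundle of the Riemannian vector bundle $TZ\to Z$, so its vertical tangent bundle sits inside $\pi_{SZ}^*(TZ)$ as the orthogonal complement of the radial direction, and therefore $T^{\mathrm{vert}}(SZ)\oplus \underline{\R}\cong \pi_{SZ}^*(TZ)$. The $\G$-equivariant $\K$-orientation on $TZ$ (part of the smooth $\K$-oriented group structure of $(\G,Z)$) thus induces, after adding a trivial line, a $\G$-equivariant $\K$-orientation on the vertical tangent bundle; since the radial line is canonically trivial and equivariant, this is a genuine $\Grd_\G$-equivariant $\K$-orientation on $T^{\mathrm{vert}}(Z\times\partial Z)$, i.e.\ an equivariant bundle of $\K$-orientations over $Z$ in the sense required. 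Assembling the three points gives the asserted $\KK$-orientation of the $\G$-action on $\partial Z$. The only mildly delicate step is the second one—verifying that the bundle maps induced by $g\in\G$ are fibrewise smooth even though the boundary action is not smooth—but this is immediate once one writes the map as $dg_z|_{S_zZ}$ and uses that $g$ is an isometry of $Z$; no estimate on the boundary regularity is needed.
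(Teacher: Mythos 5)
Your proposal is correct and follows essentially the same route as the paper: identify \(Z\times \partial Z\) with the sphere bundle \(SZ\) via the geodesic exponential map \(\textup{EXP}\), use that \(\G\) acts smoothly (indeed isometrically) on \(SZ\) to get the fibrewise smooth, equivariant bundle structure, and obtain the equivariant \(\K\)-orientation on the vertical tangent bundle by a two-out-of-three argument with a trivial line. The only cosmetic difference is that the paper runs the two-out-of-three step via the closed disk bundle and its boundary, whereas you split off the radial line inside \(\pi^*(TZ)\) directly; these are the same argument in different packaging.
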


\begin{proof}
 We show that the \emph{bundle} 
 \(\EG\times \partial Z = Z \times \partial Z\) of smooth manifolds (with 
 fibre \(\partial Z\)) admits a canonical fibrewise smooth structure, left 
 invariant by \(\G\), by noting that the exponential map discussed above 
 gives a \(\G\)-equivariant homeomorphism  
 \[ \textup{EXP}\colon SZ \to Z\times \partial Z,\]
 with \(SZ\) the sphere bundle of \(Z\), such that the diagram 
 \[ \xymatrix{ SZ\ar[d]_{\pi} \ar[r]^{\textup{EXP}} &
  Z\times \partial Z \ar[ld]^{\pr_Z} \\ Z & },\]
 commutes. \(SZ\) is a smooth manifold, and \(\G\) acts smoothly on it since it acts smoothly on 
 \(Z\). The 
 bundle projection \(\pi\) is a smooth, \(\G\)-equivariant submersion. Hence 
 it gives \(Z\times \partial Z \cong 
 SZ\) the structure of a \(\G\)-equivariant bundle of smooth manifolds 
 over \(Z\), that is, a \(Z\ltimes \G\)-manifold.  
 
 The above reasoning, replacing the sphere bundle \(SM\) by the 
 (closed) disk bundle \(\overline{D}M\), generates a bundle 
 of manifolds-with-boundary (the manifolds are closed disks), carrying by assumption 
 a bundle of \(\K\)-orientations, equivariant under \(\G\), and this bundle 
 generates a bundle of \(\K\)-orientations on the bundle of boundaries by 
 the two-out-of-three Lemma.

 \end{proof}

 \begin{remark}
From the foliation point of view, \(S M\), in the above notation,
 admits an Anasov foliation into 
asymptotic equivalence classes of geodesic rays; this foliation has generic leaf 
 \(\cong Z\), or more generally \(Z/\G'\) where \(\G'\subset \G\) is the isotropy of a
 boundary point (always a cyclic group, either infinite or trivial). The leaves of this foliation are all smooth, but the foliation is not infinitely differentiable in the transverse direction. 

 The holonomy groupoid accordingly 
 acts by \(C^{1+\epsilon}\)-diffeomorphisms on one of the transversals
 \(S_xM\), and this can be naturally identified with 
 the boundary action of the group \(\G\) on \(\partial Z\), as in the above 
 argument. 
 
 The lack of smoothness of the \(\G\) action on its boundary thus corresponds to 
 failure of transverse smoothness of the Anosov foliation. See \cite{KH}. 
 
\end{remark}

\subsection{Preparatory remarks on extension theory }

Proceeding to the proof of Theorem \ref{theorem:the_only_non_trivial_theorem_of_this_paper}, 
we begin with a discussion of (known) constructions relating to 
extensions and \(\KK\)-theory. We will need at some stage 
equivariant versions of some of 
the constructions below, with respect to a group, or groupoid, so will deal with the 
general theory at that level of generality. However, as the referee has pointed out, 
there are issues with making extension theory equivariant, for example, a continuous 
group action on a C*-algebra may not extend to a (continuous) action on its multiplier 
algebra. We will be working strictly with \'etale groupoids, however, so we will 
fix \(\Grd\) to be an \'etale groupoid in the following discussion -- in fact, in our applications, 
it will be either trivial, proper, and of the form
 \(\Grd_\G = \G\ltimes \EG\), or \(\G\) itself (with \(\G\) a hyperbolic group, as per the 
 discussion above.)

Suppose that

\begin{equation}
\label{equation:exact_sequence}
 0 \rightarrow J \xrightarrow{\alpha} B \xrightarrow{\beta} A \rightarrow 0
 \end{equation}
is a \(\Grd\)-equivariant exact sequence 
of C*-algebras. 
The \emph{Busby} invariant of the equivariant extension \eqref{equation:exact_sequence}
is the  \(\Grd\)-equivariant *-homomorphism 
\(\tau (a) := \pi (\tilde{a}) \in \Calkin(J) := \Mult (J)/J\), where \(\pi \colon \Mult(B) \to \Mult(B)/B\) is the 
quotient map, and where \(\tilde{a}\) denotes a lift of \(a\) under \(\beta\)., regarded as a 
multiplier of \(B\). 

The Busby invariant is uniquely associated to the strong isomorphism 
class of the extension: if \(\tau \colon A\to \Calkin (J)\) is any 
( \(\Grd\)-equivariant) 
*-homomorphism, then \(B:= \{ (a, m) \in A \oplus \Mult(J) \; | \; \pi(m) = \tau (a)\}\) 
determines an ( \(\Grd\)-equivariant) extension with of \(A\) by \(J\) with the given 
Busby invariant, and if \(\tau\) comes from \eqref{equation:exact_sequence} then 
this procedure determines a strongly isomorphic extension. This procedure puts 
strong isomorphism classes of  \(\Grd\)-equivariant extensions in
 in 1-1 correspondence 
with  \(\Grd\)-equivariant *-homomorphisms \(A \to \Calkin (J)\). 

Similarly, if 
\(\Mult^s(B)\) denotes multipliers of \(B\otimes \Comp\) and \(\Calkin^s(J):= 
\Mult^s(J)/J\otimes \Comp\) then we obtain a bijective 
correspondence between strong isomorphism classes of  \(\Grd\)-equivariant 
extensions of 
\(A\) by \(J\otimes \Comp\) and  \(\Grd\)-equivariant *-homomorphisms 
\(A\to \Calkin^s(J)\). 

We generally work with Busby invariants rather than extensions themselves.

\begin{example}
\label{example:ext_from_kk}
Suppose that \(\rho\colon A \to \Bound(J\otimes l^2)\) is  
a  \(\Grd\)-equivariant representation, and that \(P\in \Bound(J\otimes l^2)\) such that 
\[ [P,\rho (a)], \;\; 
\rho (a) \cdot (P^2-P)\;\; g(P)-P \; \in \Comp(J\otimes l^2)\]
for all \(a\in A\), \(g\in \Grd\), then In the usual way this defines a \(\Grd\)-equivariant 
Kasparov triple 
\( (J\otimes l^2, \rho,  F:= 2P-1)\), that is, cycle for \(\KK_1^\Grd(A,  J)\). 
The map  
\[ \tau \colon A \to \Calkin^s(J), \; \tau (a) := P\rho (a)P\; \textup{mod} \, J\otimes \Comp\]
is a  \(\Grd\)-equivariant *-homomorphism, and hence is the
 Busby invariant of some strong 
isomorphism class of \(\Grd\)-equivariant 
extension of \(A\) by \(J\otimes \Comp\). 
It is the  \(\Grd\)-equivariant extension associated to the Kasparov cycle 
\( ( J\otimes l^2, \rho , P)\) for \(\KK^\Grd_1(A,J)\). 

In typical examples (\emph{e.g.} where \(A\) is unital), the operator 
\(P\) is an \emph{essential projection}: that is, \(P^2-P\) is compact. 

\end{example}

Bott Periodicity can be described conveniently by this set-up.

\begin{example}
\label{example:bottclass}
The Kasparov triple \( ( C_0(\R), 1, \chi)\) represents the 
\emph{Bott class} in \(\KK_1(\C, C_0(\R))\), if \(\chi\colon \R \to [-1,1]\) is a 
normalizing function (odd, and having limits \(\pm 1\) at the endpoints.) Equivalently, 
it represents the \emph{Thom class} of the trivial \(1\)-dimensional vector bundle \(\R\) over 
a point. 

Of course \(P:= \frac{\chi+1}{2}\) is then an essential projection. 

\end{example}

Say that a  \(\Grd\)-equivariant Busby invariant \(\tau \colon A \to \Calkin^s(J)\) is \emph{equivariantly 
dilatable} if there is a completely positive, contractive and \(\Grd\)-equivariant
 map 
\(s\colon A \to \Mult^s(J)\) such that \(\pi \circ s = \tau.\) We say that \(s\) is a splitting 
of \(\tau \). It determines an equivariant completely positive splitting of the corresponding extension. 
The \emph{Stinespring construction} 
realizes any dilatable Busby invariant as one of the form 
\(\tau (a) = P\rho (a)P \; \textup{mod} \, J\otimes \Comp\), \emph{i.e.} 
as the Kasparov cycle as in 
 Example \ref{example:ext_from_kk}, where \(P\) is a projection. Note that 
 this procedure produces not just an essential projection (\(P^2-P\) is compact),
  but a projection (\(P^2-P=0\).) 
\begin{remark}
 In Kasparov theory, essential projections can be 
replaced by actual projections at the expense of adding a degenerate to the 
cycle: suppose that \(\Hilm = J\otimes l^2\) is the standard Hilbert \(J\)-module and 
  \( ( \Hilm, \pi, P)\) is an odd cycle for \(\KK^\Grd_1(A,J)\), 
   where \(P\) is a self-adjoint essential-projection in the sense of Definition 
   \ref{example:ext_from_kk} such that \(||P||\le 1\), then the triple \( ( \Hilm, 0, 1-P)\) is 
   evidently a triple as well, where \(0\) denotes the zero representation of 
   the algebra on \(\Hilm\), but it is degenerate. Hence the cycles 
   \( ( \Hilm, \pi, P)\) and \( ( \Hilm \oplus \Hilm ,  \pi \oplus 0, P\oplus 1-P)\) are 
   equivalent in \(\KK\). Let \(\hat{P}:= \left[ \begin{matrix} P & (P-P^2)^{\frac{1}{2}} \\ (P-P^2)^{\frac{1}{2}} & 1-P\end{matrix}\right]\), then \(\hat{P}^2=\hat{P}\), that is, 
   \(\hat{P}\) is an actual projection. The operator homotopy 
   with \[ \hat{P}_t:= \left[ \begin{matrix} P & (tP-t^2P^2)^{\frac{1}{2}} \\ (tP-t^2P^2)^{\frac{1}{2}} & 1-P\end{matrix}\right]\] 
   gives a homotopy between the Kasparov cycles \( ( \Hilm \oplus \Hilm, \pi \oplus 0, P\oplus 1-P)\) 
   and \(( \Hilm \oplus \Hilm, \rho \oplus 0 , \hat{P})\), so \((\Hilm, \rho, P)\) and 
   \( (\Hilm \oplus \Hilm, \rho \oplus 0 , \hat{P})\) determine the same
   class: \(P\) has been replaced by an actual projection. This leads to a completely 
   positive map 
   \[ \hat{s}(a) := \hat{P}\hat{\rho} (a)\hat{P},\]
   for the Busby invariant 
   \[ \hat{\tau}(a) := \hat{P}\hat{\rho} (a)\hat{P} \; \textup{mod} \;  J\otimes \Comp.\]
  Computing, this equals 
  \[\left[ \begin{matrix} P\rho (a)P & P\rho (a) \sqrt{P-P^2} \\ 
   P\rho (a) \sqrt{P-P^2}& \sqrt{P-P^2}\rho (a) \sqrt{P-P^2}\end{matrix}\right] \; \textup{mod}\, M_2(J)
   \]
   and as all entries except for the top left corner are zero mod \(M_2(J)\), this equals 
   \(P\rho (a)P = \tau (a)\), whence \(\hat{\tau} = \tau\). 
   \end{remark}


We are primarily interested in the following example of a geometric source. 

Let \(\overline{M}\)
 be a compact manifold-with-boundary 
\(\bd M\) and interior \(M\). Then 
we have an extension 
\begin{equation}
\label{equation:manifold_with_boundary_extension}
 0 \rightarrow C_0(M) \to C(\overline{M}) \to C(\bd M) \to 0.
 \end{equation}
With the identification \(\Mult\bigl(C_0(M)\bigr) \cong C_b(M)\) we can describe the 
Busby invariant as the *-homomorphism 
\[ \tau\colon C(\bd M) \to C_b(M) /C_0(M), \;\;\tau (f) = \tilde{f} \; \textup{mod} \; C_0(M)\]
where \(\tilde{f}\) is any extension of \(f\) to a continuous 
function on \(\overline{M}\).

It will be convenient to work with another extension rather than \eqref{equation:manifold_with_boundary_extension}.

Let  \(\overline{\nu} \subset \overline{M}\) be a collar of the boundary (collars, even 
\(\Grd\)-equivariant collarings, with \(\Grd\) a groupoid, are discussed extensively 
in \cite{EM:Dualities}.) Thus 
\(\overline{\nu} \cong \bd M \times [0,1)\) with the boundary identifying as 
\(M\times \{0\}\). 
Let \(\chi \in C_b(\overline{\nu})\) with \(0 \le \chi \le 1\), \(\chi = 1\) on 
\(\bd M\) and \(\chi\) has compact support in \(\overline{\nu}\). We may take 
\(\chi (x,t) = 1-t\) for example, defined on \(U\cong \bd M \times [0,1)\) and 
extended to zero outside \(U\), where we have used the identification 
\(\overline{\nu} \cong \bd M \times [0,1)\) Let 
\[ \nu := \overline{\nu}\cap M\,\]
an open subset of \(M\) homemorphic (via the collaring) to \(\bd M \times (0,1)\). 
Let \(r \colon \overline{\nu} \to \bd M\) be the 
projection, then dualizing \(r\) gives a *-homomorphism 
\[ \hat{r}\colon C(\bd M) \to C_b(\overline{\nu}) \subset C_b(\nu),\]
and since \(\chi^2-1 \in C_0(\nu)\),
 we get a Kasparov cycle 
\( ( C_0(\nu), \hat{r}, \chi)\) for \(\KK_1(C(\bd M), C_0(\nu))\). 
On the other hand, the extension
\begin{equation}
\label{equation:germ_ext}
  0 \to C_0(\nu) \to C_0(\overline{\nu}) \to C(\bd M) \to 0
  \end{equation}
  admits the following completely positive splitting: 
  \[ s\colon C(\bd M) \to C_0(\overline{\nu})\; \; s (f) := \chi  \cdot \hat{r}(f) \cdot \chi,\]
  since \(\chi \cdot\hat{r}(f) \cdot \chi = f\) on \(\bd M.\) 
  
  Now, carrying out the general procedure discussed above,
   with \(A:= C(\bd M), \; B:= C(\overline{\nu}), \; 
  J:= C_0(\nu)\) and \(P:= \chi\), and we 
 realize the Busby invariant of the 
  extension \eqref{equation:germ_ext} as the upper left corner of the 
  representation 
  \[\hat{\rho} (f) := \left[ \begin{matrix} 
  \chi \cdot \hat{r} (f)\cdot \chi &  \chi \cdot \hat{r} (f) \cdot \sqrt{\chi -\chi^2} \\ 
   \chi \cdot  \hat{r} (f)\cdot  \sqrt{\chi-\chi^2}& \sqrt{\chi-\chi^2}\cdot \chi  \cdot \hat{r} (f) \cdot \sqrt{\chi-\chi^2}\end{matrix}\right] \; \textup{mod}\, M_2\bigl( C_0(\nu)\bigr),
   \]
  This proves that the Kasparov cycle associated to the extension 
  \eqref{equation:germ_ext} is the triple 
  \( ( C_0(\nu), \hat{r}, \chi)\) for \(\KK_1(C(\bd M), C_0(\nu))\). (Compare to 
  Example \ref{example:bottclass}). 
  
  Let \(\iota\colon \bd M \to M\) the smooth embedding \(\iota (x) = c(x,\frac{1}{2})\), where 
\(c\colon \bd M \times [0,1)\to M\) is the collaring (\(c\) is a diffeomorphism onto 
the open collar \(\overline{\nu}\).) Then 
 \(\iota\) has an evidently trivial normal bundle with total space \(\nu\) the same as 
 above, which therefore 
 carries a canonical \(\K\)-orientation and Thom class \(t_\nu \in 
 \KK_1(C(\bd M) , C_0(\nu))\). 
  
  \begin{lemma}
  1) The class \([\partial_{\overline{\nu}}]\in \KK_1(C(\bd M), C_0(\nu))\) of the extension 
  \eqref{equation:germ_ext} is equal to the Thom class 
  \(t_\nu\) of the normal bundle to the embedding \(\iota\). 
2) Let \([\bd_{\cM}] \in \KK_1(C(\bd M), C_0(M))\) be the class of the extension  
\eqref{equation:manifold_with_boundary_extension}. Then
\[(\varphi^!)_* ([\partial_{\overline{\nu}}]) = [\partial_{\cM}]\in \KK_1(C(\bd M), C_0(M))\]
holds, where \(\varphi^!\colon C_0(\nu) \to C_0(M)\) is the *-homomorphism induced by 
the open embedding \(\varphi\colon \nu\to M\).

\end{lemma}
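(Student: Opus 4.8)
The plan is to treat the two assertions in turn: for 1) I would unwind the explicit Kasparov cycle $(C_0(\nu),\hat r,\chi)$ just extracted for the extension \eqref{equation:germ_ext} and recognise it as a Bott cycle, and for 2) I would use naturality of the extension class under the inclusion of the open collar into $\overline{M}$. For 1), recall we have just shown that $[\partial_{\overline{\nu}}]$ is the class of the triple $(C_0(\nu),\hat r,\chi)$ in $\KK_1(C(\bd M),C_0(\nu))$, with $\chi$ an essential projection and $\hat r$ the $*$-homomorphism dual to the collar projection $r\colon\overline{\nu}\to\bd M$. First I would identify $\nu$ with the total space of the trivial normal bundle of $\iota$: the collar gives $\nu\cong\bd M\times(0,1)$, and a diffeomorphism $(0,1)\cong\R$ turns this into $\nu\cong\bd M\times\R$. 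Under this identification $C_0(\nu)\cong C(\bd M)\otimes C_0(\R)$, the representation $\hat r$ becomes multiplication by functions $f\otimes 1$, and $\chi$ --- depending only on the collar coordinate by construction --- becomes $1\otimes h$ for a continuous monotone $h\colon\R\to(0,1)$ with limits $0$ and $1$ at the two ends, so that $F=2\chi-1$ corresponds to $1\otimes(2h-1)$ with $2h-1$ a normalizing function on $\R$. Orienting $\nu$ by the direction of the collar coordinate for which $2h-1$ has limit $+1$ at $+\infty$ (this is the canonical $\K$-orientation referred to in the statement), the triple $(C_0(\nu),\hat r,\chi)$ is then literally the exterior product $1_{C(\bd M)}\hat{\otimes}_\C\beta_1$ of the identity class on $C(\bd M)$ with the Bott cycle $\beta_1$ of Example \ref{example:bottclass}. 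Since by that example $\beta_1$ represents the Thom class of the trivial line bundle over a point, the external product $1_{C(\bd M)}\hat{\otimes}_\C\beta_1$ is the Thom class of the trivial line bundle $\nu=\bd M\times\R$ over $\bd M$, i.e.\ $t_\nu$; this is 1). Beyond a little orientation bookkeeping there is nothing else to check.

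For 2), the key point is that a collar neighbourhood of the boundary is an \emph{open} subset of $\overline{M}$: the collar $\overline{\nu}$ is open in $\overline{M}$ and contains $\bd M$, and $\nu=\overline{\nu}\cap M$ is open in $M$, so $C_0(\nu)$ is an ideal of $C_0(M)$ and $\varphi^!\colon C_0(\nu)\to C_0(M)$ is just this ideal inclusion. Extension of functions by zero then gives a commutative diagram of short exact sequences with top row \eqref{equation:germ_ext}, bottom row \eqref{equation:manifold_with_boundary_extension}, left vertical map $\varphi^!$, middle vertical map the extension-by-zero $*$-homomorphism $C_0(\overline{\nu})\to C(\overline{M})$, and right vertical map the identity of $C(\bd M)$. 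By functoriality of $\mathrm{Ext}(C(\bd M),-)\cong\KK_1(C(\bd M),-)$ in the ideal variable --- the statement that a morphism of extensions which is the identity on the quotient carries the class of the top row to $(\varphi^!)_*$ of it --- we obtain $(\varphi^!)_*([\partial_{\overline{\nu}}])=[\partial_{\cM}]$. If one prefers to stay entirely within the machinery above, $f\mapsto\chi\cdot\hat r(f)\cdot\chi$ is a completely positive contractive splitting of \eqref{equation:manifold_with_boundary_extension} --- well defined into $C(\overline{M})$ because $\chi$ has compact support inside the collar --- and running the Stinespring recipe of Example \ref{example:ext_from_kk} on it produces the cycle $(C_0(\nu),\hat r,\chi)$ regarded over $C_0(M)$, i.e.\ $(\varphi^!)_*$ of the cycle of 1), up to a degenerate summand supported where $\chi$ vanishes, on which $F=2\chi-1=-1$.

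Finally I would remark that, by the construction of wrong-way elements of $\K$-oriented embeddings in Section \ref{subsec:topcorr}, $(\varphi^!)_*(t_\nu)$ is precisely the shriek map $\iota!\in\KK_1(C(\bd M),C_0(M))$ of the boundary inclusion, so 1) and 2) together identify the boundary extension class $[\partial_{\cM}]$ with $\iota!$. The main obstacle, such as it is, is entirely one of bookkeeping: pinning down the canonical $\K$-orientation of $\nu$ so that the cycle is $+t_\nu$ rather than its negative in 1), and being careful in 2) that the ideal inclusion $C_0(\nu)\trianglelefteq C_0(M)$ really does induce the $\KK_1$-pushforward at the level of extension classes (standard, but worth recording once via Busby invariants if one wants a fully self-contained account).
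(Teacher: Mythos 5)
Your proposal is correct and amounts to exactly the argument the paper leaves to the reader: recognising the cycle \((C_0(\nu),\hat r,\chi)\) as \(1_{C(\bd M)}\otimes\)(Bott cycle of Example \ref{example:bottclass}), hence the Thom class of the trivial normal bundle, and then invoking naturality of the extension class (equivalently, pushing the completely positive splitting forward along the ideal inclusion \(C_0(\nu)\subset C_0(M)\)) for part 2). The paper's own proof is just a pointer to Example \ref{example:bottclass} and to Connes--Skandalis, so your write-up supplies the same route in detail, including the orientation bookkeeping it silently assumes.
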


\begin{remark}  
In the notation of correspondences, this says that 
\[ \iota! = [\partial_{\cM}]\in \KK_1(C(\bd M), C_0(M).\]
\end{remark}

The proofs of the Lemma is straightforward
(see Example \ref{example:bottclass}). See 
 \cite{Connes-Skandalis} for some rather similar argumentation (which is 
 where the author learned it.)

 There is no difficulty 
 whatsoever in making the construction \(\Grd\)-equivariant, if 
 \(\Grd\) is a proper groupoid, acting on a bundle of smooth manifolds.  
 We state the Lemma, and leave details of the proof to the reader; 
 very similar such Lemmas using similar techniques appear in 
\cite{EM:Dualities}. Furthermore, we construct equivariant 
collars in the main example of interest, below.

\begin{lemma}
\label{lemma:main_inflation_lemma}
Let \(\Grd\) be a proper groupoid acting smoothly on a bundle 
of smooth manifolds-with-boundary \(\overline{M}\) with boundary 
\(\bd M\) and interior \(M\). Assume there is a \(\Grd\)-equivariant 
collaring 
\[ c\colon \bd M \times [0,1) \to \overline{M},\]
 \emph{i.e.} \(c\) is a \(\Grd\)-equivariant 
 fibrewise diffeomorphism onto an open neighbourhood 
 of \(\bd M \) in \(\overline{M}\)  whose restriction to \(\bd M \times \{0\}\) is the 
inclusion \(\bd M \to \overline{M}\). Then the \(\Grd\)-equivariant extension 
\[ 0 \to C_0(M) \to C(\overline{M}) \to C(\bd M) \to 0\]
admits a \(\Grd\)-equivariant contractive and completely positive splitting, the 
smooth embedding \(\iota \colon \bd M \to M, \; \iota (x) = c(x,\frac{1}{2})\) admits a 
canonical \(\Grd\)-equivariant orientation, and the equation 
\[ \iota! = [\partial_M]\in \KK_1^\Grd(C(\bd M), C_0(M))\]
holds.

\end{lemma}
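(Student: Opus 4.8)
The plan is to transplant, step for step, the non-equivariant argument given just before the statement, checking that $\Grd$-equivariance survives at each stage — which it does precisely because we are handed a $\Grd$-equivariant collar. First I would set $\overline{\nu} := c(\bd M\times[0,1))$, an open $\Grd$-invariant neighbourhood of the boundary bundle, and $\nu := \overline{\nu}\cap M$, which $c$ identifies $\Grd$-equivariantly and fibrewise-diffeomorphically with $\bd M\times(0,1)$; let $\chi\in C_b(\overline{\nu})$ be the cutoff $\chi(c(x,t)) := 1-t$, extended by zero, and let $r\colon\overline{\nu}\to\bd M$, $r(c(x,t)) := x$, be the retraction. Since $c$ is $\Grd$-equivariant, $\chi$ is a genuinely $\Grd$-fixed element of the multiplier algebra $C_b(\overline{\nu})$ and $r$ is a $\Grd$-equivariant map, so $\hat{r}\colon C(\bd M)\to C_b(\overline{\nu})\subset C_b(\nu)$ is a $\Grd$-equivariant $*$-homomorphism. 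Then $s(f) := \chi\cdot\hat{r}(f)\cdot\chi$ is a $\Grd$-equivariant, contractive, completely positive map with $s(f)\bmod C_0(\nu)$ equal to the Busby invariant of the $\Grd$-equivariant extension \eqref{equation:germ_ext}, i.e.\ an equivariant completely positive splitting; post-composing with the open inclusion $C_0(\overline{\nu})\hookrightarrow C(\overline{M})$ and noting that $\chi\cdot\hat{r}(f)\cdot\chi$, extended by zero, is a continuous lift of $f$ to $\overline{M}$, the same $s$ splits the full extension $0\to C_0(M)\to C(\overline{M})\to C(\bd M)\to 0$.

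Next I would run the $\Grd$-equivariant Stinespring dilation on the pair $(\chi,\hat{r})$, exactly as in the discussion above: replacing the essential projection $\chi$ by the honest $\Grd$-invariant projection $\hat{P}$ and $\hat{r}$ by the dilated representation $\hat{\rho}$, one realizes the Busby invariant of \eqref{equation:germ_ext} as a compression by a projection, and the matrix computation recorded there shows the resulting cycle is equivalent in $\KK_1^{\Grd}(C(\bd M),C_0(\nu))$ to $(C_0(\nu),\hat{r},\chi)$. It then remains to identify this cycle with $\iota!$. The embedding $\iota\colon\bd M\to M$, $\iota(x):=c(x,\tfrac12)$, has trivial normal bundle whose total space is $\nu\cong\bd M\times(0,1)$, canonically $\Grd$-equivariantly $\K$-oriented by orienting the trivial line-bundle direction with the collar parameter; its $\Grd$-equivariant Thom class is, fibrewise, the Bott class of Example~\ref{example:bottclass}, namely $(C_0(\nu),\hat{r},\chi)$ itself. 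Hence $\iota! = [\partial_{\overline{\nu}}]$ in $\KK_1^{\Grd}(C(\bd M),C_0(\nu))$, where $[\partial_{\overline{\nu}}]$ denotes the class of \eqref{equation:germ_ext}.

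Finally, pushing forward along the $\Grd$-equivariant open embedding $\varphi\colon\nu\hookrightarrow M$ and using that the inclusion $C_0(\overline{\nu})\hookrightarrow C(\overline{M})$ carries the germ extension \eqref{equation:germ_ext} to the full extension $0\to C_0(M)\to C(\overline{M})\to C(\bd M)\to 0$ — again because the Busby invariant of the latter is $f\mapsto\tilde{f}\bmod C_0(M)$ for any continuous lift $\tilde{f}$, and $\chi\cdot\hat{r}(f)\cdot\chi$ is such a lift — we get $(\varphi^!)_*[\partial_{\overline{\nu}}] = [\partial_M]$; since $(\varphi^!)_*$ of the Thom class of $\nu$ is by definition the shriek class of $\iota$ regarded as a map into $M$, this yields $\iota! = [\partial_M]$ in $\KK_1^{\Grd}(C(\bd M),C_0(M))$, as required. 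The only genuine subtlety — the one the referee flagged and which we have been careful about throughout — is that extension theory does not make sense equivariantly for arbitrary continuous group actions, since the action need not pass to the multiplier algebra; this is no obstruction here because $\Grd$ is \'etale (indeed proper) and acts by fibrewise diffeomorphisms, so its action on $C_0$ of each bundle of manifolds extends canonically to $C_b$, and, more to the point, every multiplier-level object we use — $\chi$, $\hat{r}$, $\hat{P}$, $\hat{\rho}$ — is produced directly from the $\Grd$-equivariant collar and is therefore manifestly $\Grd$-equivariant. I expect this equivariance bookkeeping, rather than any new idea, to be the main thing to check; the equivariant collar itself is constructed by hand in the main example of interest below.
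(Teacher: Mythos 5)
Your proposal is correct and follows exactly the route the paper intends: the paper itself leaves the proof to the reader, pointing to the non-equivariant collar/cutoff/Stinespring discussion immediately preceding the Lemma (and to similar arguments in \cite{EM:Dualities}), and your argument is precisely that discussion transplanted equivariantly, with the equivariance of $\chi$, $\hat{r}$, and the dilation coming from the $\Grd$-equivariant collar. The identification of the cycle $(C_0(\nu),\hat{r},\chi)$ with the Thom class of the trivialized normal bundle and the push-forward along $\varphi$ to pass from the germ extension to the full boundary extension are the same two steps as in the paper's non-equivariant lemma, so nothing is missing.
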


  \subsection{Conclusion of the proof}
  
  We now return to hyperbolic groups. 
  
  Let \( (Z, \G)\) be a smooth \(\K\)-oriented 
  group with \(Z\) negatively curved, so that it is Gromov hyperbolic with 
  Gromov compactification \(\cZ\) and boundary
  \(\partial Z \cong \partial \G\) (noting that 
  \(\G\) acts co-compactly and isometrically 
  on \(Z\).)
  
   The boundary \(\bZ\) can be identified with \(S^{n-1}\), if \(\dim (Z) = n\), and 
  the group acts by \(C^{1+\epsilon}\)-diffeomorphisms of this sphere. 
  We have a \(\G\)-equivariant 
  exact sequence 
  \begin{equation}
  \label{equation:boundary_extension_filled_in}
  0 \to C_0(Z ) \to C(\cZ) \to C(\bZ) \to 0.
  \end{equation}
Clearly the boundary does not admit a \(\G\)-equivariant collaring. For if it did, 
  the image \(\iota (\bZ) \subset Z\) would be \(\G\)-invariant, which is impossible, 
  since \(\bZ\) is compact and \(\G\) acts properly. 
  
  However, after inflating this whole situation over \(Z \cong \EG\) we obtain a 
  \(\Grd\)-equivariant bundle of manifolds-with-boundary and corresponding 
  extensions, and now it is possible to find an 
  equivariant collaring 
  \[ c\colon Z\times \bZ\times [0,1)\to Z\times \cZ\]
 quite explicitly, for example as follows. 
For a point \( (x,a) \in Z\times \bZ\), the geodesic ray emanating from 
\(x\) and pointing towards the boundary point \(a\) determines a \emph{canonical}
(geometrically defined) map \(r_{x,a}\colon [0,\infty) \to Z\) (and endpoint
\(a\) in \(\cZ\)), so that 
\[ r_{gx,ga}(t) = gr_{x,a}(t)\]
for any isometry of \(Z\).  Fix a re-scaling 
\([0,1)\stackrel{\cong}{\longrightarrow} [0,\infty)\) mapping \(1\) to \(\infty\). 
 Then combining the re-scaling 
with the map \(r\) just defined gives the required collaring 
\[ c\colon Z \times \bZ \times [0,1) \cong Z\times \bZ\times [0,\infty)\stackrel{\bar{r}}\to Z\times \cZ, \]
the explicit formula may be taken to be
 \[c(x,a,t) = \bigl( x, r_{x,a}( \frac{\sqrt{1-t}}{t})\bigr),\; \textup{if} \; t\not= 0,
  \; \textup{and} \; c(x,a,0)  := a.\]

Lemma \ref{lemma:main_inflation_lemma} now applies. Let 
\[ \iota \colon Z\times \bZ \to Z\times Z,\; \iota (x,a) := c(x,a, \frac{1}{2}).\]
This is a \(\Grd\)-equivariant oriented fibre wise embedding, \emph{i.e.} a 
bundle of smooth embeddings \(\bd Z \to Z\) parameterized by the 
points of \(Z\), and equivariant, as a bundle of maps, under the \(\G\)-action. 
It yields a class 
\begin{equation}
\label{equation:owieurowieurwe}
 \iota! \in \KK^\Grd_1(C_0(Z\times \bZ) , C_0(Z\times Z)) \cong 
\RKK^\G_1(Z; C(\bZ), C_0(Z))
\end{equation}
which, by Lemma \ref{lemma:main_inflation_lemma} 
is the same as the corresponding extension class, that is, the 
class obtained via the \(\Grd_\G\)-equivariant Stinespring construction from the 
\(\Grd_\G\)-equivariant bundle of extensions
\begin{equation}
\label{equation:bundle_of_sdfsdfsh}
0 \to C_0(Z\times Z)\to C_0(Z\times \cZ) \to C_0(Z\times \bZ)\to 0,
\end{equation} 
over \(Z\). 

For convenience in the argument we are using, we point out that 
the \(\G\)-equivariant extension \eqref{equation:boundary_extension_filled_in}, 
though it doesn't admit a \(\G\)-equivariant collaring, it 
\emph{does} admit a \(\G\)-equivariant completely positive splitting of 
another kind. If \(x\in Z\), the exponential map determines a map 
\( \exp_z\colon S_x(Z) \to \partial Z, \) and pushing forward the volume 
element on \(S_xZ\) determined by the metric, we obtain a measure 
\(\mu_x\in \mathrm{Prob}(\partial Z)\). The formula 
\[ (Pf)(z) := \int_{\partial Z} f(\xi)d\mu_x(\xi)\]
provides a Poisson-transform in this situation: \(Pf\) extends continuously 
to \(\overline{Z}\) and restricts to \(f\) on the boundary. The construction 
is clearly equivariant. 

Therefore the extension 
\[ 0 \to C_0(Z) \to C(\overline{Z})\to C(\partial Z) \to 0\]
determines a class \([\partial_{\cZ}]\) in \(\KK^\G_1\bigl(C(\partial Z), C_0(Z)\bigr)\). 

See \cite{Coo} for a treatment of measures on the boundaries of 
hyperbolic spaces.

It is clear from computing with the cycles that 
\[ [\partial_{\cZ}]\otimes_{C_0(Z)} [\ev] = 
[\partial_\G]\in \KK^\G_1(C(\partial Z), \C),\]
where \([\partial_\G]\) is the boundary extension class (the Dirac class, as 
we aim to prove).

\begin{lemma}
The inflation of the boundary extension class
 \([\partial_\G]\in \K^1(\hi)\) factors as 
\begin{equation}
\inflate ([\partial_\G]) = \iota ! \otimes_{C_0(Z\times Z)} \inflate([\ev]) 
\in \RKK^\G_1(Z; C(\bZ), \C).
\end{equation}
with \(\iota!\) as in \eqref{equation:owieurowieurwe}. 

\end{lemma}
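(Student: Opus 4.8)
The plan is to factor $[\partial_\G]$ through the ``relative'' boundary extension class $[\partial_{\cZ}]\in \KK^\G_1(C(\bZ), C_0(Z))$, apply functoriality of $\inflate$, and then identify the inflation of $[\partial_{\cZ}]$ with $\iota!$ by means of the bundle of extensions \eqref{equation:bundle_of_sdfsdfsh} and Lemma \ref{lemma:main_inflation_lemma}.

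First I would recall the identity $[\partial_{\cZ}]\otimes_{C_0(Z)}[\ev] = [\partial_\G]$, established just above by comparing the cycles (the Poisson splitting restricts over the orbit to the evaluation splitting). Since $\inflate$ is the functor $\KK^\G \to \KK^{\Grd_\G}$ induced by the groupoid homomorphism $\Grd_\G = \G\ltimes Z \to \G$, it is additive and multiplicative for Kasparov composition products; applying it to this identity gives
\[
\inflate([\partial_\G]) = \inflate([\partial_{\cZ}])\otimes_{C_0(Z\times Z)}\inflate([\ev]) \in \RKK^\G_1(Z; C(\bZ), \C),
\]
using $C_0(Z)\otimes C_0(Z)\cong C_0(Z\times Z)$ with the diagonal $\G$-action and first coordinate as the momentum map for $\Grd_\G$, together with the standard identification $\RKK^\G_*(Z;-,-)\cong \KK^{\Grd_\G}_*(C_0(Z)\otimes -, C_0(Z)\otimes -)$. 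It therefore suffices to prove $\inflate([\partial_{\cZ}]) = \iota!$.

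For that, I would note that $[\partial_{\cZ}]$ is the class of the $\G$-equivariant extension \eqref{equation:boundary_extension_filled_in}, which carries the $\G$-equivariant completely positive Poisson splitting $P$ exhibited above, so its class is represented by the Stinespring cycle attached to $P$ as in Example \ref{example:ext_from_kk}. Inflating along $\Grd_\G\to\G$ sends a $\G$-equivariant completely positive splitting to a $\Grd_\G$-equivariant completely positive splitting of the inflated extension, and the Stinespring cycle of the inflated splitting is exactly the inflation of the original Stinespring cycle; hence $\inflate([\partial_{\cZ}])$ is the $\KK_1$-class of the inflation of \eqref{equation:boundary_extension_filled_in}. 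That inflated extension is precisely the $\Grd_\G$-equivariant bundle of extensions
\[
0 \to C_0(Z\times Z)\to C_0(Z\times\cZ)\to C_0(Z\times\bZ)\to 0
\]
of \eqref{equation:bundle_of_sdfsdfsh}. Now Lemma \ref{lemma:main_inflation_lemma}, applied to the explicit $\Grd_\G$-equivariant collar $c$ built above from the geodesic rays $r_{x,a}$, identifies the class of this bundle of extensions with $\iota!$ of \eqref{equation:owieurowieurwe}. Substituting back yields $\inflate([\partial_\G]) = \iota!\otimes_{C_0(Z\times Z)}\inflate([\ev])$, as claimed.

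The step I expect to require the most care is the passage ``$\inflate$ of an extension class $=$ class of the inflated extension'': one must check that inflating the Busby invariant — equivalently the completely positive splitting and its Stinespring dilation — genuinely produces the Stinespring data of the inflated extension, and, crucially, that the $\Grd_\G$-equivariance of the resulting operator $P$ and representation is exactly the almost-invariance demanded by the definition of $\RKK^\G_*(Z;-,-)$. This is where the \'etale hypothesis on $\G$ and the existence of the equivariant completely positive splitting (so that the extension class is genuine, not merely formal) are used, cf.\ Remark \ref{remark:inflationisomorphism}; the remaining manipulations are formal properties of composition products and are routine.
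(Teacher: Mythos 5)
Your proposal is correct and follows essentially the same route as the paper: factor \([\partial_\G] = [\partial_{\cZ}]\otimes_{C_0(Z)}[\ev]\), apply multiplicativity of \(\inflate\), and identify \(\inflate([\partial_{\cZ}])\) with \(\iota!\) via the inflated bundle of extensions \eqref{equation:bundle_of_sdfsdfsh} and Lemma \ref{lemma:main_inflation_lemma}. The step you flag as delicate is settled in the paper by observing that the inflated extension carries two natural \(\Grd_\G\)-equivariant completely positive splittings --- the inflated Poisson splitting and the collar splitting --- and that the space of such splittings is convex, so the corresponding Stinespring cycles are homotopic and the class is independent of the choice.
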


\begin{proof}
The 
class \(\inflate([\partial_{\cZ}])\) is that of an extension 
\eqref{equation:bundle_of_sdfsdfsh} with 
two natural \(\Grd_\G\)-equivariant completely positive 
splittings: one using the product of the identity map on \(Z\) and the 
Poisson splitting of \eqref{equation:boundary_extension_filled_in} explained above, 
and the other using the \(\Grd_\G\)-equivariant collaring. The space of 
\(\Grd_\G\)-equivariant completely positive maps is convex, and so the two 
corresponding cycles are homotopic. We obtain 
\[ \iota! =\inflate([\partial_{\cZ}]) \in \RKK^\G_1\bigl(Z; C(\partial Z), C_0(Z)\bigr),\]
by 
Lemma \ref{lemma:main_inflation_lemma}. 
We get 
\begin{multline}
 \iota ! \otimes_{C_0(Z\times Z)} \inflate ([\ev])
\\= \inflate ([\partial_{\cZ}] ) \otimes_{C_0(Z\times Z)} \inflate([\ev])
= \inflate ( [\partial_{\cZ}]\otimes_{C_0(Z)} [\ev]) = \inflate ([\partial_\G])
\end{multline}

\end{proof}

We now complete the proof of Theorem \ref{theorem:the_only_non_trivial_theorem_of_this_paper}. 

We have shown that 
\begin{equation}
\label{equation:proof_of_main_1}
\inflate([\partial_\G]) = \iota! \otimes_{C_0(Z\times Z)} \otimes_{C_0(Z\times Z)} (1_{C_0(Z)}\otimes_\C [\ev])
\end{equation}
where 
\( \iota\colon Z\times \bZ\to Z\times Z\) is 
\[\iota (x, a) = \bigl(x, c(x,a, \frac{1}{2})\bigr).\]
This equality holds in 
\(\RKK^\G_*(Z; C(\bZ), \C).\)

To complete the proof, we show that 
\begin{equation}
\label{32493849238492834}
\PD \bigl( \iota! \otimes_{C_0(Z\times Z)} \otimes_{C_0(Z\times Z)} (1_{C_0(Z)}\otimes_\C [\ev])\bigr) = [\ev]\otimes_\C [\bZ].
\end{equation}

Using the definition of \(\PD\), the left hand side of this equation is 
\begin{equation}
\label{equation:proof_of_main_2}
 \overline{\iota!} \otimes_{C_0(Z\times Z)} (1_{C_0(Z)}\otimes [\ev]) 
 \otimes_{C_0(Z)} [Z] = \overline{\iota !} \otimes_{C_0(Z\times Z)} ( [\ev]\otimes_\C [Z])
\end{equation}
By commutativity of the external product \(\otimes_\C\) and the fact that the two 
coordinate projections \(Z\times Z \to Z\) are \(\G\)-equivariantly homotopic, 
this is the same as 
\begin{equation}
\label{equation:proof_of_main_3}
  \overline{\iota !} \otimes_{C_0(Z\times Z)} ( [Z]\otimes_\C [\ev])
  =\overline{\iota!} \otimes_{C_0(Z\times Z)}([Z]  \otimes_\C 1_{C_0(Z)})\otimes_{C_0(Z)} [\ev]
\end{equation}

Now we claim that 
\begin{equation}
\label{equation:proof_of_main_4}
\overline{\iota!} \otimes_{C_0(Z\times Z)}([Z]  \otimes_\C 1_{C_0(Z)})
= 1_{C_0(Z)}\otimes_\C [\bZ]
\end{equation} 
with \([\partial Z]\) the transverse Dirac class on \(\bZ\). 

The class 
\(\overline{\iota !} \in \KK^\G_*(C_0(Z\times \bZ), C_0(Z\times Z))\) 
is represented by the wrong-wap map 
\[ r\colon Z\times \bZ \to Z, r(x,a):= \bigl( x, c(x,a, \frac{1}{2})\bigr),\]
or, more precisely, by the smooth correspondence 
\[ Z\times \bZ \xleftarrow{\id} Z\times \bZ \xrightarrow{r}Z\times Z.\]
And 
\([Z]\otimes_\C 1_{C_0(Z)} \) is of course represented by the correspondence 
\[Z\times Z\xleftarrow{\id} Z\times Z\xrightarrow{\pr_2}  Z.\]
These correspondences are transverse (the left map of the second is a submersion) so 
can be composed using transversality and coincidence spaces and the outcome is 
easily computed  to be the correspondence 
\[ Z\times \bZ \xleftarrow{\id} Z\times \bZ \xrightarrow{r'} Z,\]
with \(r'(z, a) := c(z, a, \frac{1}{2})\). 

But moving \(c(z,a, \frac{1}{2})\) along the ray \([z,a)\) from \(c(z,a, \frac{1}{2}) \) to 
\( z = \lim_{t\to 1}c(z,a,t) \)  gives a homotopy between \(r'\) and 
the projection \(\pr_{Z}\colon Z\times \bZ \to Z\).

Now given the claim \eqref{equation:proof_of_main_4}, 
plug it into \eqref{equation:proof_of_main_3}, to give 
that the left hand side of \eqref{32493849238492834} equals 
\[ \bigl( 1_{C_0(Z)}\otimes_\C [\bZ]\bigr)  \otimes_{C_0(Z)} [\ev] = [\bZ]\otimes_\C [\ev].\]
as required.

\section{The intersection index formula }

The procedure followed by Connes, Gromov 
and Moscovici say in their paper \cite{CGM}, amounts to,  
as they put it, a sort of 
reverse index theorem. It builds an analytic 
object (a \(\KK\)-class) 
from to a given topological one (group cohomomology class). In their 
case, the purpose was to prove homotopy-invariance of 
the topological object (the higher signature determined by the 
cohomology class).  

We are doing something similar. The class of a fibre in 
\(Z\times_\G X\to \G\backslash X\) may or may not admit an 
\emph{analytic} representative (as a cycle representing the
 Dirac class). But any time this is done, an automatic topological 
 formula for its induced \(\K\)-theory pairing is determined.

Let \( (Z, \G)\) be a smooth, \(d\)-dimensional, \(\K\)-oriented group, and let 
\(X\) be a smooth equivariantly \(\K\)-oriented \(\G\)-manifold of dimension \(n\). 
Assume that a Dirac class \([\widehat{\G \ltimes X}]\in \KK_{d-n} (C_0(X)\rtimes \G, \C)\) 
exists, then it determines a pairing and corresponding map 
\[ \K_{n-d } (C_0(X)\rtimes \G) \to \Z.\]
We are going to compute this map geometrically on the range of the Baum-Connes 
assembly map 
\[\mu\colon  \KK^\G_{n-d }  \bigl(C_0(Z), C_0(X)\bigr) \to \K_{n-d} (C_0(X)\rtimes \G).\]
This is possible due to the topological definition of Dirac class.

To simplify matters, we will 
assume that \(\G\) is torsion-free in the rest of this section. 
As before we let 
\(\ezx \colon X \to Z\times_\G X\) the inclusion of \(X\) as
 a fibre in \(p\colon Z\times_\G X \to \G\backslash Z\). 

Using inverse of the Poincar\'e duality from 
Proposition \ref{proposition:factoring_Dirac}
\[\PD^{-1}\colon  \KK^\G_*\bigl(C_0(Z), C_0(X)\bigr) \cong \RKK^\G_{*+d}\bigl(Z; \C, C_0(X)\bigr)\]
and the generalized Green-Julg Theorem, 
\[ \RKK^\G_*\bigl(Z; \C, C_0(X)\bigr) \cong \KK_{*} 
(\C, C_0(Z\times X) \rtimes \G) \cong 
\K^{-*}(Z\times_\G X),\]
we may re-cast the Baum-Connes assembly map as a map 
\begin{equation}
\label{equation:dualized_bc}
 \hat{\mu} \colon \K^{-*} (Z\times_\G X) \to \K_{*-d}(C_0(X)\rtimes \G)
 \end{equation}
shifting degrees by \(-d\). The relevant dimension for purposes of pairing 
with the Dirac class, is then \(* = n\). 
The domain of \(\hat{\mu}\) may be described in terms of 
geometric equivalence classes of smooth correspondences 
\begin{equation}
\label{equation:correspondence_generators_for_ktop}
 \pnt \leftarrow (M, \xi) \xrightarrow{f}  Z\times_\G X.
 \end{equation}
with \(\xi\) a compactly supported \(\K\)-theory class on \(M\), \(f\) \(\K\)-oriented. Since 
\(Z\times_\G X\) is already endowed with a fixed \(\K\)-orientation, by the \(2\)-out-of-\(3\) 
result for \(\K\)-oriented vector bundles, \(\K\)-orientations on \(f\), that is, on the 
vector bundle \(f^*\bigl( T(Z\times_\G X)\bigr) \oplus TM\), are in \(1\)-\(1\) 
correspondence with \(\K\)-orientations on \(M\), so that we may regard the data of a 
geometric cycle for the domain of \(\hat{\mu}\) as being a smooth, \(\K\)-oriented 
manifold \(M\), a smooth map \(f\colon M \to Z\times_\G X\), and a compactly supported 
\(\K\)-theory class \(\xi \in \K^{-i}(M)\) on \(M\). 

The dimension of the corresponding 
class in \(\K^*(Z\times_\G X)\) is \( -i-n-d + \dim M\), and after Poincar\'e dualizing it 
we obtain thus a class in 
\(\KK^\G_{i+n-\dim M} (C_0(Z), C_0(X))\), so that in order to get
 a \(n-d\)-dimensional \(\K\)-theory class, 
  the right dimension to pair 
with the Dirac class.  
for \(C_0(X)\rtimes \G\), we need \( i = \dim M -d\) (mod \(2\), of course). 

In particular, if \(M\) is compact, 
and \(\xi = \mathbf{1}\) is the class of the trivial line bundle, the case of most immediate 
geometric 
interest, and the one we will focus on, then this shows that we should be interested 
in examples when \(\dim M = d\), the dimension of \(Z\). 

\begin{definition}
A \emph{\(d\)-dimensional 
geometric cocycle} for the \(\G\) action on \(X\) is a pair consisting of a compact, 
\(d\)-dimensional 
\(\K\)-oriented manifold \(M\) and a smooth map \(f\colon M \to Z\times_\G X\). 

Its class in \(\K^{-d}(Z\times_\G X)\) is denoted \(\mathrm{Index} (f!)\). 

\end{definition}

\begin{example}
\label{example:integer_actions_dirac_pairing_1}
If the integers \(\Z\) acts on a compact manifold \(X\) of dimension \(n\), then 
\(d = 1\) and geometric \(1\)-cocycles correspond roughly 
to (homotopy classes of) 
loops in the mapping cylinder \(\R\times_\Z X\). 

\end{example}

\begin{example}
\label{example:z2_action_and_some_geometric_cocycles}
Let \(\Z^2\) act on a torus \(\T^n\) by a pair of group translations, so that 
\(\R^2\times_{\Z^2} \T^n\cong \T^{2+n}\). 
 Let \( \tilde{L}\) be a plane in \(\R^{2+n}\) specified by a
set of \(n\) equations
\[ a_i x +b_i y + u_{i1} t_1 + \cdots + u_{in} t_n = 0, \;\; i = 1, \ldots , n\]
with integer coefficients with the \(n\)-by-\(n\) matrix \( U:= (u_{ij})\) invertible 
over \(\Q\). 

Then \(\tilde{L}/\Z^{n+2}\) is a \(2\)-dimensional torus which maps canonically to  
\(\T^{2+n}\).  We get a \(2\)-dimensional geometric cocycle 
\[ p\colon \T^2 \to \R^2\times_{\Z^2} \T^n.\]

\end{example}

\begin{example}
\label{example:geometric_intersection_index_vector_field}
Let \(M\) be a negatively curved compact \(d\)-dimensional
 \(\K\)-oriented manifold and 
\(\G = \pi_1 (M)\) acting on the universal cover \(Z := \tilde{M}\). Then 
\[ Z\times_\G \partial Z \cong SM,\]
wher e\(SM\) is the sphere bundle of the tangent bundle of \(M\). If the Euler characteristic 
\(\chi (M) \) is zero, then there is a non-vanishing vector field on \(M\) and hence a 
smooth map 
\[ \xi \colon M \to SM,\]
and, \(\K\)-orienting \(\xi\) by the \(\K\)-orientations on its domain and range, 
 we get a \(d\)-dimensional geometric cocycle \(\mathrm{Index}(\xi!)\) 
 for \(\G\) acting on \(\partial Z\). 

\end{example}

Let \( (M, f)\) be a (slightly inapted named) \(d\)-dimensional 
geometric cycle, as it determines a smooth \(n\)-dimensional 
correspondence 
as in \eqref{equation:correspondence_generators_for_ktop}
with class \([M,f]\in \KK_{n}\bigl( \C, C(Z\times_\G X)\bigr)\). The dualized version of the 
assembly map of \eqref{equation:dualized_bc} shifts degrees by \(-d\) and 
so \[\hat{\mu} ([M,f])\in \KK_{n-d} (\C , C_0(X)\rtimes \G) = \K_{n-d}(C_0(X)\rtimes \G)\]
can be paired with the Dirac class
\(  [\widehat{\G\ltimes X}]\in \KK_{d-n} (C_0(X)\rtimes \G, \C)\) to give an 
integer. 
We call this integer the \emph{Dirac index} of the cocycle. It is an analytic invariant.

We now define a topological invariant of a cocycle \(f\colon M \to Z\times_\G X\).
Consider the inclusion
 \(\ezx \colon X \to Z\times_\G X\) of the fibre \(X_{z_0}\). By perturbing 
\(f\) through a homotopy if necessary, we may assume that \(f\) and \(\ezx\) are transverse. Therefore 
we can compose the correspondences  
\[ \pnt \leftarrow M \rightarrow Z\times_\G X  \xleftarrow{\ezx} X\rightarrow \pnt\]
by transversality, yielding the \(\K\)-oriented smooth, \(0\)-dimensional manifold 
\( f^{-1}( X_{z_0})\), where \(X_{z_0}\) is the fibre. 
This inverse image is 
is a finite set of points, suitably \(\K\)-oriented.

We call the algebraic sum of these \(\K\)-oriented (\emph{e.g.} signed) points the 
\emph{intersection index} of the cocycle.

\begin{example}
\label{example:integer_actions_dirac_pairing_2}
In the case of integer actions as in Exampe \ref{example:integer_actions_dirac_pairing_1}
the intersection index of a loop in \(\R\times_\Z X\) is the algebraic number of times 
the loop crosses the hypersurface \(X\cong F \subset \R\times_\Z X\).

\end{example} 

\begin{example}
\label{example:z2_actions_on_tori_geometric_cycles_2}
The intersection index of the \(2\)-dimensional geometric cocycles for a \(\Z^2\)-action 
on \(\T^n\) as in Example \ref{example:z2_action_and_some_geometric_cocycles} is 
given by the cardinality of the finite group \(U(\Z^n) /\Z^n\) with \(U\in M_n(\Z)\cap \GL_n(\Q)\) the integer 
matrix used to define the plane.

\end{example}

\begin{remark}
Given that \(\G\) is torsion-free, the
 long exact sequence of the fibration \( p\colon Z\times_\G X \to \G\backslash Z\) 
(with fibre \(X\)) gives that the inclusion of the fibre \(X\) in \(Z\times_\G X\) 
induces an isomorphism \( \pi_{d} (X) \to \pi_d (Z\times_\G X)\) on homotopy groups
as long as \(d \ge 2\), since \(\G \backslash Z\) is aspherical. 

This shows that 
for \(d>1\), one cannot achieve geometric cocycles with nonzero intersection 
indices by mapping spheres \(S^d\to Z\times_\G X\), since they factor through the 
fibre inclusion, and up to obvious homotopy one can alter one of any two fibre 
inclusions to make them have disjoint range. 

\end{remark}

The main point of Dirac classes for actions is the following result, which can be 
considered a kind of `black box' index theorem. It applies automatically every 
time one constructs a representative of the Dirac class. The result below is 
a special case of a more general one, Theorem \ref{theorem:blackbox_index}.

\begin{theorem}
\label{theorem:my_index_theorem}
If \( f\colon M \to Z\times_\G X\) is a \(d\)-dimensional geometric cocycle for 
\(\G\) acting on \(X\), then it's Dirac (analytic) 
index equals its (topological)  intersection index.

\end{theorem}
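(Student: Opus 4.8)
The plan is to unwind both indices to a Kasparov product in $\KK$ and then match them using the topological definition of the Dirac class together with the factorization of the localization map (Proposition \ref{proposition:factoring_Dirac}) and the composition-by-transversality recipe for correspondences. Concretely, a $d$-dimensional geometric cocycle $f\colon M\to Z\times_\G X$ determines a class $[M,f]\in\KK_n(\C, C(Z\times_\G X))$, and its Dirac index is the integer
\[
\langle \hat\mu([M,f]),\, [\widehat{\G\ltimes X}]\rangle
= \hat\mu([M,f])\otimes_{C_0(X)\rtimes\G}[\widehat{\G\ltimes X}]\ \in\ \Z.
\]
First I would use the explicit description of $\hat\mu$ (it is assembled from $\PD^{-1}$, the generalized Green--Julg isomorphism, and descent) to rewrite this pairing, before taking the index, as a pairing happening at the level of $\RKK^\G(Z;-,-)$ or, equivalently, of the proper groupoid $\Grd_\G=\G\ltimes Z$; the point is that $[\widehat{\G\ltimes X}]$ is \emph{defined} by the requirement that $\Dirac([\widehat{\G\ltimes X}])=[\ev]\otimes_\C[X]$, and by Proposition \ref{proposition:factoring_Dirac} this is the same as $\inflate([\widehat{\G\ltimes X}])=\PD^{-1}([\ev]\otimes_\C[X])$, a class I have already identified geometrically in Proposition \ref{proposition:description_of_inflated_class} and its corollary as the correspondence $\pnt\leftarrow X\xrightarrow{\ezx} Z\times_\G X$.

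The key step is then purely formal bookkeeping with correspondences: the analytic index is computed by composing, in the geometric category of smooth correspondences, the cocycle
\[
\pnt \leftarrow (M,\mathbf{1}) \xrightarrow{f} Z\times_\G X
\]
with the class of the fibre
\[
Z\times_\G X \xleftarrow{\ezx} X \rightarrow \pnt .
\]
Since both indices are bilinear and natural, and since the paring of $\hat\mu([M,f])$ with $[\widehat{\G\ltimes X}]$ is — after transporting through the duality isomorphisms — exactly the Kasparov product of these two correspondences, associativity of the Kasparov product reduces the theorem to the statement that this composite correspondence, evaluated as a class in $\KK_0(\C,\C)=\Z$, is the signed count of points in $f^{-1}(X_{z_0})$. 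After perturbing $f$ to be transverse to $\ezx$ (Sard), the Connes--Skandalis transversality recipe for composing correspondences produces the coincidence manifold $f^{-1}(X_{z_0})$, a finite $\K$-oriented $0$-manifold, whose class in $\KK_0(\C,\C)$ is by definition the algebraic number of its points — i.e. the intersection index. This is the content of Theorem \ref{theorem:my_index_theorem}.

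The main obstacle is the first bridge: carefully checking that the analytic Dirac index, which is literally a product $\hat\mu([M,f])\otimes_{C_0(X)\rtimes\G}[\widehat{\G\ltimes X}]$ of \emph{non-equivariant} $\KK$-classes over the crossed product, really does coincide with the \emph{groupoid-equivariant} product of the two correspondences above over $\Grd_\G$. This requires chasing the chain of isomorphisms — descent, the standard Morita equivalence $C_0(Z\times X)\rtimes\G\sim C(Z\times_\G X)$, generalized Green--Julg, and Kasparov--LeGall Poincar\'e duality for the proper groupoid $\Grd_\G$ — and verifying that each intertwines the relevant products, so that the pairing "upstairs" in $\RKK^\G(Z;-,-)$ descends correctly to the pairing "downstairs" over $C_0(X)\rtimes\G$. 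All of these compatibilities are established in \cite{EM:Dualities} and \cite{EM:Geometric_KK}, so the proof amounts to citing them in the right order and then invoking the transversality formula for the composite correspondence; I would not expect any genuinely new analytic input beyond what is already assembled in the earlier sections. A clean way to organize the write-up is to first prove the identity $\inflate([\widehat{\G\ltimes X}])=\PD^{-1}(\ezx_*[X]')$ (essentially done), then observe that pairing against $\hat\mu$ of a geometric cocycle is, via $\PD$ and Green--Julg, the composition of correspondences over $\Grd_\G$, and finally apply transversality.
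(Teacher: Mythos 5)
Your proposal is correct and follows essentially the same route as the paper: the heart in both cases is the adjunction-type identity $\langle \hat\mu([M,f]), [\widehat{\G\ltimes X}]\rangle = \langle [M,f], \Dirac([\widehat{\G\ltimes X}])\rangle$, obtained from the duality/Green--Julg description of the assembly map as Kasparov product with the Dirac morphism (the paper cites \cite{EM:Dualities}, Theorem 6.9, for exactly this), combined with the defining property $\Dirac([\widehat{\G\ltimes X}])=[\ev]\otimes_\C[X]$ and the Connes--Skandalis transversality computation of the pairing with the fibre class. The only cosmetic difference is that the paper first records the general statement (Theorem \ref{theorem:blackbox_index}, with an arbitrary $\K$-theory datum $\xi$) and deduces the intersection formula as a special case, whereas you argue the special case directly.
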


This result is a formal consequence of functorial properties of the 
Dirac method and is discussed following the examples below.

\subsection{Integer actions}
In the case of integer actions, say of \(\Z\) acting on \(X\) smoothly 
by a diffeomorphism \(\varphi \colon X  \to X\), the Dirac class 
\([\widehat{\Z\ltimes X}] \in \KK_{1-n} (C(X)\rtimes \Z, \C)\) 
always is non-vanishing and non-torsion in \(\K\)-homology. 
This follows from Theorem
 \ref{theorem:my_index_theorem} and the 
following construction of a \(1\)-cocycle. 

Choose a point \(x_0\) of \(X\) and 
let \(\gamma \colon [0,1]\to X\) be a smooth path from 
 \(x_0\) to \(\varphi (x_0)\) with \(\gamma'(0)\not= 0\).
  Let \(f\colon [0,1]\to \R\times_\Z X\) be the 
loop \(f(t) := [ \bigl(t, \gamma (t)\bigr)]\). It's intersection index is clearly \(+1\).

If the action is isometric, then we may represent the Dirac class by the  
spectral triple(s) described in Example \ref{example:zd_actions_on_tori}.

Suppose \(X = \T\) with \(\Z\) acting by irrational rotation. The Dirac class is 
represented by the deformed Dolbeault operator \(D_\theta\) acting on 
\(L^2(A_\theta)\), while \(\R\times_\Z \T \cong \T^2\) can be identified with 
the \(2\)-torus. If \(L_{p,q} \subset \R^2\) is a line through the origin of 
rational slope \(\frac{p}{q}\), then it projects to a loop 
\[ f_{p,q} \colon \T \to \T^2,\]
that is, a geometric \(1\)-cocycle, 
whose intersection index is \(= + q\). 

The analytic counterpart of the index theorem is as follows.

 Let 
\(L_\theta\) be a line in \(\T^2\) of slope \(\theta\). The real line acts by 
Kronecker flow on \(\T^2\) and the loops
\(f_{p,q}\) are transverse to the flow. Restricting the groupoid 
\(\R\times \R^2\) to the transversals \(f_{p,q}\) and identifying the 
transversals with \(\T\) yields, as one computes, the groupoid 
\( \Z\ltimes_{\theta'} \T\) of irrational rotation by \(\theta' := 
\frac{p\theta + q}{r\theta + s}\). Forgetting the left action of 
\(A_{\theta '}\) on these modules gives (since \(A_{\theta'}\) is 
unital and acts by compact operators) a family of 
\(\mathcal{E}_{p,q}\) of finitely generated projective 
modules over \(A_\theta\) (studied by Rieffel.) 

The Dirac index computes the 
Kasparov pairing 
\[ \langle [\mathcal{E}_{p,q}], [D_\theta]\rangle.\]
Intuitively, this is the index of the Dirac operator on \(\T^2_\theta\) `twisted by'
the `bundle' \(\mathcal{E}_{p,q}\), and by choosing a suitable connection one 
can represent it quite concretely as a deformed Doleault operator, acting 
on sections of the relevant bundle. The Dirac index (the Fredholm index of this 
operator) is thus \(+q\). 

\subsection{Vanishing of the Fredholm index}

The intersection index formula allows us to dispose rapidly of 
the problem of computing the ordinary Fredolm index of 
a Dirac class, for an action of \(\G\) on \(X\) \emph{compact}, that is, the 
pairing 
\[ \langle [1], [\widehat{\G\ltimes X}]\rangle \in \KK_{d-n}(\C, \C) = \Z,\]
which, of course, is only potentially nonzero when \(d-n = 0\) mod \(2\).

\begin{theorem}
\label{theorem:vanishing_index}
The Fredholm index of the Dirac class \([\widehat{\G\ltimes X}]\) of any 
action is zero. 

\end{theorem}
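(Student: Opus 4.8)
The plan is to obtain the vanishing directly from the intersection index theorem, Theorem~\ref{theorem:my_index_theorem} (in the general form of Theorem~\ref{theorem:blackbox_index}), by exhibiting a geometric representative of the unit \([1]\in\K_0(C(X)\rtimes\G)\) whose intersection with a fibre is empty. Recall that \(\G\) is torsion-free; since \(\langle[1],\fund\rangle\) lies in \(\KK_{d-n}(\C,\C)\) it vanishes automatically unless \(d\equiv n\pmod 2\), so we may assume \(d\equiv n\), and also \(d\ge 1\) (the only torsion-free \(\G\) with \(d=0\) is trivial), so that there is more than one \(\G\)-orbit on \(Z\). The three steps are: identify \([1]\) as the image, under the dualised assembly map \(\hat\mu\) of \eqref{equation:dualized_bc}, of the class of the fibre-inclusion cocycle; apply the index theorem; and observe that distinct fibres are disjoint.

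For the identification: the unit \([1_{C(X)\rtimes\G}]\) is the image of \([1_{C^*\G}]\in\K_0(C^*\G)\) under the homomorphism induced by the \(\G\)-equivariant unital inclusion \(\C\into C(X)\). By naturality of assembly for the inclusion \(\{e\}\into\G\), \([1_{C^*\G}]=\mu([\pnt])\), where \([\pnt]\in\K_0(\G\backslash Z)=\K_0(B\G)\) is the class of a point; this is represented by the cocycle \(\pnt\leftarrow\pnt\into\G\backslash Z\). Dually, the unital inclusion \(\C\into C(X)\) is the map \(X\to\pnt\), and under the Morita, generalised Green--Julg and Poincar\'e duality identifications built into \(\hat\mu\) it corresponds, on the level of correspondences, to the bundle projection \(p\colon Z\times_\G X\to\G\backslash Z\). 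Composing \(\pnt\leftarrow\pnt\into\G\backslash Z\) with \(p\) by transversality (a point is transverse to the submersion \(p\)) produces the coincidence manifold \(p^{-1}(\G z_0)=X_{z_0}\); hence
\[
[1_{C(X)\rtimes\G}]=\hat\mu\bigl([\,\pnt\leftarrow X\xrightarrow{\;\ezx\;}Z\times_\G X\,]\bigr).
\]
This is the \(\K\)-theoretic mirror of the computation of \(\inflate(\fund)\) in Proposition~\ref{proposition:description_of_inflated_class} and the corollary following it, and is consistent with Proposition~\ref{proposition:transversal_inthemappingcylinder}.

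With this in hand the conclusion is immediate. The class \([\ev]\), hence \(\fund\), is independent of the choice of basepoint orbit, so I may compute \(\langle[1],\fund\rangle\) using the cocycle carried by the fibre \(X_{z_1}\) over a \emph{different} orbit \(\G z_1\neq\G z_0\). By Theorem~\ref{theorem:my_index_theorem} (general form Theorem~\ref{theorem:blackbox_index}), \(\langle[1],\fund\rangle\) equals the intersection index of \(X\xrightarrow{\mathrm{i}_{z_1,X}}Z\times_\G X\) with the fibre \(X_{z_0}\); but \(X_{z_1}=p^{-1}(\G z_1)\) and \(X_{z_0}=p^{-1}(\G z_0)\) are disjoint, so the coincidence set is empty and the intersection index is \(0\). (Equivalently: both \([1]\) and \(\fund\) ``are'' the fibre, and the self-intersection of a fibre in \(Z\times_\G X\) vanishes because its normal bundle is the trivial pullback of \(T(\G\backslash Z)\) and \(d\ge1\).)

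The step I expect to require real care is the geometric identification of \([1_{C(X)\rtimes\G}]\): one must track the unit through descent, the Poincar\'e duality of Proposition~\ref{proposition:factoring_Dirac} and the \(\Grd_\G\)-equivariant one, and the generalised Green--Julg isomorphism, and verify the normalisation \(\mu([\pnt])=[1_{C^*\G}]\). Note that this does not invoke surjectivity of assembly — the fibre cocycle is written down by hand and one only checks where \(\hat\mu\) sends it — so the argument is valid for arbitrary (not merely amenable) actions.
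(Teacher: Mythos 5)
Your argument is correct and is essentially the paper's own proof: both lift the unit \([1_{C(X)\rtimes\G}]\) under the dualised assembly map to the fibre cocycle \(\pnt \leftarrow X \xrightarrow{\ezx} Z\times_\G X\) (via the unital inclusion \(\C \into C(X)\) and the fact that assembly sends the point/\([\ev]\) class to \([1_{C^*\G}]\)), and then apply the intersection index theorem after moving to a fibre over a different orbit, whose disjointness from \(X_{z_0}\) gives index zero. The only cosmetic differences are that you justify \(\mu([\pnt]) = [1_{C^*\G}]\) by naturality of assembly rather than by the paper's Lemma \ref{lemma:theorem_pairing_nonzero_4}, and that you make explicit the (harmless, and indeed necessary) exclusion of the degenerate case \(d=0\).
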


For example, the ordinary Fredolm index of the deformed Dolbeault 
operator \(D_\theta\) on \(\T^2_\theta\) is zero.

The proof is based on the following simple Lemma, whose 
proof already follows from the discussion in the proof of 
Proposition \ref{proposition:nonvanishing_of_Dirac_for_groups}.

\begin{lemma}
\label{lemma:theorem_pairing_nonzero_4}
\(\mu ([\ev]) = [1_{C^*\G}]\in \K_0(C^*\G)\) where 
\(1_{C^*\G}\) is the unit in \(C^*\G\). 
\end{lemma}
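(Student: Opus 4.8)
The plan is to prove the Lemma \(\mu([\ev]) = [1_{C^*\G}]\in \K_0(C^*\G)\) by unwinding the definition of the Baum--Connes assembly map in terms of the cut-off projection, exactly as in the proof of Proposition \ref{proposition:nonvanishing_of_Dirac_for_groups}. Recall that for \((\G,Z)\) a smooth \(\K\)-oriented group, \(\EG = Z\), and for any \(f\in\KK^\G_*(C_0(Z),B)\) one has \(\mu(f) = [P_\varphi]\otimes_{C_0(Z)\rtimes\G}\overline{f}\), where \(\varphi\in C_c(Z)\) is a cut-off function and \(P_\varphi = \sum_{g\in\G} g(\varphi)[g]\) is the associated projection in \(C_0(Z)\rtimes\G\). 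Applying this with \(B = \C\) and \(f = [\ev]\in\KK^\G_0(C_0(Z),\C)\), we get \(\mu([\ev]) = [P_\varphi]\otimes_{C_0(Z)\rtimes\G}\overline{[\ev]}\in \K_0(C^*\G)\), where \(\overline{[\ev]}\in\KK_0(C_0(Z)\rtimes\G, C^*\G)\) is the descent of \([\ev]\).

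First I would identify the descent of the class \([\ev]\). Recall \([\ev]\) is the class of the \(\G\)-equivariant \(*\)-homomorphism \(\ev\colon C_0(Z)\xrightarrow{\mathrm{restr}} C_0(\G x_0)\cong C_0(\G)\xrightarrow{\mathrm{mult}}\Comp(l^2\G)\). Applying descent to a \(*\)-homomorphism gives again (the class of) a \(*\)-homomorphism, namely \(\overline{\ev}\colon C_0(Z)\rtimes\G \to \Comp(l^2\G)\rtimes\G\). Since \(\Comp(l^2\G)\rtimes\G\) is Morita equivalent (indeed, via the canonical isomorphism \(\Comp(l^2\G)\rtimes\G\cong\Comp(l^2\G)\otimes C^*\G\) coming from the fact that the regular representation makes \(\Comp(l^2\G)\) an induced algebra) stably isomorphic to \(C^*\G\), the class \(\overline{[\ev]}\) in \(\KK_0(C_0(Z)\rtimes\G, C^*\G)\) is represented by this composite. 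Concretely, one factors \(\overline{\ev}\) through \(C_0(\G)\rtimes\G\cong\Comp(l^2\G)\), which is already taken care of in the discussion of Proposition \ref{proposition:nonvanishing_of_Dirac_for_groups}: the map \(C_0(Z)\rtimes\G\to C_0(\G)\rtimes\G = \Comp(l^2\G)\) sends \(P_\varphi\) to a projection homotopic through projections to a rank-one projection, and hence \(\langle[\ev],[P_\varphi]\rangle = 1\).

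Next I would compute the Kasparov product at the level of representatives. The point is to track where \(P_\varphi\) goes under \(\overline{\ev}\) when one does \emph{not} compose further with the trivial representation \(C^*\G\to\C\), but instead lands in \(\K_0(C^*\G)\). Under the \(*\)-homomorphism \(C_0(Z)\rtimes\G \to \Comp(l^2\G)\otimes C^*\G\), the projection \(P_\varphi = \sum_g g(\varphi)[g]\) maps to \(\sum_g \ev(g(\varphi))\otimes[g]\). Since \(\sum_g g(\varphi) = 1\) and \(\ev\) restricts functions to the orbit \(\G x_0\cong\G\) and then acts by multiplication operators, the operator \(\sum_g \ev(g(\varphi))\otimes[g]\) is precisely the image under \(\Comp(l^2\G)\otimes C^*\G\) of the standard rank-one-times-unit element: writing things out, the resulting projection in \(\Comp(l^2\G)\otimes C^*\G\) is Murray--von Neumann equivalent to \(e_{11}\otimes 1_{C^*\G}\), where \(e_{11}\) is a rank-one projection on \(l^2\G\). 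Under the Morita equivalence \(\Comp(l^2\G)\otimes C^*\G \sim C^*\G\), which sends \(e_{11}\otimes 1_{C^*\G}\) to \(1_{C^*\G}\), this gives exactly \([1_{C^*\G}]\in\K_0(C^*\G)\).

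The main obstacle is the bookkeeping in the last step: one must verify carefully that the projection \(\sum_g\ev(g(\varphi))\otimes[g]\) in \(\Comp(l^2\G)\otimes C^*\G\) is genuinely (homotopic through projections, or Murray--von Neumann equivalent) to \(e_{11}\otimes 1_{C^*\G}\), and that the Morita equivalence used to identify \(\K_0(\Comp(l^2\G)\otimes C^*\G)\cong\K_0(C^*\G)\) is the standard one compatible with the descent construction in Proposition \ref{proposition:nonvanishing_of_Dirac_for_groups}. This amounts to observing that the covariant pair defining \(\ev\) on \(C_0(Z)\rtimes\G\) realizes \(l^2\G\otimes C^*\G\) as a \(C_0(Z)\rtimes\G\)-\(C^*\G\) bimodule in which the image of \(P_\varphi\) acts as a projection onto a copy of \(C^*\G\); this is essentially the same computation that yields \(\langle[\ev],[P_\varphi]\rangle=1\) after applying the trivial representation, carried out one step earlier. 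Once this is in hand, the Lemma follows, and with it Theorem \ref{theorem:vanishing_index}: pairing \([1]\) with the Dirac class is \(\langle\mu([\ev]),\ldots\rangle\)-type data that factors through \(\mu([\ev]) = [1_{C^*\G}]\), whose image under the relevant map to \(\KK(\C,\C)\) vanishes by a parity/dimension argument already set up in the surrounding text.
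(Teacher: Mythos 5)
Your proposal is correct and follows essentially the same route as the paper, which proves the Lemma simply by pointing back to the discussion in the proof of Proposition \ref{proposition:nonvanishing_of_Dirac_for_groups}: unwind \(\mu([\ev]) = [P_\varphi]\otimes_{C_0(Z)\rtimes\G}\overline{[\ev]}\), descend \(\ev\) into \(\Comp(l^2\G)\rtimes\G\cong\Comp(l^2\G)\otimes C^*\G\), and identify the image of the cut-off projection with \(e_{11}\otimes 1_{C^*\G}\) up to equivalence, so that the Morita identification yields \([1_{C^*\G}]\) rather than merely the integer pairing. One small correction in the bookkeeping you flag: under the untwisting isomorphism the image of \(P_\varphi\) is \(\sum_g \ev(g(\varphi))\lambda_g\otimes[g]\) (the regular-representation twist in the first leg must be kept, since your literal elementary-tensor expression is not a projection), and conjugating by the Fell-absorption unitary \(\delta_g\otimes a\mapsto\delta_g\otimes[g]a\) turns it into \(P'\otimes 1\) with \(P'\) the rank-one-type projection already used in Proposition \ref{proposition:nonvanishing_of_Dirac_for_groups}, which completes the verification exactly as you intend.
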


\begin{proof} (of Theorem \ref{theorem:vanishing_index}). 
We lift the class \([1]\in \K_0\bigl( C_0(X)\rtimes \G\bigr)\) under \(\hat{\mu}\) to 
a geometric coycle with zero intersection index.

Let \(u \colon \C \to C(X)\) be the \(\G\)-equivariant
 inclusion, \(u\rtimes \G \colon C^*(\G) \to C(X)\rtimes \G\) the induced map.
 The diagram 
 \[ \xymatrix{  \KK_0^\G(C_0(Z), \C)\ar[d]^{u_*} \ar[r]^{\mu}  & \K_0(C^*\G\bigr)\ar[d]^{(u\rtimes \G)_* }\\ 
 \KK^\G_0\bigl( C_0(Z), C(X)\bigr) \ar[r]^{\mu_X} &\K_0( C(X)\rtimes \G) }\]
commutes and thus \(\mu_X \bigl( u_*([\ev]) \in \KK^\G_0\bigl( C_0(Z), C(X)\bigr) = [1_{C(X)\rtimes \G}]\) by 
the Lemma. 
To show this has zero pairing with the Dirac class, it suffices 
to compute the Poincar\'e dual of \(u_*([\ev]) \in \RKK^\G_{+d}(Z; \C, C(X)\bigr) \cong 
\K^{-d}(Z\times_\G X)\), and show that it's intersection index is zero. As the proof is 
similar to that of Proposition \ref{proposition:description_of_inflated_class}, we merely 
sketch it. The class \(u_*([\ev]\) is represented (analytically) by the \(\G\)-equivariant 
correspondence obtained by composing 
\[ Z \xleftarrow{\ez} \G \rightarrow \pnt \leftarrow X \xrightarrow{\id} X,\]
where \(\ez\) is the orbit map at \(z_0\). 

The composition gives 
\[ Z\xleftarrow{\ez\circ \pr_\G} \G\times X \xrightarrow{\pr_X} X\]
from \(Z\) to \(X\). Poincar\'e dualizing as in the proof of Proposition 
\ref{proposition:description_of_inflated_class} and taking \(\G\)-invariants 
gives the smooth correspondence 
\[ \pnt \leftarrow X  \xrightarrow{\ezx} Z\times_\G X.\]
Finally, replacing the point \(\G z_0\) to any different point 
\(\G z_1\), we obtain the equivalent (because the two points can be connected by a path) 
correspondence 
\[ \pnt \leftarrow X \xrightarrow{ \mathrm{e}_{z_1, X}} Z\times_\G X\]
and the map \(e_{z_1, X}\) now has disjoint image from the image of 
\(\ezx\). Hence the intersection index is zero as claimed.

\end{proof}

\subsection{Boundary actions of hyperbolic groups}

\begin{corollary}
Let \(\G\) be the fundamental group of a negatively curved odd-dimensional 
compact 
\(d\)-dimensional 
manifold \(M\) with universal cover \(Z\) and Gromov boundary \(\partial Z\cong \partial \G\). 
Then the boundary extension class
\[ [\partial_\G]\in \K^1(C(\partial \G)\rtimes \G)\] is a non-torsion, nonzero class in 
\(\K\)-homology, and the Dirac index of the \(d\)-dimensional geometric 
cocycle of Example \ref{example:geometric_intersection_index_vector_field} 
determined by a non-vanishing vector field on \(M\), 
is \(+1\).

\end{corollary}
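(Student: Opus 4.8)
The plan is to read the corollary off from the paper's two main theorems together with a one-line intersection computation. Under the stated hypotheses $\G=\pi_1(M)$ is torsion-free, $M$ is $\K$-oriented, and $\G$ is hyperbolic and hence has a dual-Dirac morphism, so a Dirac class for $\G\ltimes\partial Z$ exists and all hypotheses of Theorem~\ref{theorem:the_only_non_trivial_theorem_of_this_paper} and Theorem~\ref{theorem:my_index_theorem} are in force. The first of these identifies the boundary extension class $[\partial_\G]\in\K^1(C(\partial\G)\rtimes\G)$ with the Dirac class $[\widehat{\G\ltimes\partial Z}]$; the second computes the analytic Dirac index of any $d$-dimensional geometric cocycle for the boundary action as its topological intersection index. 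So it suffices to exhibit one geometric cocycle with intersection index $+1$: its Dirac index will then be $+1$, and since that index is the Kasparov pairing with $[\partial_\G]$, the class $[\partial_\G]$ can be neither zero nor torsion.

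First I would construct the cocycle. Since $M$ is closed and odd-dimensional, $\chi(M)=0$, so by Poincar\'e--Hopf there is a nowhere-vanishing vector field on $M$, equivalently a smooth section $\xi\colon M\to SM$ of the unit sphere bundle. Under the canonical diffeomorphism $Z\times_\G\partial Z\cong SM$ from Lemma~\ref{lemma:bundlesmooth} and the surrounding discussion, $\xi$ is exactly the $d$-dimensional geometric cocycle of Example~\ref{example:geometric_intersection_index_vector_field}, carrying the given $\K$-orientation on its domain $M$ and with $\xi$ itself $\K$-oriented by the two-out-of-three rule from the $\K$-orientations of $M$ and of $SM\cong Z\times_\G\partial Z$. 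Its intersection index is then immediate: the transversal $\ezx(\partial Z)\subset Z\times_\G\partial Z$ is, under that identification, the single fibre $S_{m_0}M$ of the bundle projection $p\colon SM\to M$ over $m_0=\G z_0$, and since $p\circ\xi=\id_M$ the section meets that fibre transversally in exactly one point, $\xi^{-1}(S_{m_0}M)=\{m_0\}$. A short orientation check fixes the sign of this $\K$-oriented coincidence point to $+1$, so the intersection index equals $+1$.

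Combining these, Theorem~\ref{theorem:my_index_theorem} gives that the Dirac index of this cocycle --- by definition the Kasparov pairing $\langle\hat{\mu}([M,\xi]),[\widehat{\G\ltimes\partial Z}]\rangle$ --- is $+1$, and by Theorem~\ref{theorem:the_only_non_trivial_theorem_of_this_paper} this equals $\langle\hat{\mu}([M,\xi]),[\partial_\G]\rangle=+1$. A nonzero pairing against a fixed $\K$-theory class forces $[\partial_\G]\ne0$ in $\K^1(C(\partial\G)\rtimes\G)$; and if $[\partial_\G]$ were torsion of order $k\ge1$, then $k$ times this pairing would vanish, contradicting $k\cdot1=k\ne0$. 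Hence $[\partial_\G]$ is nonzero and non-torsion, and the Dirac index of the vector-field cocycle of Example~\ref{example:geometric_intersection_index_vector_field} is $+1$, as claimed. The only genuinely delicate point --- and, since the two cited theorems carry the real weight, the main obstacle of the argument --- is the orientation bookkeeping that fixes the sign of the coincidence point to $+1$ rather than $-1$: one must check the mutual compatibility of the $\K$-orientations on $M$, the boundary $\K$-orientation on $\partial Z$, the $\K$-orientation on $SM\cong Z\times_\G\partial Z$, and the one induced on the wrong-way composite of correspondences. For the non-vanishing and non-torsion assertions this is immaterial, since any nonzero Dirac index suffices.
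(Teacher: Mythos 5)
Your argument is correct and is exactly the paper's intended route: identify $[\partial_\G]$ with the Dirac class via Theorem \ref{theorem:the_only_non_trivial_theorem_of_this_paper}, apply the intersection index formula of Theorem \ref{theorem:my_index_theorem} to the vector-field cocycle $\xi\colon M\to SM$ (which exists since $\chi(M)=0$ in odd dimensions), and note that a section meets a fibre transversally in one point, so the nonzero pairing rules out vanishing and torsion. The sign ambiguity you flag is also acknowledged in the paper's remark following the corollary (switching the $\K$-orientation on $M$ changes the index to $-1$), and as you say it is immaterial for the non-vanishing and non-torsion conclusions.
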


\begin{remark}
We have implicitly \(\K\)-oriented \(\xi \colon M \to SM\) by the separate 
\(\K\)-orientations on \(M\) and on \(SM\) given to us. One could also 
\(\K\)-orient it differently, by switching the \(\K\)-orientation on \(M\). 
This would result in an intersection index of \(-1\).

In the case of isometry groups of classical (say, real), hyperbolic space, where
\(M = \G\backslash Z\) where \(Z\) is classical hyperbolic space, the sphere bundle 
 \(SM\) identifies 
with \(G/\G\), where \(G\) is the full group of orientation-preserving isometries. If now 
 \(\G'\subset \G\) is a 
lattice of finite index, and \(\xi\colon M'\to SM' \cong G/\G'\) is a non-vanishing 
vector field, then 
\[ \pnt \leftarrow M'\xrightarrow{ \xi} G/\G'\to G/\G\]
also gives a geometric \(d\)-cocycle with intersection index \([\G;\G']\), as the 
reader can easily check. 
\end{remark}

Of course the existence of a non-vanishing vector field \(\xi\) in order to make a 
positive index is equivalent to vanishing of the Euler characteristic \(\chi(M)\). 
For surface groups \(\G = \pi_1(M^g)\), this is not the case, and in fact the 
boundary extension class, that is, the Dirac class, is torsion of order \(\chi(M^g) = 2-2g\). 

There is a very natural, geometric way of studying torsion in geometric 
\(\K\)-homology, however, and this gives some information on, for example, the 
torsion degree of, for instance, the boundary extension class. We give a 
brief description of this now.

Let \(k\) be any positive integer. 
The Bockstein sequence for \(\K\)-homology with \(\Z/k\)-coefficients is 
\begin{equation}
\label{equation:bockstein}
\cdots \K_*(SM) \xrightarrow{k\cdot}\K_*(SM) \to \K_*(SM)_{\Z/k} \xrightarrow{\delta} \K_{*-1} (SM) \to \cdots 
\end{equation}
where \(K_*(SM)_{\Z/k}\) is the \(\K\)-homology of \(SM\) with \(\Z/k\)-coefficients. 
The Bockstein sequence describes \(k\)-torsion in \(\K\)-homology.

R. Deeley in \cite{Dee1}\cite{ Dee2} describes this group using a modification of the 
usual Baum-Douglas 
style, using (\(\K\)-oriented) \(\Z/k\)-manifolds in the style of Sullivan, which map to 
\(SM\), and a vector bundle datum. With this description, the Bockstein connecting 
map in \eqref{equation:bockstein} 
has a simple geometric description in terms of
 Baum-Douglas cycles: if 
\(f\colon W \to SM\) is a map from a \(\K\)-oriented 
\(\Z/k\)-manifold \(W\) 
to \(SM\), then \(f\) restricts to a map \(\partial W \to SM\), giving an ordinary Baum-Douglas
cycle for \(SM\) and corresponding \(k\)-torsion class (since \(k\)-copies of it is, 
manifestly, a boundary). 
One deals with a bundle datum by restriction as well. 

Let now \(i \colon S^{d-1}\to SM\) be the inclusion of a fibre in \(\pi \colon SM \to M\), 
with class \(i_*([S^{d-1}])\in \K_{1-d}(SM)\). 
Choose a smooth vector field \(\xi \colon M \to TM\) transverse to the zero section.
The vector field has zeros \(p_1, p_2, \ldots\) and an index \(\pm 1\) at each of these 
zeros, and the sum of the indices is equal to \(k := \chi (M)\).

The Riemannian exponential map identifies the fibres \(S_{p_j}(M)\) 
with the boundaries \(\partial B_j\) of Riemannian balls around \(p_i\). Remove these
open balls from \(M\). This results in a manifold-with-boundary \(W'\). Using a 
parallel transport argument, for example, one can produce diffeomorphisms  
\[f_{ij} \colon \partial B_i \to \partial B_j\]
which are orientation-reversing if \(\xi\) has the same index at \(p_i\) and 
\(p_j\), and are orientation-preserving otherwise. 

Now identify any two \(\partial B_i\)'s, using the relevant diffeomorphism, if the 
vector field has the same index at each of their centres.  This results in a new manifold-with-boundary \(W\), whose 
remaining boundary components come with a collection of 
orientation-preserving diffeomorphisms between them. We thus have a 
\(\Z/k\)-manifold, where \(k\) is the Euler characteristic.
 Finally, the vector field \(\xi\) is non-vanishing 
on the complement of \(\cup_i B_i \), can be arranged compatible with the diffeomorphisms 
\(f_{ij}\), and hence determines a map \(\xi'\colon W\to SM\). 

Thus we obtain a cycle \( (W, \xi')\) for 
\(\K_0(M)_{\Z/k}\).  
\begin{theorem}
The Bockstein map \(\delta \colon \K_0(SM)_{\Z/k} \to \K_1(SM)\) maps 
the class of \(\xi'\colon W \to SM\) to the class \(i_*([S^{d-1}])\in \K_1(SM)\). 

In particular, the fibre class \(i_*([S^{d-1}]\), and hence the 
boundary extension class \([\partial_\G] \in \K^1(C(\partial \G)\rtimes \G)\), is \(\chi (M)\)-torsion in 
\(\K^1(C(\partial \G)\rtimes \G)\). 

\end{theorem}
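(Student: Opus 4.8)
The plan is to prove the Bockstein identity $\delta([W,\xi'])=i_*([S^{d-1}])$ first, and then read off the torsion statements for the fibre class and for $[\partial_\G]$ from it. For the Bockstein identity I would invoke Deeley's geometric model for $\K$-homology with $\Z/k$-coefficients and for its connecting map \cite{Dee1,Dee2}: with trivial bundle datum, a cycle for $\K_0(SM)_{\Z/k}$ is a $\K$-oriented $\Z/k$-manifold --- a compact $\K$-oriented manifold-with-boundary $W$ whose boundary is diffeomorphic to $k$ disjoint copies of a closed $\K$-oriented manifold $\beta W$ --- equipped with a continuous map to $SM$, and $\delta$ sends its class to the ordinary Baum--Douglas class $[\beta W,\,\textup{map}|_{\beta W}]\in\K_{*-1}(SM)$ of the singular stratum $\beta W$. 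The work is then to check that the pair $(W,\xi')$ built above, with $k=\chi(M)$, is exactly such a cycle: $W$ carries the $\K$-orientation inherited from $M$ (the gluing maps $f_{ij}$ being orientation-reversing on the boundary, which is what makes the glued $W$ oriented and leaves the $|\chi(M)|$ surviving boundary spheres identified compatibly), its boundary is $|\chi(M)|$ copies of the fibre sphere $\bd B\cong S^{d-1}$ via the Riemannian exponential map, and $\xi'=\xi/|\xi|$ is smooth on all of $W$ since $\xi$ is non-vanishing away from its zeros. Hence $\beta W=S^{d-1}$ and $\delta([W,\xi'])=[S^{d-1},\,\xi'|_{\bd B}]$.

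It then remains to recognize $\xi'|_{\bd B}\colon S^{d-1}\to SM$ as the fibre inclusion $i$. Near a zero $p$ of $\xi$, the normalized vector field, read off in the exponential chart and transported to the fibre $S_pM$, is a self-map of $S^{d-1}$ of degree equal to the transverse index of $p$, namely $\pm1$; composing with $S_pM=S^{d-1}\hookrightarrow SM$ shows $\xi'|_{\bd B}$ is homotopic to $i$ composed with a degree-$(\pm1)$ self-map of the sphere, so that as a Baum--Douglas cycle $[S^{d-1},\,\xi'|_{\bd B}]=\pm\,i_*([S^{d-1}])$; arranging all surviving zeros to have index $\operatorname{sign}\chi(M)$ (equivalently, absorbing this common sign into the $\K$-orientation of $\beta W$) removes the ambiguity and gives $\delta([W,\xi'])=i_*([S^{d-1}])$. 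Exactness of the Bockstein sequence \eqref{equation:bockstein} at $\K_1(SM)$ then shows that everything in the image of $\delta$ is annihilated by multiplication by $k$, so $\chi(M)\cdot i_*([S^{d-1}])=0$; concretely, $W$ itself is a null-bordism of $\chi(M)$ copies of the Baum--Douglas cycle $(S^{d-1},i)$.

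For the last clause I would transfer the torsion statement from $\K_1(SM)$ to $\K^1(C(\partial\G)\rtimes\G)$. By Theorem~\ref{theorem:the_only_non_trivial_theorem_of_this_paper} the boundary extension class equals the Dirac class, $[\partial_\G]=[\widehat{\G\ltimes\bZ}]$. Since the $\G$-action on $\bZ$ is amenable, the groupoid $\G\ltimes\bZ$ has $\gamma=1$, so by Remark~\ref{remark:inflationisomorphism} the inflation map
\[
\inflate\colon \KK^\G_*(C(\bZ),\C)\to\RKK^\G_*(Z;C(\bZ),\C)
\]
is an isomorphism; composing with Poincar\'e duality and the Morita equivalence, and using that $\G$ is torsion-free, the corollary following Proposition~\ref{proposition:description_of_inflated_class} identifies $\K^1(C(\bZ)\rtimes\G)$ with $\K_1(Z\times_\G\bZ)=\K_1(SM)$ in such a way that $[\partial_\G]$ corresponds precisely to the fibre class $i_*([S^{d-1}])$. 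An isomorphism of abelian groups preserves torsion order, so $[\partial_\G]$ is $\chi(M)$-torsion as claimed.

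I expect the main obstacle to be the bookkeeping in the second step above: checking that the cut-and-paste manifold $W$ genuinely assembles into a $\K$-oriented $\Z/k$-manifold in Deeley's precise sense --- in particular that the orientation-reversing-versus-preserving dichotomy for the $f_{ij}$ is arranged so that exactly $|\chi(M)|$ boundary spheres survive, all identified compatibly --- and matching the three orientation conventions in play (the $\K$-orientation on $M$, the orientation it induces on each surviving fibre sphere, and the ``fibre sphere'' $\K$-orientation implicit in $i_*([S^{d-1}])$). By contrast, the $\KK$-theoretic reduction of the last paragraph and the local degree computation are comparatively routine given the results already established in the paper.
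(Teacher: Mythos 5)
Your argument is correct and is essentially the proof the paper intends: the theorem is stated there without a written proof, the implicit one being exactly your chain --- Deeley's geometric description of the Bockstein map as restriction of the $\Z/k$-manifold cycle $(W,\xi')$ to its singular stratum, the local degree computation identifying that restriction with the fibre inclusion, exactness of the Bockstein sequence for the torsion of $i_*([S^{d-1}])$, and the transfer to $\K^1(C(\partial \G)\rtimes \G)$ using that $[\partial_\G]$ is the Dirac class and that the localization map is injective (an isomorphism) by amenability of the boundary action, which is a cleaner justification than your citation of the corollary on Poincar\'e duals but the same mechanism. One further point in your favour: your bookkeeping --- cancel \emph{opposite}-index zeros by orientation-reversing gluings, so that the $\lvert\chi(M)\rvert$ surviving boundary spheres all carry index $\operatorname{sign}\chi(M)$ and are identified by orientation-preserving diffeomorphisms --- is the version forced by requiring simultaneously that $W$ stay ($\K$-)oriented and that $\xi'$ descend to the glued manifold (the gluing map must have degree $\epsilon_i\epsilon_j$ and be orientation-reversing), and it corrects the wording of the construction paragraph preceding the theorem, which pairs same-index zeros.
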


\bigskip
\bigskip

We conclude with the proof of the intersection index formula, which is  
essentially a formal consequence of functoriality results in equivariant 
\(\KK\)-theory.

\begin{theorem}
\label{theorem:blackbox_index}
If \( \pnt \leftarrow  (M, \xi) \xrightarrow{f} Z\times_\G X\) is a smooth correspondence 
from a point to \(Z\times_\G X\), \(\Index (f!, \xi) \in \KK_*\bigl(\C, C_0(Z\times_\G X)\bigr)\) its
class, \([\ev] \otimes_\C [X]\in \KK_{-d} (Z\times_\G X, \C)\) the \(\K\)-homology 
class of a fibre of \(Z\times_\G X\to \G\backslash X\)  
then 
\[ \langle \hat{\mu} \bigl( \Index (f!, \xi)\bigr), [\widehat{\G\ltimes X}]\rangle
= \langle \Index (f!, \xi) , [\ev]\otimes_\C [X] \rangle,
\]
with in both cases the pairings between between Kasparov \(\K\)-theory and \(\K\)-homology. 

\end{theorem}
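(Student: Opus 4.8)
The plan is to reduce the asserted equality of two $\Z$-valued pairings to the \emph{defining} property of the Dirac class $\fund$, namely that $\Dirac(\fund) = [\ev]\otimes_\C[X]$ in $\KK^\G_{-n}(C_0(Z\times X),\C)$, together with the factorization of the localization map through Poincar\'e duality from Proposition \ref{proposition:factoring_Dirac}, the naturality of the inflation map, and associativity of the Kasparov product. The first step is to unwind the left-hand pairing. By definition $\hat\mu$ is the composition of $\PD^{-1}$ (from Proposition \ref{proposition:factoring_Dirac}), the generalized Green--Julg isomorphism, and the standard Morita equivalence; tracing through these, $\hat\mu\bigl(\Index(f!,\xi)\bigr)$ is exactly $\mu$ applied (after the identification $\KK_*(\C, C_0(Z\times X)\rtimes\G)\cong \RKK^\G_*(Z;\C,C_0(X))$) to a class whose $\PD$-image in $\KK^\G_*(C_0(Z),C_0(X))$ represents the geometric cocycle. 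So the left-hand pairing is an assembly-map pairing $\langle \mu(a), \fund\rangle$ for a suitable $a\in \KK^\G_{n-d}(C_0(Z),C_0(X))$, and this is the form I want to manipulate.

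Next I would write the pairing $\langle\mu(a),\fund\rangle$ as a product of three Kasparov elements over $C_0(X)\rtimes\G$: by the definition of the Baum--Connes assembly map $\mu(a) = [P_\varphi]\otimes_{C_0(Z)\rtimes\G}\overline a$ (as in the proof of Proposition \ref{proposition:nonvanishing_of_Dirac_for_groups}), so $\langle\mu(a),\fund\rangle = [P_\varphi]\otimes_{C_0(Z)\rtimes\G}\overline a\otimes_{C(X)\rtimes\G}\fund$. The plan is now to slide the Dirac class across: using that descent is a functor and that, after identifying $\KK^\G_*(C_0(Z\times X),\C)\cong \KK_*(C_0(Z\times X)\rtimes\G,\C)$, composition with $\overline{[Z]}$ realizes the localization map, one gets $\overline a\otimes_{C(X)\rtimes\G}\fund = $ (up to reindexing) the image of $a$ under a descended localization map applied to $\fund$, which by the defining property of $\fund$ equals $\overline{a}$ composed with $\overline{\,[\ev]\otimes_\C[X]\,}$. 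The upshot is the identity
\[
[P_\varphi]\otimes_{C_0(Z)\rtimes\G}\overline a\otimes_{C(X)\rtimes\G}\fund
= [P_\varphi]\otimes_{C_0(Z)\rtimes\G} \overline{a}\otimes_{C_0(Z\times X)\rtimes\G}\overline{([\ev]\otimes_\C[X])},
\]
and the right-hand side, by the definition of $\mu$ again, is precisely $\langle \mu(a), [\ev]\otimes_\C[X]\rangle$ — but this is $\langle \Index(f!,\xi), [\ev]\otimes_\C[X]\rangle$ once one untangles the $\hat\mu$/$\PD$ dictionary in the reverse direction (here one uses that $\PD$ is an isomorphism intertwining the two descriptions of the assembly-map domain, together with self-adjointness of the Kasparov pairing under $\PD$, i.e. that $\PD$-duality respects the pairing between $\K$-theory and $\K$-homology).

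The main obstacle I expect is \emph{bookkeeping the identifications}: there are at least three natural isomorphisms in play ($\PD^{-1}$, Green--Julg, Morita equivalence) each of which shifts degrees and each of which must be checked to be compatible with (a) descent, (b) the external products $\otimes_\C$, and (c) the Kasparov pairing, so that the reduction to the defining equation $\Dirac(\fund)=[\ev]\otimes_\C[X]$ genuinely goes through without a sign or a degree discrepancy. In particular the step where I claim ``composition with $\overline{[Z]}$ is the descended localization map'' uses exactly the identification from the Remark following Definition \ref{definition:Dirac_map} and from the proof of Proposition \ref{proposition:nonvanishing_of_Dirac_for_groups}, and I would cite those rather than re-derive them. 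Modulo this bookkeeping, every step is an instance of associativity of the Kasparov product or of functoriality of descent, so once the dictionary is fixed the proof is short; the reference to Theorem 4.34 of \cite{EM:Dualities} for the $\PD$-factorization does the real work.
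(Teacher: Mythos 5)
Your overall strategy is the same as the paper's: reduce the statement to an adjunction formula \(\langle\mu(\varphi),\psi\rangle=\langle\check{\varphi},\Dirac(\psi)\rangle\) and then feed in the defining property \(\Dirac(\fund)=[\ev]\otimes_\C[X]\). The steps you label as bookkeeping (descent functoriality, associativity, commuting \([Z]\) past the Dirac class) do go through. But there is a genuine gap at the one point where the argument has real content, and you have attributed the ``real work'' to the wrong result. After you slide the Dirac class across --- which, to typecheck, must be done by first rewriting \(\overline{a}\otimes_{C(X)\rtimes\G}\fund\) through the class \(\overline{w}\in\KK^\G_*\bigl(C_0(Z),C_0(Z\times X)\bigr)\) obtained by forgetting the \(Z\)-structure on the \(\RKK\)-lift of your cocycle (your displayed identity uses the same symbol \(\overline{a}\) with two different target algebras) --- you are left with \([P_\varphi]\otimes_{C_0(Z)\rtimes\G} j_\G(\overline{w})\otimes_{C_0(Z\times X)\rtimes\G}\overline{([\ev]\otimes_\C[X])}\), and the final ``untangling'' step requires that \([P_\varphi]\otimes j_\G(\overline{w})\), i.e.\ the assembly map with the proper coefficient algebra \(C_0(Z\times X)\) applied to \(\overline{w}\), coincide with the Green--Julg/Morita image of \(\Index(f!,\xi)\) in \(\K_*\bigl(C_0(Z\times X)\rtimes\G\bigr)\).

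That compatibility --- equivalently, that under \(\PD^{-1}\), Green--Julg and Morita the dualized assembly map \(\hat{\mu}\) becomes composition with the descended Dirac morphism \(j_\G\bigl([Z]\otimes 1_{C_0(X)}\bigr)\) --- is not an instance of associativity or of functoriality of descent, and it is not what Theorem 4.34 of \cite{EM:Dualities} provides: that theorem factors the \emph{localization} map on the equivariant \(\K\)-homology side (\(\Dirac=\PD\circ\inflate\)), whereas what you need is the dual statement about the \emph{assembly} map on the \(\K\)-theory side. It is exactly the content of Theorem 6.9 of \cite{EM:Dualities} (the Meyer--Nest style identification of assembly with the Dirac-morphism description under a suitable duality), and that citation is what the paper's proof actually rests on; once it is in place, your computation and the paper's Lemma are the same associativity argument. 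So the skeleton of your proof is right, but the phrase ``self-adjointness of the Kasparov pairing under \(\PD\)'' hides the one nontrivial input, which must either be cited (Theorem 6.9 of \cite{EM:Dualities}) or proved by hand by unwinding the generalized Green--Julg isomorphism against the cut-off projection \([P_\varphi]\).
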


\begin{proof}
The work of Meyer and Nest, part of which was extended in 
\cite{EM:Dualities},  on formalizing and abstracting the Dirac method, 
show that in a rather more general context,
 the Baum-Connes assembly map 
 \begin{equation}
 \label{equation:lesson_in_bc_1}
 \KK^\G_*( C_0(\EG), B) \xrightarrow{\mu} \KK_*(\C, B\rtimes \G)
 \end{equation}
agrees with the following map, supposing that one has a suitable 
dual (see \cite{EM:Dualities} Theorem 6.9 and environmental discussion). The 
dual involves various data, including a proper \(\G\)-C*-algebra \(\mathcal{P}\) which 
in the present case of interest is \(C_0(Z)\). Duality identifies the domain of 
\ref{equation:lesson_in_bc_1} with the group 
\[ \mathcal{R}\KK^\G_{*+d} \bigl(\EG; C_0(\EG), \mathcal{P}\otimes C_0(X)\bigr).\]
The generalized Green-Julg Theorem identifies this group with 
\[ \KK_{*+d}(\C, \mathcal{P}\otimes C_0(X) \; \rtimes \G).\]
Hence assembly is equivalent to a map 
\begin{equation}
\label{equation:lesson_in_bc_3}
 \KK(\C, \mathcal{P}\otimes C_0(X)\; \rtimes \G) \to \KK_{*-d}(\C, C_0(X)\rtimes \G).
 \end{equation}
 The map in question is induced by Kasparov product with with the Dirac 
 morphism \(D\in \KK_{-d}^\G(\mathcal{P}, \C)\). 
 
Translating this into the present context, where \(\mathcal{P} = C_0(Z)\), 
\(D  = [Z]\in \KK_{-d}^\G(C_0(Z), \C)\), gives the following Lemma, from which the 
Theorem follows immediately from putting \(\psi := [\widehat{\G\ltimes X}]\) to be
the Dirac class in equivariant theory. 

\begin{lemma}
Let \(\varphi \in \KK^\G_*\bigl(C_0(Z), C_0(X)\bigr)\) and 
\( \check{\varphi} \in \K^{-*-d}(Z\times_\G X) \) it's Poincar\'e dual. 
Let \(\psi \in \KK^\G_{-*}(C_0(X), \C)\) and \(\Dirac (\psi) \in 
\KK_{-*-d}( C_0(Z\times_\G X), \C) = \K_{*+d} (Z\times_\G X)\) it's image under the 
localization map. Then 
\begin{equation}
 \label{equation:lesson_in_bc_4}
 \langle \mu (\varphi), \psi\rangle  = \langle \check{\varphi} , \Dirac (\psi)\rangle \in \Z,
\end{equation}
where the pairing is that between \(\K\)-theory and \(\K\)-homology of \(Z\times_\G X\). 

\end{lemma}

This concludes the proof of the intersection index formula.

\end{proof}

Finally, we note that the more general statement of the Intersection 
Index Formula specializes to a topological formula for computing the boundary 
map of the boundary extension of a hyperbolic group. We record it here, by 
way of conclusion.

\begin{corollary}
The boundary map 
\[ \delta \colon \K_1(C(\partial Z)\rtimes \G) \to \Z\]
associated to the boundary extension admits the following topological 
description. Let
\[ \hat{\mu}\colon \K^{-*} (SM) \to \K_{*-d} (C(\partial Z)\rtimes \G)\]
be the Baum-Connes assembly map. Then if \( \pnt  \leftarrow  (W, \xi) \xrightarrow{f}  SM\) 
is a Baum-Douglas cocycle for \(SM\), with class \(\Index (f!, \xi)\), and 
\(f\colon W \to SM\) 
transverse to the fibre \(S_{z_0}M\), then 
\[(\delta \circ  \hat{\mu} )\bigl( \Index (f!, \xi)\bigr) =
 \Index ( [f^{-1}(S_{z_0} M)]\cdot \xi)\] -- the index of the Dirac operator on 
 the spin\(^c\)-manifold \(f^{-1}(S_{z_0}M)\), twisted by \(\xi\). 

\end{corollary}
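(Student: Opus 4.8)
The plan is to deduce the statement from three facts already in hand: the identification of the boundary extension class with the Dirac class of the action (Theorem~\ref{theorem:the_only_non_trivial_theorem_of_this_paper}), the general Intersection Index Formula (Theorem~\ref{theorem:blackbox_index}), and the topological composition-of-correspondences recipe of Section~\ref{subsec:topcorr}. Throughout, \(\G\) is torsion-free (the running assumption of this section), so that \(\G\backslash Z\cong M\) and \(Z\times_\G\bZ\cong SM\) are honest manifolds (Lemma~\ref{lemma:bundlesmooth} and the discussion around it).

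First I would recall that the connecting homomorphism of the six-term exact sequence attached to the boundary extension~\eqref{equation:boundary_extension} is, by the standard theory of \(\Ext\) and the Kasparov product, composition with the class of the extension; under the canonical identifications \(C_0(\G)\rtimes\G\cong\Comp(\ell^2\G)\) and \(\K_0(\Comp)\cong\K_0(\C)=\Z\) this says that \(\delta(a)=\langle a,[\bd_\G]\rangle\in\Z\) for \(a\in\K_1(C(\bZ)\rtimes\G)\), where \([\bd_\G]\in\KK_1(C(\bZ)\rtimes\G,\C)\) is the boundary extension class. By Theorem~\ref{theorem:the_only_non_trivial_theorem_of_this_paper}, \([\bd_\G]\) equals the Dirac class \([\widehat{\G\ltimes\bZ}]\) for the action of \(\G\) on \(X:=\bZ\), so \(\delta\circ\hat\mu=\langle\hat\mu(-),[\widehat{\G\ltimes\bZ}]\rangle\).

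Next I would apply Theorem~\ref{theorem:blackbox_index} with \(X=\bZ\) and \(\psi=[\widehat{\G\ltimes\bZ}]\) to the given cocycle \(\pnt\leftarrow(W,\xi)\xrightarrow{f}Z\times_\G\bZ=SM\), obtaining
\[ \langle\hat\mu(\Index(f!,\xi)),[\widehat{\G\ltimes\bZ}]\rangle=\langle\Index(f!,\xi),\,[\ev]\otimes_\C[\bZ]\rangle. \]
By the discussion around~\eqref{equation:bd_cycle_for_a_point} and Proposition~\ref{proposition:transversal_inthemappingcylinder}, the factor \([\ev]\otimes_\C[\bZ]\) is the \(\K\)-homology class of the Baum-Douglas cycle \(SM\xleftarrow{i}S_{z_0}M\to\pnt\), where \(i\) is the inclusion of the fibre over \(\G z_0\in M\) and \(S_{z_0}M\) carries its sphere \(\K\)-orientation. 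Hence the right-hand pairing is the class of the composite correspondence \(\pnt\leftarrow(W,\xi)\xrightarrow{f}SM\xleftarrow{i}S_{z_0}M\to\pnt\). Since \(f\) is transverse to \(S_{z_0}M\) by hypothesis, the transversality recipe of Section~\ref{subsec:topcorr} applies: the coincidence manifold is the closed spin\(^c\)-manifold \(f^{-1}(S_{z_0}M)\) with its induced \(\K\)-orientation and the restricted bundle datum \(\xi|_{f^{-1}(S_{z_0}M)}\), and the composite is \(\pnt\leftarrow(f^{-1}(S_{z_0}M),\xi|)\to\pnt\). By the definition of the wrong-way map of a submersion onto a point, the class of this correspondence in \(\KK_*(\C,\C)=\Z\) is exactly the index of the Dirac operator on \(f^{-1}(S_{z_0}M)\) twisted by \(\xi\). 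Chaining the three equalities gives \((\delta\circ\hat\mu)(\Index(f!,\xi))=\Index([f^{-1}(S_{z_0}M)]\cdot\xi)\), which is the assertion.

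The only step I expect to require genuine care is the first one: fixing the normalization so that the exponential (connecting) map of the extension is precisely \(\langle-,[\bd_\G]\rangle\), with the correct orientation of the Morita identification \(\K_0(C_0(\G)\rtimes\G)\cong\Z\), so that the right-hand side comes out as a genuine analytic index and not its negative. Once that convention is pinned down — and it is already forced by the conventions used in Theorem~\ref{theorem:the_only_non_trivial_theorem_of_this_paper} and in the definition of the Dirac index above — every remaining step is a formal consequence of results already proved, with no new analysis required.
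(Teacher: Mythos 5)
Your proposal is correct and follows the same route the paper intends: the corollary is recorded as a direct specialization of Theorem \ref{theorem:blackbox_index}, using Theorem \ref{theorem:the_only_non_trivial_theorem_of_this_paper} to identify the boundary extension class with the Dirac class of the boundary action, the identification of \([\ev]\otimes_\C[\bZ]\) with the Baum--Douglas class of the fibre \(S_{z_0}M\), and the transversality recipe for composing correspondences to evaluate the resulting pairing as the twisted Dirac index on \(f^{-1}(S_{z_0}M)\). Your filling-in of the standard fact that the connecting map \(\delta\) is Kasparov product with the (semi-split) extension class, under the Morita identification \(\K_0(C_0(\G)\rtimes\G)\cong\Z\), is exactly the implicit normalization the paper relies on.
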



\end{document}